\documentclass[reqno]{amsart}
\usepackage{graphicx} 
\newcommand{\kk} {\Bbbk} 
\newcommand{\fd} {\mathfrak{d}} 
\newcommand{\fD} {\mathfrak{D}} 

\usepackage{answers}
\usepackage{tikz}
\usepackage{tikz-cd}
\usepackage{graphicx}
\usepackage{mathrsfs}
\usepackage{enumitem}
\usepackage{multicol}
\usepackage{mathrsfs}
\usepackage{float}
\usepackage[
  hmarginratio={1:1},     
  vmarginratio={1:1},     
  textwidth=420pt,        
  heightrounded,          
]{geometry}
\usepackage{amsmath,amsthm,amssymb}
\usepackage{hyperref}
\usepackage{pdfsync}
\usepackage{helvet}
\usepackage{bm}
\usepackage{todonotes}
\usepackage{mathtools}
\usepackage{stmaryrd}
\usepackage{relsize}
\usepackage[intoc]{nomencl}

\usetikzlibrary{decorations.markings}
\usetikzlibrary{shapes.misc}
\tikzset{->-/.style={decoration={
  markings,
  mark=at position .5 with {\arrow{>}}},postaction={decorate}}}
  \tikzset{cross/.style={cross out, draw=black, minimum size=2*(#1-\pgflinewidth), inner sep=0pt, outer sep=0pt},
cross/.default={1pt}}

\theoremstyle{plain}
\newtheorem{thm}{Theorem}[section]
\newtheorem{cor}[thm]{Corollary}

\newtheorem*{notation*}{Notation}

\newtheorem{defn}[thm]{Definition}
\newtheorem{definition}[thm]{Definition}
\newtheorem{lemma}[thm]{Lemma}

\theoremstyle{plain}
\newtheorem{conj}[thm]{Conjecture}
\theoremstyle{definition}

\newtheorem{rmk}[thm]{Remark}

\newcommand{\Pic}{\text{Pic}}

\newcommand{\sD}{\mathscr{D}}

\def \Spec {\operatorname{Spec}}

\def \var {\vartheta}

\graphicspath{{images/}}

\title{Intrinsic mirror symmetry for orbifold cubic surfaces and the $A_1$ spherical DAHA}

\author[P.\,Bousseau]{Pierrick Bousseau}
\address{The Mathematical Institute, University of Oxford, Oxford, OX2 6GG, UK}
\email{pierrick.bousseau@maths.ox.ac.uk}

\author[S.\,Chattopadhyay]{Sayan Chattopadhyay}
\address{Department of Mathematics,
University of Georgia, Athens, GA 30605, USA}
\email{Sayan.Chattopadhyay@uga.edu}

\date{}
\numberwithin{equation}{section}
\begin{document}
	\begin{abstract}
		We determine the intrinsic mirror of an orbifold cubic surface with three nodes and interpret the resulting geometry through mirror symmetry between the $PGL(2,\mathbb{C})$ and $SL(2,\mathbb{C})$ character varieties of the once-punctured torus. We further construct a quantization of the mirror using higher genus Gromov--Witten theory and show that it recovers the $A_1$ spherical double affine Hecke algebra.
	\end{abstract}
\maketitle
\setcounter{tocdepth}{1}
\tableofcontents
\section{Introduction}

\subsection{Intrinsic mirror symmetry for orbifold cubic surfaces}

Let $\kk$ be an algebraically closed field of characteristic zero, and let $(Y,D)$ be a maximal log Calabi--Yau pair over $\kk$, namely a pair consisting of a smooth projective variety $Y$ over $\kk$ and a reduced anticanonical normal crossings divisor $D \subset Y$ which contains a zero-dimensional stratum. The intrinsic mirror construction of Gross--Hacking--Keel in dimension two \cite{GHK1} and of Gross--Siebert in higher dimension \cite{GScanonical, gross2019intrinsic} associates to $(Y,D)$ a commutative $\kk$-algebra $R_{(Y,D)}$, called the \emph{intrinsic mirror algebra}, and defined in terms of log Gromov--Witten invariants counting rational curves in $Y$ satisfying prescribed tangency conditions along $D$. The affine scheme
\[
\Spec R_{(Y,D)}
\]
is then a candidate for the total space of the mirror family of $(Y,D)$ in the various specific formulations of mirror symmetry. An explicit description of the algebra $R_{(Y,D)}$ requires a priori the computation of log Gromov--Witten invariants, a task that is generally highly nontrivial, and only completed in a few examples \cite{arguz_quartic, arguz_equations, HDTV,  B_explicit, GHK1, GHKScubic}.

The intrinsic mirror symmetry construction is expected to generalize to pairs $(Y,D)$ in which $Y$ is an orbifold, that is, a smooth Deligne--Mumford stack. Such a generalization has not yet been carried out, primarily because the requisite theory of orbifold logarithmic Gromov--Witten theory has not yet been fully developed. In dimension two, however, when the orbifold points are contained in $Y \setminus D$, the orbifold and logarithmic structures do not interact. In this setting, orbifold logarithmic Gromov--Witten theory can be defined in a straightforward manner; see \cite[\S 5.5]{GPS}. Consequently, the intrinsic mirror construction of \cite{GHK1} admits a natural generalization to this setting-- see \S \ref{Section 4} for details.

In this paper, we determine the intrinsic mirror algebra for a particular orbifold cubic surface. More precisely, we consider $Y$ to be 
a projective cubic surface with three nodes ($A_1$-singularities), regarded as an orbifold, and $D \subset Y$ to be a triangle of lines avoiding the nodes. 
As reviewed in \S \ref{Section 2}, the surface $Y$ can be realized as follows. Let $\overline{D}_1,\overline{D}_2,\overline{D}_3$ be three lines forming a triangle in $\mathbb{P}^2$, and choose non-collinear points $p_i\in \overline{D}_i$, for $i=1,2,3$, away from the vertices of the triangle. Then $Y$ is obtained by blowing up $\mathbb{P}^2$ along the three length-two zero-dimensional subschemes obtained by fattening each $p_i$ in the direction of $\overline{D}_i$, and $D=D_1+D_2+D_3$ is given by the strict transform of $\overline{D}_1+\overline{D}_2+\overline{D}_3$.
The exceptional curves $E_1$, $E_2$, $E_3$ are each lines passing through one node and have self-intersection $-\frac{1}{2}$. Finally, denote by $H$ the class of the strict transform of a line in $\mathbb{P}^2$, so that $D_i=H-2E_i$ for all $1 \leq i \leq 3$. The classes $H, E_1, E_2, E_3$ form a basis of the group of numerical equivalence curve classes on $Y$.

\begin{thm} \label{thm_main_intro_1}
   Let $(Y,D)$ be an orbifold cubic surface over $\kk$, where $Y$ is a projective cubic surface with three nodes and $D \subset Y$ is a triangle of lines avoiding the nodes. Let $\operatorname{NE(Y)}$ be the monoid of effective curve classes on $Y$ and let $\kk[\operatorname{NE(Y)}]$ be the corresponding monoid $\kk$-algebra. 
   The intrinsic mirror algebra 
   $R_{(Y,D)}$ of $(Y,D)$ is the quotient of the $\kk[\operatorname{NE(Y)}]$-algebra $\kk[\operatorname{NE(Y)}][\var_{v_1}, \var_{v_2}, \var_{v_3}]$ by the relation
    \[\var_{v_1} \var_{v_2} \var_{v_3} = t^{D_1} \var_{v_1}^2 +t^{D_2} \var_{v_2}^2 +t^{D_3} \var_{v_3}^2 + t^{H} + t^{2 D-H} - 2t^{D}\,,\]
    where $t^{C}$ denotes the monomial in
    $\kk[\operatorname{NE(Y)}]$ corresponding to a curve class $C \in \operatorname{NE(Y)}$.
\end{thm}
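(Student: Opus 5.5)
We follow the Gross--Hacking--Keel strategy for the smooth cubic surface \cite{GHKScubic}, adapted to the orbifold setting of \S\ref{Section 4}. The mirror algebra $R_{(Y,D)}$ is spanned as a $\kk[\operatorname{NE}(Y)]$-module by theta functions $\var_p$, indexed by integral points $p$ of the tropicalization $B$ of $(Y,D)$. Here $B$ is an integral affine manifold with singularity, built from three cones glued along rays $\rho_1,\rho_2,\rho_3$ with primitive generators $v_1,v_2,v_3$. Its structure constants are computed by broken lines in the canonical scattering diagram $\fD$ on $B$. The walls of $\fD$ carry generating series of orbifold log Gromov--Witten invariants of $\mathbb{A}^1$-curves.

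\textbf{Step 1: the toric model.} Using the realization of \S\ref{Section 2}, we blow up $\mathbb{P}^2$ at the three length-two subschemes. This gives a toric model in which $Y$ is the toric pair $(\mathbb{P}^2,\overline{D})$ with non-toric blowups replaced by ``orbifold blowups'' of multiplicity two on each $\overline{D}_i$. As in \cite[\S5]{GPS} and \cite[\S3]{GHK1}, the consistent scattering diagram on $B$ should then be obtained from the scattering diagram on $\mathbb{R}^2$ (the fan of $\mathbb{P}^2$). Its initial walls are supported on the rays of the fan, with wall functions coming from the fattened points. For a length-two subscheme supported at $p_i$ in the direction of $\overline{D}_i$, we expect the initial function to be $(1+t^{E_i}z^{-v_i})^2$ or $(1-t^{2E_i}z^{-2v_i})$-type; the choice is fixed by the local computation of the $\mathbb{A}^1$-curve counts near an $A_1$ point. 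The exceptional curve $E_i$ has $E_i^2=-\tfrac12$ and passes through the node, so its multiple covers contribute orbifold corrections. Concretely, I would compute the relative invariants of the local model, a neighborhood of $E_i$ in the orbifold, by degeneration to the deformation where the $A_1$ point smooths. I would first identify $(1+t^{E_i}z^{-v_i})^2$ as the naive answer and check it against the $2t^{D}$ term.

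\textbf{Step 2: products of theta functions.} The relation has degree three in the $\var$'s. I would therefore compute $\var_{v_1}\var_{v_2}\var_{v_3}$ by the broken-line formula: the coefficient of $\var_q$ in $\var_{v_1}\var_{v_2}\var_{v_3}$ is a sum over triples of broken lines with asymptotic directions $v_1,v_2,v_3$ ending at a generic point near $q$. We need only $q\in\{0,2v_1,2v_2,2v_3\}$ and possibly $v_i+v_j$. A degree/convexity argument, via the pairing with a strictly convex piecewise-linear function on $B$ as in \cite[Lemma 3.?]{GHKScubic}, shows that only finitely many $q$ and finitely many bends occur. Straight broken lines give $\var_{v_1}\var_{v_2}\var_{v_3}$'s leading terms $t^{D_i}\var_{v_i}^2$, after first checking $\var_{v_i}^2=\var_{2v_i}$ plus lower terms. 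Lines bending once or twice around the walls produce the constant term $t^H+t^{2D-H}-2t^D$. The class $H$ arises from a line meeting all three $D_i$. The class $2D-H$ arises from the conic class, and the $-2t^D$ from the order-two orbifold correction.

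\textbf{Step 3: no further relations.} By the general structure theory (theta functions form a basis, and $R$ is a flat deformation of the Stanley--Reisner ring of the cycle of three planes), the quotient of $\kk[\operatorname{NE}(Y)][\var_{v_1},\var_{v_2},\var_{v_3}]$ by one cubic relation has the correct fibre over $t=0$. This fibre is $\var_{v_1}\var_{v_2}\var_{v_3}=0$. Hence the quotient surjects onto $R_{(Y,D)}$ isomorphically, by a Nakayama/flatness argument.

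\textbf{Main obstacle.} The hard step is Step 1 combined with the constant term of Step 2: pinning down the orbifold wall functions at the nodes. In particular, we must justify the coefficient $-2t^D$, which depends on the orbifold multiple-cover contributions. I would first try to compute it independently using the known $PGL_2$/$SL_2$ symmetry. Alternatively, I would degenerate $Y$ to the smooth cubic surface and compare with \cite{GHKScubic}, specializing the curve classes $E_i\mapsto E_{i1}+E_{i2}$.
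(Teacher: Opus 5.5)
Your framework (theta functions, broken lines bounded by the convex function, and the flatness argument of Step 3, which the paper leaves implicit) is sound. The gap is that the one input that determines the relation is never computed, and what you propose for it is wrong. Everything hinges on the wall function $f_{\rho_1}$ of the canonical scattering diagram, and in particular on its $z^{-2v_1}$-coefficient: the constant term of $\var_{v_1+v_2}\var_{v_3}$ is $t^{D_1}$ times it. The paper computes $f_{\rho_1}$ in three steps.
\begin{enumerate}
\item An adjunction argument on the minimal resolution $Y''$ shows that the only integral curves with $\beta\cdot D_1=k\geq 1$ and $\beta\cdot D_2=\beta\cdot D_3=0$ are the two lines $E_1$ and $L_2^{v_1}=2H-E_1-2E_2-2E_3$ through the node $x_1$, the line $H-E_2-E_3$ through $x_2,x_3$, and curves of class $C^{v_1}=2H-2E_2-2E_3$. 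For general $Y$ these have pairwise distinct contact points, so only irreducible images occur.
\item In the relative orbifold theory of $\mathbb{P}^1$ with one $\mathbb{Z}/2\mathbb{Z}$-point, odd covers vanish and $2l$-fold covers give $(-1)^{l-1}/(2l^2)$. Hence each line through $x_1$ contributes exactly $1+t^{2L}z^{-2v_1}$.
\item The line through two nodes contributes nothing, and the smooth conic tangent to $D_1$ contributes $(1-t^{C^{v_1}}z^{-2v_1})^{-2}$.
\end{enumerate}
The needed coefficient is therefore $t^{2E_1}+t^{2L_2^{v_1}}+2t^{C^{v_1}}$. Neither of your candidates is right. $(1+t^{E_i}z^{-v_i})^2$ has a $z^{-v_i}$-term, contradicting the vanishing of odd covers, and such a term would create extra broken-line contributions, such as linear terms in $\var_{v_i}$ as for the smooth cubic. $1-t^{2E_i}z^{-2v_i}$ has the wrong sign and would produce $-t^H$. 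Moreover, even in a toric-model approach, the factors for $L_2^{v_1}$ and $C^{v_1}$ are not initial walls; they arise from scattering of the walls on $\rho_2,\rho_3$, which your plan does not address.

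Your fallbacks do not close this gap. Matching with the $PGL(2)$/$SL(2)$ character varieties is circular, since that identification is deduced from this theorem. Specializing the smooth-cubic answer of \cite{GHKScubic} via $E_{i1},E_{i2}\mapsto E_i$ reproduces exactly the naive $(1+t^{E_1}z^{-v_1})^2$. Orbifold invariants are not specializations of smooth ones: the true answer differs by sign twists such as $1+t^{2E_1}z^{-2v_1}=(1+\sqrt{-1}\,t^{E_1}z^{-v_1})(1-\sqrt{-1}\,t^{E_1}z^{-v_1})$, which you can only find by computing the orbifold invariants.

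Your bookkeeping of the constant term is also off. The terms $t^H=t^{D_1+2E_1}$ and $t^{2D-H}=t^{D_1+2L_2^{v_1}}$ are the orbifold double-cover contributions of the two lines through $x_1$, not a line meeting all $D_i$ or a conic. The term $-2t^D$ is not an orbifold correction at all. It is $+2t^D$ from the conic term (since $D_1+C^{v_1}=D$), combined with the constants in $\var_{v_i}^2=\var_{2v_i}+2t^{D_j+D_k}$, which enter when the $\var_{2v_i}$ are rewritten as $\var_{v_i}^2$. Those constants, which you leave as unspecified ``lower terms'', are exactly what fix the coefficient $-2$.
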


The intrinsic mirror family given by Theorem \ref{thm_main_intro_1} admits a particularly interesting one-parameter subfamily described as follows. 
The class $\sD = 2H-2E_1-2E_2-2E_3 \in NE(Y)$ satisfies $\sD \cdot D_i=0$ for all $1 \leq i \leq 3$, $\sD \cdot D=0$ and $\sD \cdot H=2$. Thus, restricting
the intrinsic mirror family to the one-dimensional torus 
\[ \mathbb{G}_m =\Spec \, \kk[\mathbb{Z}] \hookrightarrow \Spec\, \kk[NE(Y)] \] defined by the monoid homomorphism
$NE(Y) \rightarrow \mathbb{Z}$, 
$C \mapsto  \sD \cdot C$, we obtain 
\begin{equation} \label{eq_cubic}
\var_{v_1} \var_{v_2} \var_{v_3} = \var_{v_1}^2 +\var_{v_2}^2 + \var_{v_3}^2 + t^2 + t^{-2} - 2\,. \end{equation}

For smooth maximal log Calabi--Yau surfaces, a deformation quantization $\widehat{R}_{(Y,D)}$ of the intrinsic mirror algebra $R_{(Y,D)}$ was constructed in \cite{bousseau_mirror} using higher-genus log Gromov--Witten invariants with a top $\lambda$-class insertion. The resulting quantum mirror algebra is noncommutative and defined over $\kk_q :=\kk[q^{\pm \frac{1}{2}}]$, where the parameter $q$ is related to the deformation quantization parameter $\hbar$ by $q=e^{i\hbar}$.
The intrinsic mirror algebra $R_{(Y,D)}$ is recovered in the classical limit $\hbar\to 0$, or equivalently $q\to 1$. This construction extends naturally to orbifold log Calabi--Yau surfaces, and we determine the quantum mirror algebra associated with a cubic surface with three nodes:

\begin{thm} \label{thm_main_intro_2}
Let $(Y,D)$ be an orbifold cubic surface over $\kk$, where $Y$ is a projective cubic surface with three nodes, and $D \subset Y$ is a triangle of lines avoiding the nodes. 
The quantum intrinsic mirror algebra
$\widehat{R}_{(Y,D)}$
of $(Y,D)$ 
is the associative $\kk_q[\operatorname{NE}(Y)]$-algebra generated by three elements 
$\widehat{\vartheta}_{v_1}, \widehat{\vartheta}_{v_2}, \widehat{\vartheta}_{v_3}$ that satisfy the relations, 
	\begin{align*}
		q^{-\frac{1}{2}}\widehat{\vartheta}_{v_{1}}\widehat{\vartheta}_{v_{2}} - q^{\frac{1}{2}}\widehat{\vartheta}_{v_{2}}\widehat{\vartheta}_{v_{1}} & = \left(q^{-1} - q \right) \widehat{\vartheta}_{v_{3}}\,, \\
		q^{-\frac{1}{2}}\widehat{\vartheta}_{v_{2}}\widehat{\vartheta}_{v_{3}} - q^{\frac{1}{2}}\widehat{\vartheta}_{v_{3}}\widehat{\vartheta}_{v_{2}} & = \left(q^{-1} - q \right) \widehat{\vartheta}_{v_{1}}\,, \\
		q^{-\frac{1}{2}}\widehat{\vartheta}_{v_{3}}\widehat{\vartheta}_{v_{1}} - q^{\frac{1}{2}}\widehat{\vartheta}_{v_{1}}\widehat{\vartheta}_{v_{3}} & = \left(q^{-1} - q \right) \widehat{\vartheta}_{v_{2}}\,, \\
	\end{align*}
and
	\begin{align*}q^{-\frac{1}{2}}\widehat{\vartheta}_{v_1}\widehat{\vartheta}_{v_2}
    \widehat{\vartheta}_{v_3} = q^{-1} t^{D_1} \widehat{\vartheta}_{v_1}^2 + q t^{D_2} \widehat{\vartheta}_{v_2}^2 + q^{-1} t^{D_3} \widehat{\vartheta}_{v_3}^2 & + t^{H} + t^{2D-H} -(q+q^{-1}) t^{D} \,.\end{align*}
\end{thm}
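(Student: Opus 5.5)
We compute the quantum mirror algebra by following the quantum analogue of the Gross--Hacking--Keel strategy for cubic surfaces: first build the canonical quantum scattering diagram of $(Y,D)$, then use quantum broken lines to compute the products of quantum theta functions, and finally check that these products generate $\widehat{R}_{(Y,D)}$ with exactly the stated relations.

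\emph{Step 1: toric model and scattering diagram.} We use the toric model from \S\ref{Section 2}. Here $(\overline{Y},\overline{D})=(\mathbb{P}^2,\overline{D}_1+\overline{D}_2+\overline{D}_3)$, and $Y$ is obtained by blowing up three length-two subschemes, one on each boundary line. The tropicalization $B$ is the fan of $\mathbb{P}^2$ with rays $\rho_i$ spanned by $v_1,v_2,v_3$ satisfying $v_1+v_2+v_3=0$, modified at the vertices by the affine monodromy. Each length-two blowup is the orbifold analogue of two infinitely near points. By the orbifold version of the Gross--Pandharipande--Siebert degeneration formula (\cite[\S 5.5]{GPS}), it contributes an initial quantum wall on $\rho_i$ with function
\[
\big(1+q^{\frac12}t^{E_i}z^{-v_i}\big)\big(1+q^{-\frac12}t^{E_i}z^{-v_i}\big),
\]
or the equivalent orbifold modification that accounts for the $-\tfrac12$ self-intersection. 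We then compute the consistent completion of this quantum scattering diagram. We expect it to be finite, or at least effectively finite for the broken lines we need, because the $v_i$ generate a lattice on which three walls meet pairwise like the $A_1^{(1)}$-type cluster/DAHA picture. Concretely, we pass to a degeneration in which all three walls lie in a single chart and apply the quantum Kontsevich--Soibelman lemma. The classical limit of this diagram has to reproduce the diagram used for Theorem~\ref{thm_main_intro_1}, and this gives a consistency check on the result.

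\emph{Step 2: products via broken lines.} Following \cite{bousseau_mirror}, the structure constants are
\[
\widehat\vartheta_{p}\widehat\vartheta_{r}=\sum_s C^s_{p,r}(q)\,\widehat\vartheta_s,
\]
where each $C^s_{p,r}(q)$ is a sum over pairs of quantum broken lines with asymptotic directions $p,r$ ending at a generic point near $s$. Each pair is weighted by $q^{\frac12\langle\cdot,\cdot\rangle}$ twisted factors. We evaluate $\widehat\vartheta_{v_1}\widehat\vartheta_{v_2}$ and $\widehat\vartheta_{v_2}\widehat\vartheta_{v_1}$, choosing the endpoint in the cone $\sigma_{v_1,v_2}$ spanned by $v_1,v_2$. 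There are two contributions. The straight lines give $z^{v_1+v_2}$, which is $\widehat\vartheta_{v_1+v_2}$ with commutation factors $q^{\pm\frac12}$. The other contribution is a single bend across the wall $\rho_3$ that produces $\widehat\vartheta_{v_3}$ up to a power of $t^{D}$ absorbed into the normalization. In $q^{-\frac12}\widehat\vartheta_{v_1}\widehat\vartheta_{v_2}-q^{\frac12}\widehat\vartheta_{v_2}\widehat\vartheta_{v_1}$ the $\widehat\vartheta_{v_1+v_2}$ terms cancel, leaving $(q^{-1}-q)\widehat\vartheta_{v_3}$. The other two commutation relations follow by the cyclic symmetry of $(Y,D)$.

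For the cubic relation, we expand $\widehat\vartheta_{v_1}\widehat\vartheta_{v_2}\widehat\vartheta_{v_3}$ in the theta basis. We write $\widehat\vartheta_{v_1}\widehat\vartheta_{v_2}=q^{\frac12}\widehat\vartheta_{v_1+v_2}+(\cdots)\widehat\vartheta_{v_3}$ and then multiply by $\widehat\vartheta_{v_3}$. This requires computing $\widehat\vartheta_{v_3}^2$ and $\widehat\vartheta_{v_1+v_2}\widehat\vartheta_{v_3}$ with broken lines ending near the origin, that is, the coefficient of $\widehat\vartheta_0=1$. The constant terms come from broken lines bending around the whole fan, and they produce $t^H+t^{2D-H}-(q+q^{-1})t^D$. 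The term $-(q+q^{-1})t^D$ is the quantization of $-2t^D$ and comes from the two $q$-shifted factors of each initial wall. We then identify the remaining $\widehat\vartheta_{2v_i}$ terms with $\widehat\vartheta_{v_i}^2$ minus constants, which yields the coefficients $q^{-1}t^{D_1}$, $q\,t^{D_2}$ and $q^{-1}t^{D_3}$. The asymmetry between these coefficients comes from the chosen ordering $v_1v_2v_3$.

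\emph{Step 3: presentation.} Theta functions form a $\kk_q[\operatorname{NE}(Y)]$-basis, and $B$ has three rays. So, just as classically, every $\widehat\vartheta_{av_i+bv_{i+1}}$ is a polynomial in the generators. By the commutation relations, the algebra defined by the stated relations is spanned by ordered monomials $\widehat\vartheta_{v_1}^a\widehat\vartheta_{v_2}^b\widehat\vartheta_{v_3}^c$ with $\min(a,b,c)=0$. The surjection onto $\widehat{R}_{(Y,D)}$ is therefore an isomorphism by a leading-term comparison in $q$-filtered degree, combined with flatness over $\kk_q$ and the classical limit given by Theorem~\ref{thm_main_intro_1}.

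The main obstacle is Step 1, together with the constant-term computation in Step 2. We need to justify the orbifold initial walls and control the quantum consistent completion, whose walls could be infinite in number and carry $q$-dependent functions. We would first try the quantum tropical vertex for two infinitely near points and deduce the orbifold case by a deformation of the $A_1$ singularity to two $(-1)$-curves, checking that the resulting walls are insensitive to it. Only finitely many walls can affect broken lines of the low degrees we need.
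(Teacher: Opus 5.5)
\textbf{The gap is in Step 1, and Step 1 carries essentially all the content of the theorem.} You never determine the quantum canonical scattering diagram, and the one concrete wall you propose is wrong.

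A line through a node $x_i$ meeting $D_i$ transversally is an orbifold $\mathbb{P}^1$ with one $\mathbb{Z}/2$ point. The computation in the very reference you cite (GPS, Prop.~5.7), together with its higher-genus analogue, gives:
\begin{itemize}
\item vanishing invariants for odd multiples $kv_i$;
\item $\sum_g N^{kE_i}_{g,kv_i}\hbar^{2g-1}=\frac{(-1)^{l-1}}{l}\frac{1}{2\sin(l\hbar)}$ for $k=2l$.
\end{itemize}
The resulting factor is $1+t^{2E_i}z^{-2v_i}$, with no $z^{-v_i}$ term and no $q$-dependence.

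Your factor $(1+q^{1/2}t^{E_i}z^{-v_i})(1+q^{-1/2}t^{E_i}z^{-v_i})$ is, classically, $(1+t^{E_i}z^{-v_i})^2$. That is what the two lines into which $E_i$ splits when the node is smoothed would contribute, after identifying their classes. So the check you propose, that the walls are insensitive to trading the $A_1$ point for two $(-1)$-curves, fails. The walls do change. At best one has a nontrivial specialization such as $(1+it^{E_i}z^{-v_i})(1-it^{E_i}z^{-v_i})=1+t^{2E_i}z^{-2v_i}$, and that would need its own proof.

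Odd powers in the ray functions are not harmless. They allow bends by $v_i$ in the broken-line counts, which produce a constant term in $\widehat\vartheta_{v_1}\widehat\vartheta_{v_2}$ and hence terms linear in the $\widehat\vartheta_{v_i}$ in the cubic relation. Such terms are present for smooth cubic surfaces and absent from the statement.

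Two further points about Step 1:
\begin{itemize}
\item The consistent diagram is not finite: every primitive direction carries a nontrivial ray. Only the bound $F(r)\le F(p_1)+F(p_2)$, together with the monotonicity of $dF(\gamma')$ along broken lines, reduces the needed input to the rays on $\rho_1,\rho_2,\rho_3$.
\item A comparison between the canonical diagram and a toric-model diagram would itself have to be established for these orbifold blow-ups.
\end{itemize}

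\textbf{What the paper does instead.} It bypasses toric models and computes $\widehat f_{\rho_1}$ directly from the definition.
\begin{itemize}
\item An adjunction argument on the minimal resolution shows that the only classes with $\beta\cdot D_1=k$ and $\beta\cdot D_2=\beta\cdot D_3=0$ are $E_1$, $2H-E_1-2E_2-2E_3$ and $H-E_2-E_3$ when $k=1$, and $C^{v_1}=2H-2E_2-2E_3$ when $k=2$.
\item For general $Y$, every contributing map has irreducible image.
\item The two lines through $x_1$ give factors $1+t^{2L_j^{v_1}}z^{-2v_1}$.
\item Multiples of $H-E_2-E_3$ (the line through $x_2,x_3$ and the smooth conic tangent to $D_1$) contribute only in even degree. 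Their total factor is $\bigl((1-qt^{C^{v_1}}z^{-2v_1})(1-q^{-1}t^{C^{v_1}}z^{-2v_1})\bigr)^{-1}$, obtained from the two-node scattering diagram via the Kronecker quiver DT invariant.
\end{itemize}
This is exactly what the constant term requires. $t^{D_1}$ times the $z^{-2v_1}$ coefficient $t^{2L_1^{v_1}}+t^{2L_2^{v_1}}+(q+q^{-1})t^{C^{v_1}}$ equals $t^H+t^{2D-H}+(q+q^{-1})t^D$.

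So your attribution of $-(q+q^{-1})t^D$ to ``the $q$-shifted factors of each initial wall'' is wrong. The factor $q+q^{-1}$ comes from the conic, which is not an initial wall at all. The sign becomes negative only after substituting $\widehat\vartheta_{2v_i}=\widehat\vartheta_{v_i}^2-2t^{D_j+D_k}$, which gives $(q+q^{-1})t^D-2q^{-1}t^D-2qt^D=-(q+q^{-1})t^D$.

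\textbf{Smaller points.} In $\widehat\vartheta_{v_1}\widehat\vartheta_{v_2}$, the $\widehat\vartheta_{v_3}$ term comes from a broken line that crosses the boundary rays without bending, with coefficient exactly $q^{-1/2}t^{D_3}$. Theta functions admit no renormalization, so this factor cannot be ``absorbed into the normalization.'' The commutator is honestly $(q^{-1}-q)t^{D_3}\widehat\vartheta_{v_3}$, and $t^{D_3}$ becomes $1$ only after the base change $t^{C}\mapsto t^{\sD\cdot C}$. Your Step 3, proving that the relations are complete, goes beyond what the paper argues and is fine once Steps 1 and 2 are repaired.
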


For a smooth cubic surface with boundary given by a triangle of lines, the intrinsic mirror algebra was determined by Gross--Hacking--Keel--Siebert in~\cite{GHKScubic}, while its quantum counterpart was computed by the first author in~\cite{bousseau_skein}. Our proof follows the same general strategy. The main new feature is the presence of orbifold points, which requires adapting the computation of the relevant log Gromov--Witten invariants to the orbifold setting.

\subsection{Mirror symmetry for character varieties}

A major motivation for studying mirror symmetry in the case of orbifold cubic surfaces with three nodes is that it provides an explicit example of mirror symmetry for character varieties.

For integers $g,n \geq 0$, denote by $\mathbb{S}_{g,n}$ the compact, connected, oriented topological surface of genus $g$ with $n$ punctures. For any  connected reductive algebraic group $G$ over $\kk$, the $G$-character variety of $\mathbb{S}_{g,n}$ is the affine variety over $\kk$ defined by the affine GIT quotient   \[\operatorname{Ch}_{g,n}(G) = \operatorname{Hom}\left(\pi_{1}(\mathbb{S}_{g,n}), G\right)/\!/G\,,\]
where $G$ acts by conjugation on the affine variety $\operatorname{Hom}\left(\pi_{1}(\mathbb{S}_{g,n}), G\right)$ of representations of the fundamental group 
$\pi_1(\mathbb{S}_{g,n})$ into $G$.
Character varieties are objects that lie in the intersection of low-dimensional topology, algebraic geometry, and representation theory -- see \cite{lub, sik} for references and background. 
The relative character varieties 
$\operatorname{Ch}_{g,n}^{c}(G)$ are the fibers $(\pi_{g,n}^G)^{-1}(c)$ of the map \[\pi_{g,n}^G : \operatorname{Ch}_{g,n}(G) \to \left(G/\!/G\right)^n\]
that sends a representation to the conjugacy classes associated to the small oriented loops around the punctures. 

Mirror symmetry is expected to relate character varieties associated with a pair of Langlands dual groups $G$ and $G^\vee$. For $\kk=\mathbb{C}$,  these character varieties are homeomorphic via non-abelian Hodge theory to coarse moduli spaces of semistable Higgs bundles,
whose Hitchin fibrations form a pair of dual SYZ fibrations 
\cite{donagi_pantev, hausel}. 
It is natural to expect a version of mirror symmetry for character varieties formulated in terms of the intrinsic mirror construction. 
The relative character varieties are conjectured to admit log Calabi--Yau compactifications $(Y,D)$ -- see \cite{arguz2026logcalabiyaucompactificationssl2mathbbc, kim2026compactification, MR4157427} for results in this direction. To formulate the following conjecture, we assume that an appropriate version of the intrinsic mirror construction is available for the pair \((Y,D)\). This is certainly the case when \((Y,D)\) is a normal crossings pair. In general, however, \((Y,D)\) may be singular, in which case the intrinsic mirror construction does not apply directly.

\begin{conj} \label{conj_intro}
Let \(G\) be a connected reductive algebraic group over \(\kk\), and let \(G^\vee\) denote its Langlands dual. For a general point \(c\in (G/\!/G)^n\), let
$(Y,D)$
be a log Calabi--Yau compactification of the relative character variety
$\operatorname{Ch}^{c}_{g,n}(G)$.
After a suitable base change, the intrinsic mirror family associated with \((Y,D)\) is isomorphic to the family
$\pi_{g,n}^{G^\vee}\colon
\operatorname{Ch}_{g,n}(G^\vee)
\longrightarrow
(G^\vee/\!/G^\vee)^n
$ of relative $G^\vee$-character varieties.
\end{conj}

Conjecture \ref{conj_intro} follows for $G=PGL(2)$, $G^\vee=SL(2)$ and $(g,n) = (0,4)$ from \cite{bousseau_skein, GHKScubic}. 
A topological version of mirror symmetry in this case is also proved in \cite{mauri}, and a version of homological mirror symmetry is established in \cite{beimler2026mirror}.

We now explain how Theorem \ref{thm_main_intro_1} implies Conjecture \ref{conj_intro} for
$G=PGL(2)$, $G^\vee=SL(2)$ and $(g,n) = (1,1)$, that is, when the surface $\mathbb{S}_{g,n}$ is a once-punctured torus. The trace of $2 \times 2$ matrices induces an isomorphism $SL(2)/\!/SL(2) \simeq \mathbb{A}^1$. As reviewed in \cite{goldman2011affinecubicsurfacesrelative}, for every $\tau \in \mathbb{A}^1$, the relative character variety $\mathrm{Ch}^{ \tau}_{1,1}(SL(2))$ is given by the affine cubic surface in $\mathbb{A}^3$ with equation
\begin{equation} \label{eq_character_variety}
xyz=x^2+y^2+z^2-\tau -2 \,.\end{equation}
Indeed, writing $\pi_1(\Bbb{S}_{1,1}) = \langle \alpha,\beta, \mathfrak{r} \ \vert \ \alpha\beta\alpha^{-1}\beta^{-1} = \mathfrak{r} \rangle$, equation \eqref{eq_character_variety} is obtained by setting  
\[x = \operatorname{Tr}(\rho(\alpha)), \ \ y = \operatorname{Tr}(\rho(\beta)), \ \ z = \operatorname{Tr}(\rho(\alpha\beta)), \ \  \tau = \operatorname{Tr}(\rho(\mathfrak{r}))\]
for $\rho : \pi_{1}(\Bbb{S}_{1,1}) \to SL(2)$. In particular, $\mathrm{Ch}^{\tau}_{1,1}(SL(2))$ is smooth if and only if $\tau \neq \pm 2$. For $\tau=2$, it is the Cayley cubic, which has four nodes, whereas for $\tau=-2$, it is the Markoff cubic, which has a unique node.
On the other hand, since $PGL(2)=SL(2)/\{ \pm \mathrm{Id}\}$, we have 
\[\mathrm{Ch}^{\tau}_{1,1}(PGL(2))=\mathrm{Ch}^{\tau}_{1,1}(SL(2))/H^1(\mathbb{S}_{1,1}, \mathbb{Z}/2\mathbb{Z}) = \mathrm{Ch}^{\tau}_{1,1}(SL(2))/(\mathbb{Z}/2 \mathbb{Z})^2 \,,  \]
where the non-trivial elements of 
$(\mathbb{Z}/2 \mathbb{Z})^2$ act on the cubic surface \eqref{eq_character_variety} by 
\begin{align*}
    (x,y,z) & \mapsto (-x,-y,z) \,,\\
    (x,y,z) & \mapsto (-x,y,-z)  \,,\\ 
    (x,y,z) & \mapsto (x,-y,-z) \,.
\end{align*}
The invariant subalgebra is generated by $u = x^2$, $v=y^2$, $w = z^2$, and so the relative character variety 
$\mathrm{Ch}^{\tau}_{1,1}(PGL(2))$ is the affine cubic surface in $\mathbb{A}^3$ with equation 
\begin{equation}\label{1}uvw = (u + v + w -
\tau -2)^2 \,.\end{equation}
In particular, $\mathrm{Ch}^{\tau}_{1,1}(PGL(2))$ has three nodes for $\tau \neq \pm 2$, four nodes for $\tau=2$, and a unique $D_4$ singularity for $\tau=-2$. 
For general $\tau$, $\mathrm{Ch}^{\tau}_{1,1}(PGL(2))=Y \setminus D$, where $Y$ is a projective cubic surface with three nodes, and $D \subset Y$ a triangle of lines avoiding the nodes. Therefore, comparing \eqref{eq_cubic} with \eqref{eq_character_variety}, we obtain the following corollary of Theorem \ref{thm_main_intro_1}:

\begin{cor}
After the change of variables 
 $\tau=-t^2-t^{-2}$, the 
family \eqref{eq_character_variety} of relative $SL(2)$-character varieties 
$\mathrm{Ch}^{\tau}_{1,1}(SL(2))$ coincides with the base change \eqref{eq_cubic} of the intrinsic mirror family to $(Y, D)$, where $Y$ is a projective cubic surface with three nodes and $D \subset Y$ a triangle of lines avoiding the nodes. In particular, Conjecture \ref{conj_intro} holds for $G=PGL(2)$, $G^\vee=SL(2)$ and $(g,n) = (1,1)$. 
\end{cor}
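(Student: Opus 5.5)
The corollary is a direct comparison of two explicit families, using Theorem~\ref{thm_main_intro_1} and its specialization \eqref{eq_cubic}. First I restrict the intrinsic mirror family of Theorem~\ref{thm_main_intro_1} to the one-parameter torus $\mathbb{G}_m \hookrightarrow \Spec\,\kk[NE(Y)]$ given by $C \mapsto \sD\cdot C$.

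To check that this really yields \eqref{eq_cubic}, I compute the intersection numbers from the intersection form: $H^2=1$, $H\cdot E_i=0$, $E_i\cdot E_j=-\tfrac12\delta_{ij}$. This gives $\sD\cdot H = 2$ and $\sD\cdot E_i = 1$. Hence
\[
\sD\cdot D_i = \sD\cdot(H-2E_i)=0, \qquad \sD\cdot D = 0, \qquad \sD\cdot(2D-H) = -2 .
\]
So the coefficients specialize as follows: $t^{D_i}\mapsto 1$, $t^{H}\mapsto t^2$, $t^{2D-H}\mapsto t^{-2}$ and $t^{D}\mapsto 1$. This gives
\[
\var_{v_1}\var_{v_2}\var_{v_3}=\var_{v_1}^2+\var_{v_2}^2+\var_{v_3}^2+t^2+t^{-2}-2 .
\]
One subtlety: a monoid homomorphism $NE(Y)\to\mathbb{Z}$ only defines a map from $\Spec\,\kk[\mathbb{Z}]$ into $\Spec\,\kk[NE(Y)]$. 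This is a base change of the family, which is all the corollary needs. Because $R_{(Y,D)}$ is presented as a quotient of a polynomial algebra by a single relation, the base change is obtained by tensoring that presentation.

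Next I compare with \eqref{eq_character_variety}. I set $x=\var_{v_1}$, $y=\var_{v_2}$, $z=\var_{v_3}$. Then \eqref{eq_cubic} reads $xyz=x^2+y^2+z^2+t^2+t^{-2}-2$. Under $\tau=-t^2-t^{-2}$ the constant $-\tau-2$ becomes $t^2+t^{-2}-2$, so the two equations agree. Thus the family over $\mathbb{G}_m$ is the pullback of the relative $SL(2)$-character variety family along the map $\mathbb{G}_m\to\mathbb{A}^1=SL(2)/\!/SL(2)$, $t\mapsto -t^2-t^{-2}$.

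Finally, the text preceding the corollary identifies $\mathrm{Ch}^{\tau}_{1,1}(PGL(2))=Y\setminus D$ for general $\tau$, with $(Y,D)$ a three-nodal cubic surface and a triangle of lines avoiding the nodes. This is exactly the log Calabi--Yau compactification required in Conjecture~\ref{conj_intro}, and the base change above is the one the conjecture permits, so the conjecture holds in this case.

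The only genuine obstacle is this last identification. One must check that the projective closure of $uvw=(u+v+w-\tau-2)^2$ in $\mathbb{P}^3$ is a three-nodal cubic whose boundary at infinity is a triangle of lines avoiding the nodes. At $w_0=0$ the boundary is $uvw=0$, a triangle. The nodes are where $u+v+w-\tau-2=0$ meets two coordinate planes, and these lie in the affine part. The remaining routine step is to check that the orbifold structure on $Y$ matches the one used in Theorem~\ref{thm_main_intro_1}, since the three $A_1$ singularities are all in $Y\setminus D$.
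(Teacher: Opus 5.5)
Your proposal is correct and follows the same route as the paper. You specialize the relation of Theorem~\ref{thm_main_intro_1} along $C\mapsto \sD\cdot C$ to obtain \eqref{eq_cubic}; your values $\sD\cdot H=2$, $\sD\cdot E_i=1$, $\sD\cdot D_i=0$ and $\sD\cdot(2D-H)=-2$ are right. You then match the constant term with $-\tau-2$ via $\tau=-t^2-t^{-2}$, and invoke the identification $\mathrm{Ch}^{\tau}_{1,1}(PGL(2))=Y\setminus D$ for general $\tau$.

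Your extra check on the projective closure of $uvw=(u+v+w-\tau-2)^2$ fills in a step the paper only asserts. For general $\tau$, the singular points are the three affine nodes $(\tau+2,0,0)$, $(0,\tau+2,0)$, $(0,0,\tau+2)$, and the surface is smooth along the boundary triangle $uvw=0$ at infinity.
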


\subsection{Quantum mirror symmetry and spherical double affine Hecke algebra}
We explain how Theorem \ref{thm_main_intro_2} recovers the $A_1$ spherical double affine Hecke algebra.
Double affine Hecke algebras (DAHAs) were first introduced in \cite{Cherednik1} and are labeled by simple Lie algebras. We give a brief review of the DAHA of type $A_1$, that is, associated to the Lie algebra
$\mathfrak{sl}_2$, following the conventions of \cite[\S 2.2]{BranesDAHA}.

Let $\kk_{q,t}$ be the localization of $\kk[q^{\pm \frac{1}{2}}, t^{\pm 1}]$ at the multiplicative system generated by elements of the form $q^\ell t - q^{-\ell} t^{-1}$ with $\ell \in \mathbb{Z}_{\geq 0}$.
The $A_1$ DAHA is the $\kk_{q,t}$-algebra 
\begin{align*}
	\Bbb{H}_{q,t} = \kk_{q,t}[X^{\pm 1}, Y^{\pm 1}, T^{\pm 1}]/\left \langle \begin{array}{cc}TXT=X^{-1}~,
		& Y^{-1}X^{-1}YXT^2=q^{-1}~,\\
		TY^{-1}T=Y~,& (T-t^{1})(T+t^{-1})=0\end{array}\right \rangle \,.
\end{align*}
The $A_1$ spherical DAHA is the subalgebra $\mathbb{SH}_{q,t}$ of $\Bbb{H}_{q,t}$
defined by 
\[ \mathbb{SH}_{q,t} = e \Bbb{H}_{q,t}e \,,\]
where  $e \in \Bbb{H}_{q,t}$ is the idempotent 
$e = \dfrac{1 + tT}{1+t^2}$, which satisfies the relations $eT = Te = te$,  $eT^{-1} = T^{-1}e = t^{-1}e$. 
As reviewed in \cite[\S 2.2, Eq. (2.50)]{BranesDAHA}, $\mathbb{SH}_{q,t}$ is isomorphic to the associative
$\kk_{q,t}$-algebra generated by 
\begin{align*}
	&x = e(X + X^{-1})e  \,,\\
	&y = e(Y + Y^{-1})e \,,\\
	&z = e\left(q^{-\frac{1}{2}}Y^{-1}X + q^{\frac{1}{2}}X^{-1}Y\right)e \,, 
\end{align*}
which satisfy the relations,
\begin{align*}
	 q^{-\frac{1}{2}}xy - q^{\frac{1}{2}}yx  &= (q^{-1}-q)z  \,,\\
	 q^{-\frac{1}{2}}yz - q^{\frac{1}{2}}zy &= (q^{-1} - q)x \,,\\
	q^{-\frac{1}{2}}zx - q^{\frac{1}{2}}xz  &= (q^{-1} - q)y \,, \\
  q^{-\frac{1}{2}}xyz  = 	q^{-1}x^2 + qy^{2} +& q^{-1}z^{2}  -  q^{-1}t^{2} - q t^{-2} - q - q^{-1} \,.
\end{align*}
Therefore, comparison with Theorem \ref{thm_main_intro_2} establishes the following:

\begin{cor} \label{cor_intro}
Let $(Y,D)$ be an orbifold cubic surface over $\kk$, where $Y$ is a projective cubic surface with three nodes, and $D \subset Y$ is a triangle of lines avoiding the nodes. 
Then, the base change of the quantum intrinsic mirror algebra
$\widehat{R}_{(Y,D)}$
of $(Y,D)$ by the map \[ \mathbb{G}_m =\Spec \, \kk[\mathbb{Z}] =\Spec \kk[t^\pm]\hookrightarrow \Spec\, \kk[NE(Y)] \] defined by 
$NE(Y)\rightarrow \mathbb{Z}$, 
$[C] \mapsto  [\sD] \cdot [C]$, is isomorphic to the $A_1$ spherical DAHA 
\[ \mathbb{SH}_{q, i q^{\frac{1}{2}}t}\,.\]
\end{cor}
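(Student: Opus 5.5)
The plan is to combine the presentation of $\widehat{R}_{(Y,D)}$ from Theorem \ref{thm_main_intro_2} with the presentation of $\mathbb{SH}_{q,t}$ by $x,y,z$ recalled above from \cite[Eq. (2.50)]{BranesDAHA}. I would then match generators and relations directly.

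\emph{Base change of the presentation.} Base change along the ring map $\kk_q[\operatorname{NE}(Y)]\to \kk_q[t^{\pm1}]$, $t^C\mapsto t^{\sD\cdot C}$, is a tensor product, and tensor product is right exact. So the base-changed algebra is presented by the same generators $\widehat{\vartheta}_{v_1},\widehat{\vartheta}_{v_2},\widehat{\vartheta}_{v_3}$, with the images of the relations of Theorem \ref{thm_main_intro_2}. To compute these images I use $\sD\cdot D_i=0$, $\sD\cdot D=0$ and $\sD\cdot H=2$, all stated in the introduction. These give
\[ t^{D_i}\mapsto 1,\qquad t^{D}\mapsto 1,\qquad t^{H}\mapsto t^{2},\qquad t^{2D-H}\mapsto t^{-2}. \]
The three $q$-commutator relations contain no curve-class monomials, so they are unchanged. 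The cubic relation becomes
\[ q^{-\frac12}\widehat{\vartheta}_{v_1}\widehat{\vartheta}_{v_2}\widehat{\vartheta}_{v_3}=q^{-1}\widehat{\vartheta}_{v_1}^2+q\,\widehat{\vartheta}_{v_2}^2+q^{-1}\widehat{\vartheta}_{v_3}^2+t^2+t^{-2}-(q+q^{-1}). \]

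\emph{Matching with the DAHA.} Set $t'=i q^{\frac12}t$. Since $\kk$ is algebraically closed, $i\in\kk$, and $t'^{\pm1}$ lies in $\kk[q^{\pm\frac12},t^{\pm1}]$. Substituting, we get
\[ -q^{-1}t'^2=-q^{-1}(-q t^2)=t^2, \qquad -q\,t'^{-2}=-q(-q^{-1}t^{-2})=t^{-2}. \]
Hence the cubic relation of $\mathbb{SH}_{q,t'}$ is exactly the displayed relation under $x\mapsto\widehat{\vartheta}_{v_1}$, $y\mapsto\widehat{\vartheta}_{v_2}$, $z\mapsto\widehat{\vartheta}_{v_3}$. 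The three $q$-commutator relations also coincide verbatim under this assignment. Two algebras with identical presentations over the same base ring are isomorphic, which gives the claimed isomorphism.

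\emph{Main obstacle: the base rings.} The only delicate point is that $\mathbb{SH}_{q,t'}$ is defined over the localization $\kk_{q,t'}$, whereas the base change is naturally defined over $\kk[q^{\pm\frac12},t^{\pm1}]$. I would handle this by inverting the elements $q^\ell t'-q^{-\ell}t'^{-1}$ on the mirror side, which is again a base change preserving the presentation. Equivalently, I would note that the presentation of \cite{BranesDAHA} by $x,y,z$ is defined before localizing, and interpret the isomorphism over the common ring.

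I would also check that the presentation of \cite[Eq. (2.50)]{BranesDAHA} is complete, meaning no further relations are needed. If it is, the identification is just the relabelling above and nothing beyond this bookkeeping is required.
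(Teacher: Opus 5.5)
Your proposal is correct and follows the paper's argument, which is precisely this direct comparison: the presentation of Theorem \ref{thm_main_intro_2} is specialized via $t^{C}\mapsto t^{\sD\cdot C}$ (so $t^{D_i},t^{D}\mapsto 1$, $t^{H}\mapsto t^{2}$, $t^{2D-H}\mapsto t^{-2}$) and matched with the $x,y,z$ presentation of $\mathbb{SH}_{q,t'}$ for $t'=i q^{\frac12}t$. Your observation that the elements $q^\ell t'-q^{-\ell}t'^{-1}$ must also be inverted on the mirror side to match the base ring $\kk_{q,t'}$ is left implicit in the paper, and you handle it correctly.
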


Finally, we note that the $A_1$ spherical double affine Hecke algebra arises in theoretical physics as the algebra of line operators in the four-dimensional $\mathcal{N}=2^\star$ gauge theory with gauge group $SU(2)$. Equivalently, this theory can be realized as the Class $S$ theory of type $A_1$ associated with $\mathbb{S}_{1,1}$; see, for instance, \cite{BranesDAHA, gupta2026spherical, TV} and the references therein. As explained in \cite[\S 1.3]{bousseau_skein} for the case of $\mathbb{S}_{0,4}$, the description of the spherical DAHA in terms of the intrinsic mirror construction given by Corollary \ref{cor_intro} agrees with the expected description of the algebra of line operators in terms of BPS states.

\subsection{Outline of the paper}
In \S\ref{Section 2}, we introduce orbifold cubic surfaces and discuss their basic properties. 
In \S\ref{Section 4}, we define and compute the canonical scattering diagram for cubic surfaces with three nodes, which we use in \S\ref{Section 5} to determine the intrinsic mirror algebra. Finally, we turn to the quantum setting: in \S\ref{Section 6}, we compute the quantum canonical scattering diagram, and in \S\ref{Section 7}, we use this computation to determine the quantum intrinsic mirror algebra.

\subsection{Acknowledgments} 
The first author acknowledges the support of a Sloan Research Fellowship from the Alfred P. Sloan Foundation. For the purpose of open access, the authors have applied a CC BY public copyright licence to any author accepted manuscript arising from this submission.

\section{Orbifold cubic surfaces}
\label{Section 2}

\subsection{Cubic surfaces with three nodes}
In this paper, $Y$ denotes a projective cubic surface in $\mathbb{P}^3$ with three nodes (ordinary double points, or $A_1$-singularities), and $D \subset Y$ is a triangle of lines disjoint from the nodes. Since nodes are quotient singularities, étale locally of the form $\mathbb{A}^2/(\mathbb{Z}/2\mathbb{Z})$, the surface $Y$ admits a natural interpretation as a smooth Deligne--Mumford stack. For brevity, we refer to the pair $(Y,D)$ as an \emph{orbifold cubic surface with three nodes}.

All such surfaces admit the following construction. Start with the projective plane $\mathbb{P}^2$, equipped with a triangle of lines
\[
\overline{D}=\overline{D}_1+\overline{D}_2+\overline{D}_3.
\]
Choose points $p_i\in\overline{D}_i$, for $1\leq i\leq 3$, such that $p_1,p_2,p_3$ are not collinear. Let
$
Y'\longrightarrow \mathbb{P}^2$
be the blow-up at $p_1,p_2,p_3$, with exceptional curves $E_1',E_2',E_3'$, and let
\[
D'=D_1'+D_2'+D_3'
\]
be the strict transform of $\overline{D}$.

Next, let
$
Y''\longrightarrow Y'$
be the blow-up at the three points $E_i'\cap D_i'$, for $1\leq i\leq 3$, and let
\[
D''=D_1''+D_2''+D_3''
\]
be the strict transform of $D'$. Denote by $E_{ii}''$ the exceptional curves of $Y''\to Y'$; these are $(-1)$-curves. We denote by $E_i''$ the strict transforms of the curves $E_i'$; these are $(-2)$-curves.

Finally, let $Y$ be the surface obtained from $Y''$ by contracting the three $(-2)$-curves $E_i''$, and let $D$ be the image of $D''$. The resulting pair $(Y,D)$ is an orbifold cubic surface with three nodes $x_1,x_2,x_3$. Conversely, every orbifold cubic surface with three nodes arises in this way, since each step of the construction can be reversed. In particular, the morphism
$Y''\longrightarrow Y$
is the minimal resolution of $Y$; see Figure~\ref{Figure 1}. We denote by $E_i$ the images of the curves $E_{ii}''$.

\begin{figure}[h!]
    \centering
    \includegraphics[width=7cm]{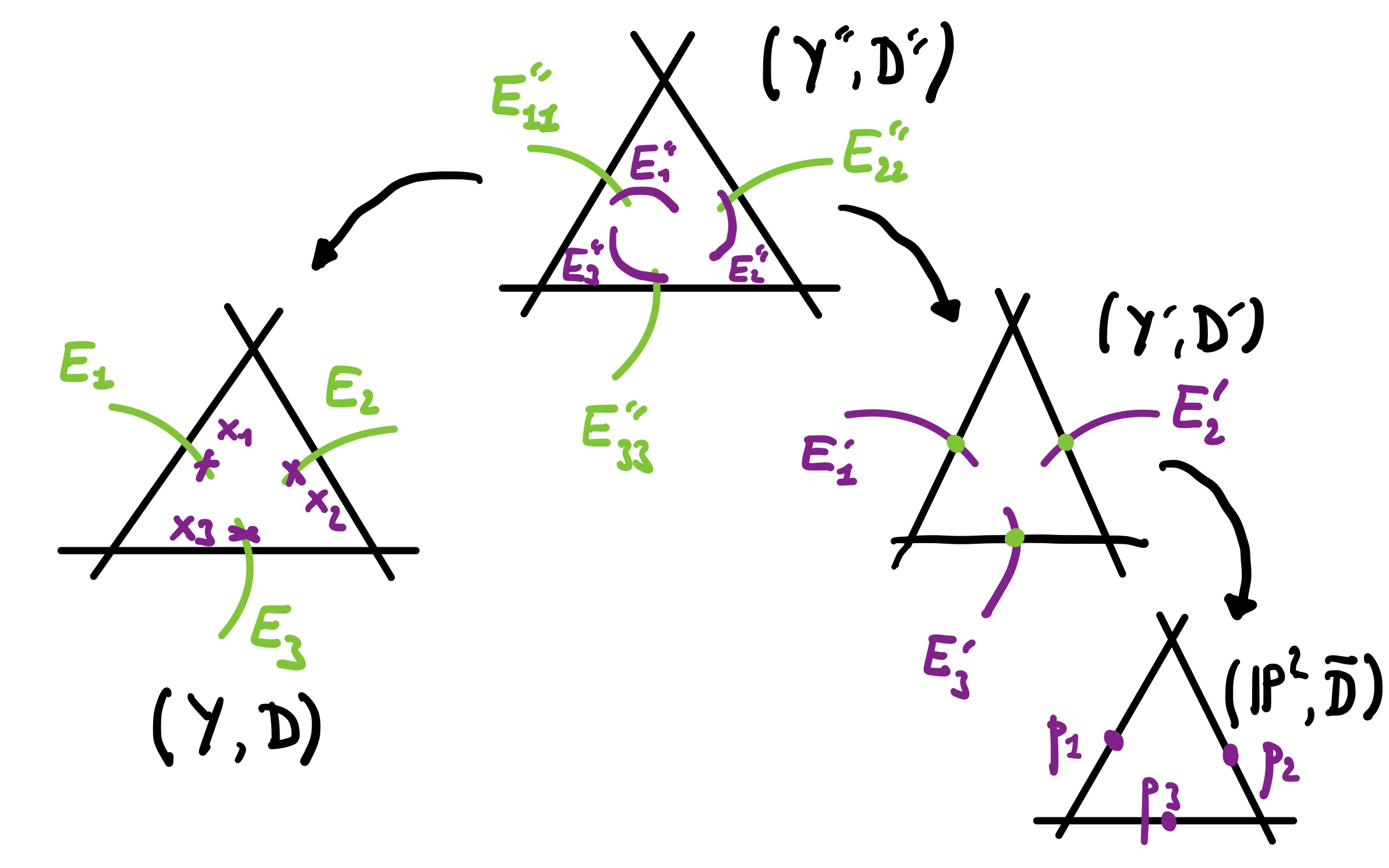}
         \caption{Cubic surfaces}
              \label{Figure 1}
     \end{figure}

Up to isomorphism, orbifold cubic surfaces are determined by the positions of the points $p_i$ on the lines $\overline{D}_i$, modulo the action of the torus $(\mathbb{G}_m)^2$ on $\mathbb{P}^2$ preserving the triangle $\overline{D}$. Consequently, orbifold cubic surfaces with three nodes form a one-parameter family.
An explicit one-parameter family of cubic surfaces containing all orbifold cubic surfaces with three nodes is given by
\[
xyz = x^2 + y^2 + z^2 + 2t^2(x+y+z) + (t^4-4t^2-4).
\]
For $t\neq 0,\pm2$, this defines an orbifold cubic surface with three nodes. The exceptional values correspond to degenerations: at $t=0$, the cubic surface has four nodes, whereas at $t=\pm 2$, it has a singularity of type $D_4$.

In this paper, we study curves on $Y$. We begin by recalling the configuration of lines on $Y$. By \cite[\S 2.14]{dolgachev_cubic}, an orbifold cubic surface with three nodes contains exactly $12$ lines in $\mathbb{P}^3$. These lines can be divided into three types according to the number of nodes they contain.
First, exactly three lines are disjoint from the nodes; these are precisely the components $D_1,D_2,D_3$ of $D$. Next, there are six lines containing exactly one node. More precisely, through each node $x_i$ there are exactly two such lines, whose classes are
\[
L_1^{v_i}=E_i,
\qquad
L_2^{v_i}=2H-2E_j-2E_k-E_i,
\]
where $\{j,k\}=\{1,2,3\}\setminus\{i\}$ and $H$ denotes the pullback of the class of a line in $\mathbb{P}^2$.
Finally, there are three lines containing two nodes. For each pair of distinct nodes $x_i$ and $x_j$, there is a unique line passing through both, whose class is
\[
H-E_i-E_j.
\]

\subsection{Tropical cubic surfaces}

Finally, we introduce the tropicalization of $(Y,D)$. Tropicalizations of log Calabi--Yau surfaces are introduced in \cite{GHK1}. This construction only depends on the self-intersection numbers of the components of $D$. Therefore, for orbifold cubic surfaces, the tropicalization is identical to the tropicalization of smooth cubic surfaces studied in \cite{GHKScubic} and can be described as follows.

Let $B$ be the quotient of $\mathbb{R}^2$ by the involution
\[
(x,y)\longmapsto (-x,-y).
\]
The standard integral affine structure determined by the lattice $\mathbb{Z}^2\subset\mathbb{R}^2$ descends to an integral affine structure on $B\setminus\{0\}$. This structure does not extend across the origin as a nonsingular integral affine structure, so that $0\in B$ is an integral affine singularity. We denote by
$B(\mathbb{Z})$
the set of integral points of $B$.
We now define a cone complex $\Sigma$ on $B$. It is convenient to realize $B$ as the upper half-plane
\[
\mathbb{H}=\{(x,y)\in\mathbb{R}^2\mid y\geq 0\},
\]
with the boundary identification
$(x,0)\sim(-x,0)$.
Consider the integral vectors
\[
v_1=(1,0),\qquad v_2=(0,1),\qquad v_3=(-1,1),
\]
and let
$
\rho_i:=\mathbb{R}_{\geq 0}v_i$, $i=1,2,3$,
be the corresponding one-dimensional cones. The two-dimensional cones are
$\sigma_{i,i+1}:=\langle v_i,v_{i+1}\rangle$, 
where the indices are taken modulo $3$. Thus,
\[
\Sigma
=
\left\{
\{0\},
\rho_1,\rho_2,\rho_3,
\sigma_{12},\sigma_{23},\sigma_{31}
\right\}
\]
defines a cone complex in $B$.
The pair $(B,\Sigma)$ is the tropicalization of $(Y,D)$. We will refer to $(B,\Sigma)$ as the \emph{tropical cubic surface}.

\section{The canonical scattering diagram}
\label{Section 4}

 In \S \ref{sec_def_canonical}, we define the canonical scattering diagram for orbifold cubic surfaces. The main result of this section is the explicit calculation of this canonical scattering diagram in \S\ref{sec_calculation_canonical}-\ref{Subsection 4.1} 
 by direct evaluation of genus zero orbifold log Gromov--Witten invariants.

\subsection{Definition of the canonical scattering diagram}
\label{sec_def_canonical}

Let $(B, \Sigma)$ be the tropical cubic surface defined in \S\ref{Section 2}. Recall that $\kk[NE(Y)]$ is the monoid algebra of the monoid $NE(Y)$ of effective curve classes on $Y$. An ideal $I \subset \kk[NE(Y)]$ is called co-Artinian if the quotient $\kk[NE(Y)]/I$ is an Artinian $\kk$-algebra.

\begin{defn} A \emph{ray} in $(B, \Sigma)$ is a pair $(\mathfrak{d}, f_{\mathfrak{d}})$, where \begin{itemize}
    \item[(i)] $\mathfrak{d} \subset B$ is a ray $\mathbb{R}_{\geq 0}v_\fd$ generated by some primitive $v_{\fd} \in B(\Bbb{Z}) \setminus \{0\}$.
    \item[(ii)] $f_{\mathfrak{d}} \in \kk[NE(Y)][\![z^{-v_\fd}]\!]$ such that $f_{\mathfrak{d}} =  1 \operatorname{mod}z^{-v_\fd}$, and for every co-Artinian ideal $I \subset \kk[NE(Y)]$, $f_{\mathfrak{d}}\, \mathrm{mod}\, I$ is a finite sum.
\end{itemize}    
\end{defn}
\begin{defn}
    A \emph{scattering diagram} $\mathfrak{D}$ is a collection of rays $(\mathfrak{d}, f_{\mathfrak{d}})$ such that:
   \begin{itemize} 
\item[(i)]    $\Bbb{R}_{\geq 0}v_{\mathfrak{d}} = \Bbb{R}_{\geq 0}v_{\mathfrak{d}'}$ implies $\mathfrak{d}=\mathfrak{d}'$,
\item[(ii)] for every co-Artinian ideal $I \subset \kk[NE(Y)]$, there are only finitely many rays $(\mathfrak{d}, f_{\mathfrak{d}})$ such that $f_{\mathfrak{d}} \neq 1\, \mathrm{mod}\, I$.
    \end{itemize}
\end{defn}

Let $(Y,D)$ be an orbifold cubic surface with three nodes. Fix a curve class $\beta \in \operatorname{NE}(Y)$ and an integral point $v \in B(\Bbb{Z})$. We denote by
$\overline{\mathcal{M}}^{\beta}_{0,v}(Y,D)$
the moduli space of genus-zero orbifold basic stable log maps to $(Y,D)$ with one marked point of contact order $v$ along $D$.
The theory of stable log maps to log smooth log schemes was developed in \cite{AC, GSlog}, while the theory of orbifold stable maps to smooth Deligne--Mumford stacks was established in \cite{orbifold_GW}. A general theory of orbifold stable log maps to log orbifolds has not yet been developed. In the present setting, however, this causes no additional difficulty. Indeed, the orbifold points of $(Y,D)$ are disjoint from $D$. Consequently, the points of the domain curve carrying non-trivial orbifold structure are disjoint from those carrying non-trivial logarithmic structure. The orbifold and logarithmic structures can therefore be treated independently, allowing one to define $\overline{\mathcal{M}}^{\beta}_{0,v}(Y,D)$ and establish the required properties by combining the corresponding theories. This situation is also discussed and used in \cite[\S 5.5]{GPS}.
 Thus, $\overline{\mathcal{M}}^{\beta}_{0,v}(Y, D)$ is a proper Deligne--Mumford stack and carries a zero-dimensional virtual class $[\overline{\mathcal{M}}^{\beta}_{0,v}(Y, D)]^{\mathrm{vir}}$.
 We define genus zero orbifold log Gromov--Witten invariants:
\[N^{\beta}_{0, v} := \operatorname{deg}
[\overline{\mathcal{M}}^{\beta}_{0,v}(Y, D)]^{\mathrm{vir}}  \in \mathbb{Q} \,.\]
We can now define the canonical scattering diagram associated with $(Y,D)$ as in \cite{GHK_moduli}, with the only modification arising from the orbifold setting.

\begin{defn}
Let $(Y,D)$ be an orbifold cubic surface with three nodes. 
    The \emph{canonical scattering diagram} $\mathfrak{D}_{\text{can}}$ of $(Y,D)$ is the collection of rays $(\mathfrak{d}, f_{\mathfrak{d}})$, where $\mathfrak{d} = \Bbb{R}_{\geq 0}v$ with primitive $v \in B(\mathbb{Z})$ and 
    \[f_{\mathfrak{d}} = \operatorname{exp}\left(\sum_{k\geq 1}\sum_{\beta \in NE(Y)} k N_{0, kv}^\beta t^{\beta}z^{-kv}\right) \in \kk[NE(Y)][\![z^{-v}]\!]\,.\]
\end{defn}

\subsection{The ray $(\rho_1, f_{\rho_1})$ in the canonical scattering diagram}
\label{sec_calculation_canonical}

In this section, we determine the function $f_{\rho_1}$ attached to the ray $\rho_1 = \mathbb{R}_{\geq 0}v_1$ in the canonical scattering diagram 
$\mathfrak{D}_{\text{can}}$. We first identify the curve classes which could contribute non-trivially to $f_{\rho_1}$. We will use the following notation: \[ L_1^{v_1}:=E_1,\,\,\,\,\, L_2^{v_1}:=2H-2E_2-2E_3-E_1,\,\,\,\,\, \text{and} \,\,\,\,C^{v_1}:=2H-2E_2-2E_3\,.\]

\begin{lemma} \label{lem_classification}
Let $\beta \in NE(Y)$ be the curve class of a generically injective map $f: C \rightarrow Y$ such that $\beta \cdot D_1=k$  for some $k \geq 1$
and $\beta \cdot D_2 = \beta \cdot D_3=0$, where $C$ is an integral curve. Then, either $k=1$, in which case $\beta=L_1^{v_1}$, $L_2^{v_1}$, or $H-E_2-E_3$, or $k=2$, in which case $\beta=C^{v_1}$.
\end{lemma}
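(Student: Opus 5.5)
Write $\beta = aH - b_1E_1 - b_2E_2 - b_3E_3$ in the basis of numerical classes. The plan is first to turn the intersection conditions into linear constraints on $(a,b_1,b_2,b_3)$, and then to bound $a$ and $k$ by an adjunction argument carried out on the minimal resolution $Y'' \to Y$. The intersection numbers on $Y$ are $H^2=1$, $H\cdot E_i=0$, $E_i\cdot E_j=0$ for $i\neq j$ and $E_i^2=-\tfrac12$. Since $D_i = H-2E_i$, we get $\beta\cdot D_i = a - b_i$. The hypotheses therefore give
\[
b_2=b_3=a, \qquad b_1=a-k .
\]
Moreover $H$ is nef (it is the pullback of a line class), so $a=\beta\cdot H\geq 0$. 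Using $K_Y=-D$ and these constraints, we compute
\[
\beta^2 = a^2-\tfrac12\bigl((a-k)^2+2a^2\bigr)=-\tfrac12 (a-k)^2, \qquad K_Y\cdot\beta = -\beta\cdot D=-k .
\]

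Next, $f(C)$ is not a component of $D$. Indeed, $D_1^2=-1$, so $D_1$ does not satisfy $\beta\cdot D_1=k\geq 1$. Also $D_2\cdot D_3=1$, and symmetrically $D_3\cdot D_2=1$, so neither $D_2$ nor $D_3$ satisfies $\beta\cdot D_2=\beta\cdot D_3=0$. Hence $f(C)$ is an integral curve in $Y$, and its strict transform $C''\subset Y''$ is an integral curve. Write $\pi\colon Y''\to Y$. Then
\[
\pi^*\beta = [C''] + \sum_i c_i E_i'' ,
\]
where $c_i = \tfrac12\, C''\cdot E_i''\geq 0$, using $E_i''^2=-2$. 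Since $\pi$ is crepant, $K_{Y''}=\pi^*K_Y$, and so $K_{Y''}\cdot C'' = K_Y\cdot\beta=-k$. Furthermore
\[
C''^2=\beta^2-\sum_i 2c_i^2\leq \beta^2 .
\]
Adjunction on the smooth surface $Y''$ then gives
\[
-2\leq 2p_a(C'')-2 = C''^2+K_{Y''}\cdot C''\leq -\tfrac12 (a-k)^2 - k .
\]
This yields $k\leq 2$ and $(a-k)^2\leq 4-2k$.

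It remains to read off the solutions. For $k=2$ we get $a=2$ and $b_1=0$, so $\beta=2H-2E_2-2E_3=C^{v_1}$. For $k=1$ we get $(a-1)^2\leq 2$, so $a\in\{0,1,2\}$, with $b_1=a-1$ and $b_2=b_3=a$:
\begin{itemize}
\item $a=0$ gives $\beta=E_1=L_1^{v_1}$;
\item $a=1$ gives $\beta=H-E_2-E_3$;
\item $a=2$ gives $\beta=2H-E_1-2E_2-2E_3=L_2^{v_1}$.
\end{itemize}
The step that needs the most care is the orbifold adjunction. Working directly on $Y$ requires an orbifold correction term for curves passing through the nodes. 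Passing to the crepant resolution $Y''$ avoids this: the inequality $C''^2\leq\beta^2$ absorbs the correction with the right sign, and $K$ is unchanged.
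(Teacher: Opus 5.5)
Your proof is correct and is essentially the paper's argument: the boundary intersection conditions fix $\beta$ in terms of $(a,k)$, and adjunction on the crepant resolution $Y''$ (with $K_{Y''}=-D''$) cuts $(a,k)$ down to $(0,1),(1,1),(2,1),(2,2)$. The only difference is bookkeeping. You work with $\mathbb{Q}$-valued intersection numbers on $Y$ and the Mumford pullback identity $C''^2=\beta^2-2\sum_i c_i^2$, which is exactly the real relaxation of the paper's minimization over the integer coefficients of the $E_i''$ on $Y''$ (the paper's integrality refinement via $\lceil n^2/2\rceil$ is not needed here); this also spares you the paper's case-by-case determination of those coefficients before pushing forward.
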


\begin{proof}
We follow the same strategy as in the proof of
\cite[Proposition 2.4]{GHKScubic}. As in \S2, let
$\pi \colon Y'' \longrightarrow Y$
be the minimal resolution of $Y$. Let
$D''=D''_1+D''_2+D''_3$ 
be the strict transform of $D=D_1+D_2+D_3$. Denote by
$E''_1,E''_2,E''_3$ the exceptional divisors of $\pi$, which are
$(-2)$-curves, and by $E''_{11},E''_{22},E''_{33}$ the strict
transforms of $E_1,E_2,E_3$, which are $(-1)$-curves. We use the
basis
\[
H'',E''_1,E''_2,E''_3,E''_{11},E''_{22},E''_{33}
\]
of $\Pic(Y'')$, where $H''$ is the pullback of the class of a line
in $\mathbb P^2$. For $1\leq i\leq 3$, we have
\[
D''_i=H''-E''_i-2E''_{ii}.
\]

Let $\beta''$ be the class of the strict transform of
$f(C)$. Since the exceptional curves $E''_i$ are disjoint
from $D''$, the projection formula gives
\[
\beta''\cdot D''_1=k,
\qquad
\beta''\cdot D''_2=\beta''\cdot D''_3=0.
\]
Write
\[
\beta''
=
aH''
-\sum_{i=1}^3 b_iE''_i
-\sum_{i=1}^3 b_{ii}E''_{ii},
\qquad
a,b_i,b_{ii}\in\mathbb Z.
\]
Since $\beta''$ is represented by an effective integral curve and
$H''$ is nef, we have
$a=\beta''\cdot H''\geq 0$.
The intersection conditions with the boundary give
\begin{align}
a-b_{11}&=k, \label{eq:lemma34-1}\\
a-b_{22}&=0, \label{eq:lemma34-2}\\
a-b_{33}&=0. \label{eq:lemma34-3}
\end{align}
Thus
$b_{11}=a-k$ and
$b_{22}=b_{33}=a$.
Since $\beta''$ is represented by an integral curve, its arithmetic
genus is nonnegative. Moreover, $K_{Y''}=-D''$, and hence the
adjunction formula gives
\begin{align*}
-2
&\leq 2p_a(\beta'')-2 =(K_{Y''}+\beta'')\cdot\beta''\\
&=(\beta''-D'')\cdot\beta''
=
a^2-\sum_{i=1}^3 b_i^2
+2\sum_{i=1}^3b_i b_{ii}
-2\sum_{i=1}^3b_{ii}^2-k\\
&=
a^2-\sum_{i=1}^3(b_i-b_{ii})^2
-\sum_{i=1}^3b_{ii}^2-k.
\end{align*}
Using \eqref{eq:lemma34-1}--\eqref{eq:lemma34-3}, we obtain
\begin{equation}\label{eq:lemma34-adjunction}
\begin{split}
-2\leq {}&
a^2-k
-\bigl((a-k-b_1)^2+b_1^2\bigr)\\
&-\bigl((a-b_2)^2+b_2^2\bigr)
-\bigl((a-b_3)^2+b_3^2\bigr).
\end{split}
\end{equation}

For every $n\in\mathbb Z$,
\[
\min_{b\in\mathbb Z}
\left\{(n-b)^2+b^2\right\}
=
\left\lceil\frac{n^2}{2}\right\rceil.
\]
It follows from \eqref{eq:lemma34-adjunction} that
\begin{equation}\label{eq:lemma34-bound}
-2
\leq
a^2-k
-\left\lceil\frac{(a-k)^2}{2}\right\rceil
-2\left\lceil\frac{a^2}{2}\right\rceil.
\end{equation}
If $a$ is even, the right-hand side of
\eqref{eq:lemma34-bound} is
\[
-k-\left\lceil\frac{(a-k)^2}{2}\right\rceil,
\]
whereas if $a$ is odd, it is
\[
-1-k-\left\lceil\frac{(a-k)^2}{2}\right\rceil.
\]
Since $a\geq0$ and $k\geq1$, the only possibilities are
\[
(a,k)=(0,1),\qquad (1,1),\qquad (2,1),\qquad (2,2).
\]
We consider these four cases separately.

\smallskip
\noindent
\emph{Case 1: $(a,k)=(2,2)$.}
In this case,
\[
b_{11}=0,
\qquad
b_{22}=b_{33}=2,
\]
and \eqref{eq:lemma34-adjunction} becomes
\[
-2
\leq
2-2b_1^2
-\bigl((2-b_2)^2+b_2^2\bigr)
-\bigl((2-b_3)^2+b_3^2\bigr).
\]
Since
\[
(2-b)^2+b^2\geq2
\]
for every $b\in\mathbb Z$, with equality if and only if $b=1$,
we must have
\[
b_1=0,\qquad b_2=b_3=1.
\]
Therefore
\[
\beta''
=
2H''
-E''_2-2E''_{22}
-E''_3-2E''_{33}.
\]
Since
\[
\pi_*H''=H,\qquad
\pi_*E''_i=0,\qquad
\pi_*E''_{ii}=E_i,
\]
we obtain
\[
\beta
=
\pi_*\beta''
=
2H-2E_2-2E_3
=
C^{v_1}.
\]

\smallskip
\noindent
\emph{Case 2: $(a,k)=(1,1)$.}
Here
\[
b_{11}=0,
\qquad
b_{22}=b_{33}=1,
\]
and \eqref{eq:lemma34-adjunction} becomes
\[
-2
\leq
-2b_1^2
-\bigl((1-b_2)^2+b_2^2\bigr)
-\bigl((1-b_3)^2+b_3^2\bigr).
\]
For $b\in\mathbb Z$,
\[
(1-b)^2+b^2\geq1,
\]
with equality if and only if $b\in\{0,1\}$. Hence
\[
b_1=0,
\qquad
b_2,b_3\in\{0,1\}.
\]
Thus the numerical class $\beta''$ is one of
\[
\begin{split}
&H''-E''_{22}-E''_{33},\\
&H''-E''_2-E''_{22}-E''_{33},\\
&H''-E''_3-E''_{22}-E''_{33},\\
&H''-E''_2-E''_{22}-E''_3-E''_{33}.
\end{split}
\]
Since the $E''_i$ are contracted by $\pi$, all four possibilities
have the same pushforward $
\beta
=
H-E_2-E_3$.

\smallskip
\noindent
\emph{Case 3: $(a,k)=(2,1)$.}
Here
\[
b_{11}=1,
\qquad
b_{22}=b_{33}=2,
\]
and \eqref{eq:lemma34-adjunction} becomes
\[
-2
\leq
3
-\bigl((1-b_1)^2+b_1^2\bigr)
-\bigl((2-b_2)^2+b_2^2\bigr)
-\bigl((2-b_3)^2+b_3^2\bigr).
\]
We have
\[
(1-b)^2+b^2\geq1,
\]
with equality if and only if $b\in\{0,1\}$, and
\[
(2-b)^2+b^2\geq2,
\]
with equality if and only if $b=1$. Therefore
\[
b_1\in\{0,1\},
\qquad
b_2=b_3=1.
\]
Consequently, the numerical class $\beta''$ is
\[
\beta''
=
2H''
-b_1E''_1-E''_{11}
-E''_2-2E''_{22}
-E''_3-2E''_{33},
\qquad b_1\in\{0,1\}.
\]
Both possibilities have the same pushforward
$
\beta
=
2H-E_1-2E_2-2E_3
=
L^{v_1}_2$.

\smallskip
\noindent
\emph{Case 4: $(a,k)=(0,1)$.}
In this case,
\[
b_{11}=-1,
\qquad
b_{22}=b_{33}=0,
\]
and \eqref{eq:lemma34-adjunction} becomes
\[
-2
\leq
-1-\bigl((-1-b_1)^2+b_1^2\bigr)
-2b_2^2-2b_3^2.
\]
Since
\[
(-1-b)^2+b^2\geq1,
\]
with equality if and only if $b\in\{0,-1\}$, we obtain
\[
b_2=b_3=0,
\qquad
b_1\in\{0,-1\}.
\]
Thus,
$
\beta''=E''_{11}$ or 
$\beta''=E''_1+E''_{11}$.
Since $E''_1$ is
contracted by $\pi$, in either case, we have
$
\beta
=
\pi_*\beta''
=
E_1
=
L^{v_1}_1$. 

Combining the four cases, if $k=1$, then
$
\beta=L^{v_1}_1$,
$\beta=L^{v_1}_2$, or
$\beta=H-E_2-E_3$,
while if $k=2$, then
$\beta=C^{v_1}$. 
This proves the lemma.
\end{proof}

\begin{lemma} \label{lem_lines}
On $Y$, there exists a unique curve of class $L_1^{v_1}$ and a unique curve of class $L_2^{v_1}$. These two curves are exactly the two lines in $\mathbb{P}^3$ contained in $Y$ that pass through the node $x_1$.
\end{lemma}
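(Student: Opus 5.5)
Work on the minimal resolution $\pi\colon Y''\to Y$, using the basis of $\Pic(Y'')$ from the proof of Lemma \ref{lem_classification}. A curve on $Y$ not contained in the node set is the image of its strict transform in $Y''$. So a curve of class $\beta$ on $Y$ corresponds to an integral curve in $Y''$ whose class $\beta''$ satisfies $\pi_*\beta''=\beta$ and is not one of the $E''_i$.

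\emph{Existence.} For $L_1^{v_1}=E_1$, the curve $E''_{11}$ is an irreducible $(-1)$-curve with image $E_1$, so a curve exists. For $L_2^{v_1}$, consider the conic in $\mathbb P^2$ through $p_2,p_3$ tangent to $\overline D_2,\overline D_3$ there, and passing through $p_1$. These are five linear conditions, and the $(-1)$-curve class
\[
\beta''=2H''-E''_1-E''_{11}-E''_2-2E''_{22}-E''_3-2E''_{33}
\]
has $\beta''^2=-1$ and $K\cdot\beta''=-1$, so Riemann--Roch gives $h^0\ge1$.

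\emph{Irreducibility of that conic.} I would check that a member of this class is irreducible: a reducible decomposition would force a line through two of the points $p_i$ that is tangent to some $\overline D_j$, which is impossible since the $p_i$ are non-collinear and lie off the vertices. Its image in $Y$ then has class $2H-E_1-2E_2-2E_3=L_2^{v_1}$.

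\emph{Uniqueness.} Suppose $C$ is an integral curve on $Y$ of class $L_1^{v_1}$ or $L_2^{v_1}$. Then $C\cdot D_1=1$ and $C\cdot D_2=C\cdot D_3=0$, so $C$ falls under the hypotheses of Lemma \ref{lem_classification} with $k=1$. Cases 4 and 3 of that proof show that the strict transform has class $E''_{11}$, $E''_1+E''_{11}$, or $2H''-b_1E''_1-E''_{11}-E''_2-2E''_{22}-E''_3-2E''_{33}$ with $b_1\in\{0,1\}$.

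\emph{Excluding the alternatives.} The class $E''_1+E''_{11}$ is excluded because an integral curve $\Gamma$ in that class would satisfy $\Gamma\cdot E''_{11}=(-2+1)+(1-1)\cdot\ldots$. More simply, $(E''_1+E''_{11})\cdot E''_{11}=1-1=0$ and $(E''_1+E''_{11})\cdot E''_1=-2+1=-1<0$, so $E''_1$ would be a component, contradicting integrality. The $b_1=0$ conic is excluded the same way: its intersection with $E''_1$ is $-1$.

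\emph{Rigidity of the remaining classes.} The remaining classes $E''_{11}$ and the $b_1=1$ conic class are $(-1)$-classes. Two distinct integral curves in the same class would intersect nonnegatively, contradicting self-intersection $-1$. Hence each class has exactly one representative.

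\emph{Identification with lines.} Both curves have degree $-K_Y\cdot C=D\cdot C=1$ in the anticanonical embedding $Y\subset\mathbb P^3$, so they are lines. They pass through $x_1$ because their strict transforms meet $E''_1$: $E''_{11}\cdot E''_1=1$, and likewise $\beta''\cdot E''_1=1$ for the conic class. They are distinct from each other, and by the line count in \S2 there are exactly two lines through $x_1$ meeting only that node.

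\emph{Expected main obstacle.} The delicate step is proving that the conic class on $Y''$ is represented by an irreducible curve. This requires careful use of the genericity of the $p_i$ to rule out degenerate conics. Failing a direct argument, I would appeal to the line configuration from \cite{dolgachev_cubic} recalled in \S2, which already asserts a line of class $L_2^{v_1}$.
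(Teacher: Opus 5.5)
Your proof is correct, but your uniqueness argument follows a different route from the paper's.

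\textbf{Comparison with the paper.} The paper argues directly in $\mathbb{P}^2$. A curve of class $E_1$ is the exceptional curve over $p_1$. A curve of class $L_2^{v_1}$ maps to a plane conic through $p_1$ that is tangent to $\overline D_2$ at $p_2$ and to $\overline D_3$ at $p_3$. These five linear conditions cut out a single conic: in the coordinates of the proofs of Lemmas~\ref{lem_conics} and~\ref{Lemma 4.3}, it is the member of the pencil $Q_{a,d}$ with $a(u^2+v^2)+duv=0$.

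You instead pin down the class of the strict transform on $Y''$ via Cases~3 and~4 of Lemma~\ref{lem_classification}. You then discard $E''_1+E''_{11}$ and the $b_1=0$ class because they meet $E''_1$ negatively, and conclude by rigidity of $(-1)$-classes. This is more intrinsic, and it makes explicit a point the paper leaves implicit: the class on $Y$ determines the class of the strict transform only up to the $E''_i$. Your exclusion step is genuinely needed, since $E''_1+E''_{11}$ and the $b_1=0$ class are themselves numerical $(-1)$-classes. Your route also proves existence and the identification with lines, which the paper simply takes from \S\ref{Section 2}. The price is reliance on the heavier adjunction analysis of Lemma~\ref{lem_classification}.

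\textbf{First correction: irreducibility.} The step you flag as the main obstacle is easy, and your stated reason for it is not the right one. The only singular members of the pencil $Q_{a,d}$ are:
\begin{itemize}
\item $xy=0$, i.e.\ $\overline D_2\cup\overline D_3$, which misses $p_1$ because $p_1$ is not a vertex;
\item the double line $(x+y-z)^2=0$ through $p_2,p_3$, which misses $p_1$ because the $p_i$ are not collinear.
\end{itemize}
There is no line through two of the $p_i$ that is tangent to some $\overline D_j$ here. More simply, $-K_Y=D$ is the hyperplane class of $Y\subset\mathbb{P}^3$. Any effective cycle of class $L_2^{v_1}$ therefore has degree $D\cdot L_2^{v_1}=1$, so it is automatically a single reduced line. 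Pushing forward the member of $|\beta''|$ given by Riemann--Roch then yields existence with no irreducibility check at all.

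\textbf{Second correction: the identification with lines.} Four lines pass through $x_1$: besides your two, there are the lines of class $H-E_1-E_2$ and $H-E_1-E_3$. To match your curves with the two lines through $x_1$ containing no other node, you must also check that they miss $x_2$ and $x_3$. This follows from $E''_{11}\cdot E''_j=0$ and $\beta''\cdot E''_j=0$ for $j=2,3$.
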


\begin{proof}
We use the presentation of the minimal resolution $Y''$ of $Y$ as a
sequence of blow-ups of $\mathbb{P}^2$. The uniqueness of the curve in
the class $L_1^{v_1}=E_1$ follows from the uniqueness of the
corresponding exceptional divisor. Under the blow-down to
$\mathbb{P}^2$, a curve in the class
$L_2^{v_1}=2H-E_1-2E_2-2E_3$
is the strict transform of a conic satisfying the incidence and
tangency conditions imposed by the blow-up centers: it is tangent to
two prescribed lines at prescribed points and passes through a third
prescribed point. These conditions determine the conic uniquely.
\end{proof}

\begin{lemma} \label{lem_bad_line}
On $Y$, there exists a unique curve of class $H-E_2-E_3$. This curve is the unique line in $\mathbb{P}^3$ that passes through the nodes $x_2$ and $x_3$.
\end{lemma}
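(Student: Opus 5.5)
Following the proof of Lemma \ref{lem_lines}, we work on the minimal resolution $\pi\colon Y''\to Y$ and translate the problem into plane geometry. By Case 2 of the proof of Lemma \ref{lem_classification}, any integral curve $C\subset Y$ of class $H-E_2-E_3$ has strict transform $C''\subset Y''$ with numerical class
\[
H''-b_2E''_2-E''_{22}-b_3E''_3-E''_{33}, \qquad b_2,b_3\in\{0,1\},
\]
and with $b_1=0$, $b_{11}=0$. In particular, $C''$ maps isomorphically onto a line $\ell\subset\mathbb{P}^2$. The coefficients of $E''_{22}$ and $E''_{33}$ show that $\ell$ passes through $p_2$ and $p_3$.

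Since $p_2\neq p_3$, because they lie on distinct sides of the triangle away from the vertices, $\ell$ is forced to be the line $\overline{p_2p_3}$. Conversely, the strict transform of $\overline{p_2p_3}$ is an integral curve. Its image in $Y$ has class $\pi_*$ of a class of the above form, which equals $H-E_2-E_3$ as in the proof of Lemma \ref{lem_classification}. For existence, I would compute the class directly. The line $\overline{p_2p_3}$ is not tangent to $\overline{D}_2$ at $p_2$, since $\overline{p_2p_3}\neq\overline{D}_2$: otherwise $p_3\in\overline{D}_2\cap\overline{D}_3$ would be a vertex. Hence its strict transform in $Y'$ does not meet $E'_2\cap D'_2$, and similarly for $p_3$. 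It therefore has class $H''-E''_2-E''_{22}-E''_3-E''_{33}$ after the second blow-up, using the convention that $E''_{ii}$ denotes the total pullback of $E'_i$ minus the exceptional curve. Either way, the pushforward is $H-E_2-E_3$.

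Uniqueness of the integral curve follows from uniqueness of $\ell$. A general (possibly reducible) effective curve in this class is not needed, since the classification lemma concerns integral curves. I would nonetheless remark that $H-E_2-E_3$ cannot split into effective pieces meeting $D$ compatibly, or simply restrict the statement to integral curves.

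For the geometric identification, the strict transform meets $E''_2$ and $E''_3$ with intersection $1$ each. This follows from $(H''-E''_2-E''_{22}-E''_3-E''_{33})\cdot E''_2 = 1-(-2)\cdot\ldots$, which I would check using $E''_2\cdot E''_{22}=1$ and $(E''_2)^2=-2$ in the chosen basis. Hence its image in $Y$ passes through $x_2$ and $x_3$. Its degree in $\mathbb{P}^3$ is $-K_Y\cdot\beta = D\cdot(H-E_2-E_3)=1$, since $\beta\cdot D_1=1$ and $\beta\cdot D_2=\beta\cdot D_3=0$. So it is a line through $x_2$ and $x_3$. A line through two nodes is unique because two points determine a line, which matches the count recalled from \cite{dolgachev_cubic}.

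The main obstacle is bookkeeping of the basis $E''_i, E''_{ii}$, namely confirming which of the four numerical possibilities the strict transform actually realizes and that it meets the $(-2)$-curves. This only affects the node-incidence claim, not uniqueness.
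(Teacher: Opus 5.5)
Your proposal is correct and takes essentially the paper's approach: via the blow-up presentation of $Y''$, an integral curve of class $H-E_2-E_3$ is the strict transform of a line in $\mathbb{P}^2$ through $p_2$ and $p_3$, and such a line is unique. Your extra check supplies the identification with the line through $x_2,x_3$, which the paper leaves implicit: the curve meets $E''_2$ and $E''_3$, and it has degree $D\cdot(H-E_2-E_3)=1$ in $\mathbb{P}^3$. Two small corrections: the intersection number with $E''_2$ is $0+2-1=1$, and your description of the convention for $E''_{ii}$ is reversed relative to the paper, though the class $H''-E''_2-E''_{22}-E''_3-E''_{33}$ is symmetric in the two labels, so nothing changes.
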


\begin{proof}
We use the presentation of the minimal resolution $Y''$ of $Y$ as a
sequence of blow-ups of $\mathbb{P}^2$. The uniqueness of the curve in
the class $H-E_2-E_3$ follows from the uniqueness of a line passing through two distinct points in $\mathbb{P}^2$. 
\end{proof}

\begin{lemma} \label{lem_conics} 
Assume that the orbifold cubic surface $Y$ is general in its deformation class.
Then, there are exactly two curves on $Y$ of class $C^{v_1}$ that intersect $D_1$ at a single point: a unique smooth conic and the double of the line of class $H-E_2-E_3$ that passes through the nodes $x_2$ and $x_3$. 
\end{lemma}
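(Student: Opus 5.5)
We work on the minimal resolution $\pi\colon Y''\to Y$ and describe curves of class $C^{v_1}=2H-2E_2-2E_3$ as plane conics. A curve $Z\subset Y$ of class $C^{v_1}$ either has an integral component not contracted to a point, or is a sum of such components. We first treat the integral case. By Case 1 in the proof of Lemma~\ref{lem_classification}, an integral curve of class $C^{v_1}$ has strict transform of class
\[
\beta''=2H''-E''_2-2E''_{22}-E''_3-2E''_{33}.
\]
Its image in $\mathbb{P}^2$ is a conic tangent to $\overline{D}_2$ at $p_2$ and to $\overline{D}_3$ at $p_3$, since it is tangent to the direction blown up at each of these points. It is not required to pass through $p_1$. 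The condition ``meets $D_1$ at a single point'' says that the conic is tangent to $\overline{D}_1$, or equivalently $\beta''\cdot D_1''=2$ concentrated at one point.

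Conics tangent to two lines at prescribed points form a pencil, spanned by $\ell_{23}^2$ and $\overline{D}_2\overline{D}_3$, where $\ell_{23}$ is the line through $p_2$ and $p_3$. So the family is $\{\lambda \ell_{23}^2+\mu \overline{D}_2\overline{D}_3\}$. Restricting to $\overline{D}_1\cong\mathbb{P}^1$ gives a pencil of binary quadrics, and tangency to $\overline{D}_1$ is a vanishing discriminant. This discriminant is a quadratic form in $(\lambda:\mu)$, so it has two roots counted with multiplicity. We will identify them directly.

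\begin{itemize}
\item $\mu=0$: the double line $\ell_{23}^2$ meets $\overline{D}_1$ in one point with multiplicity $2$. It pulls back to twice the line of class $H-E_2-E_3$, which is the unique such curve by Lemma~\ref{lem_bad_line}.
\item The other root: we check that the discriminant is not a perfect square, so the second root is distinct from $\mu=0$. We also check that this root is neither $\lambda=0$ (the reducible conic $\overline{D}_2\overline{D}_3$, which meets $\overline{D}_1$ at two vertices) nor a conic singular at its tangency point. In coordinates where $\overline{D}_i=\{x_i=0\}$, one computes explicitly that the second root gives a smooth conic for general $p_1,p_2,p_3$. This is where we use the generality assumption.
\end{itemize}

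The main computational step is this explicit discriminant calculation. We would normalize $p_2=(1:0:1)$ and $p_3=(1:1:0)$ by the torus action, leaving only the position of $\overline{D}_1$ relative to $\ell_{23}$ as a free parameter.

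Finally, we exclude the remaining possibilities.
\begin{itemize}
\item Any other curve of class $C^{v_1}$ is reducible. Its components have classes $\gamma$ with $\gamma\cdot D_2=\gamma\cdot D_3=0$ and $\gamma\cdot D_1\in\{0,1,2\}$.
\item Components with $\gamma\cdot D=0$ must avoid $D$. Since $Y\setminus D$ is affine, no complete curve lies in it, so such components do not exist.
\item Components with $\gamma\cdot D_1=1$ are listed by Lemma~\ref{lem_classification}. Components must meet $D_1$ at the same point, and the sum of classes must be $C^{v_1}$. The only option is $(H-E_2-E_3)+(H-E_2-E_3)$, because $L_1^{v_1}+L_2^{v_1}=2H-2E_2-2E_3$ meets $D_1$ at two distinct points. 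This needs checking: $E_1$ and the conic component meet $D_1$ at distinct points for general $Y$, since $E_1$ meets $D_1$ at the image of $p_1$ and the conic passes through $p_1$ transversally to $\overline{D}_1$ there.
\item The combination $(H-E_2-E_3)+L_i^{v_1}$ does not give the right class.
\end{itemize}

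This leaves exactly the two curves in the statement.
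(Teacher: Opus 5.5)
You have a small but genuine gap: the generality hypothesis is used in the wrong place, and one existence check is missing as a result.

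Your core computation is the paper's. It uses the pencil $\lambda\ell_{23}^2+\mu\,\overline{D}_2\overline{D}_3$, whose restriction to $\overline{D}_1$ has discriminant $\mu(4\lambda+\mu)$. This gives the double line and the conic $Q=\{x_1^2+x_2^2+x_3^2-2x_1x_2-2x_1x_3-2x_2x_3=0\}$ in your coordinates. Splitting into integral and reducible curves, and excluding $L_1^{v_1}+L_2^{v_1}$ via Lemma~\ref{lem_classification}, is a fine, more explicit alternative to the paper's treatment of all curves of class $C^{v_1}$ as members of the pencil.

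After the torus normalization nothing in the pencil is free. The triangle and $\ell_{23}$ are fixed, and the only modulus, the position of $p_1$, does not enter the pencil. So $Q$ is smooth for every $Y$ (its symmetric matrix has determinant $-4$), and generality plays no role there. What you never verify is that $Q$ avoids $p_1$, and your own integral case needs this. Case 1 of Lemma~\ref{lem_classification} forces $b_1=b_{11}=0$, so the plane image of an integral curve of class $C^{v_1}$ cannot pass through $p_1$.

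Since $Q$ meets $\overline{D}_1$ only at $(0:1:1)$, take $p_1=(0:1:1)$. Then $\widetilde{Q}$ has class $\sD=2H-2E_1-2E_2-2E_3$ and is disjoint from $D$. The corresponding member of the pencil is the reducible curve $\widetilde{Q}+2E_1$, and there is no smooth conic of class $C^{v_1}$ at all. This is exactly where the paper invokes generality: $p_1$ must differ from the point where $Q$ touches $\overline{D}_1$. You could equally close the gap with the affineness argument you already use, since in that position $\widetilde{Q}$ would be a complete curve in $Y\setminus D$.

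There is also a smaller inaccuracy. Your claim that every component $\gamma$ of a reducible curve of class $C^{v_1}$ has $\gamma\cdot D_2=\gamma\cdot D_3=0$ is false for components contained in $D$. For example, $D_2+D_3$ has class $C^{v_1}$ while $D_2\cdot D_2=-1$. You do meet $D_2+D_3$ as the $\lambda=0$ member, but the component analysis should dispose of boundary components explicitly. A component $D_2$ or $D_3$ forces the curve to be $D_2+D_3$, since $D_3$ is the only effective curve of class $H-2E_3$, and that curve meets $D_1$ at two vertices. $D_1$ cannot be a component, since $C^{v_1}-D_1=H+2E_1-2E_2-2E_3$ is not effective.
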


\begin{proof}
Since
$C^{v_1}=2H-2E_2-2E_3$, 
under the birational morphism to $\mathbb P^2$, curves of class $C^{v_1}$
correspond to plane conics tangent to $\overline D_2$ at $p_2$ and to
$\overline D_3$ at $p_3$. These conics form a pencil.

Choose homogeneous coordinates $[x:y:z]$ on $\mathbb P^2$ such that
\[
\overline D_1=\{z=0\},\qquad
\overline D_2=\{y=0\},\qquad
\overline D_3=\{x=0\}.
\]
Using the torus action preserving the boundary triangle, we may moreover
assume that
$p_2=[1:0:1]$, $p_3=[0:1:1]$.
Writing a general conic as
\[
Q=ax^2+by^2+cz^2+dxy+eyz+fxz,
\]
the conditions that $Q$ be tangent to $\overline D_2$ at $p_2$ and to
$\overline D_3$ at $p_3$ give
$a=b=c$, $e=f=-2a$.
Thus the pencil is
\[
Q_{a,d}
=
a(x^2+y^2+z^2-2xz-2yz)+dxy,
\qquad [a:d]\in\mathbb P^1.
\]

Restricting to $\overline D_1$, we obtain
\[
Q_{a,d}|_{\overline D_1}=ax^2+dxy+ay^2.
\]
Hence $Q_{a,d}$ intersects $\overline D_1$ at a single point if and only
if this binary quadratic form has a double root, or equivalently
$d^2-4a^2=0$.
There are therefore exactly two such members of the pencil.

For $d=2a$, we have
\[
Q_{a,2a}=a(x+y-z)^2.
\]
Thus this member is twice the line through $p_2$ and $p_3$. Its strict
transform on $Y$ has class $H-E_2-E_3$, and hence, by Lemma~3.6, is the
line through the nodes $x_2$ and $x_3$.

For $d=-2a$, after rescaling $a=1$, we obtain
\[
Q_{1,-2}
=
x^2+y^2+z^2-2xy-2xz-2yz.
\]
This conic is smooth, since the determinant of its associated symmetric
matrix
\[
\begin{pmatrix}
1 & -1 & -1\\
-1 & 1 & -1\\
-1 & -1 & 1
\end{pmatrix}
\]
is $-4$. Thus the second member is a unique smooth conic. For general $Y$, the point to blow up on $\overline{D}_1$ is distinct from the intersection point of this smooth conic with $\overline{D}_1$, and so the result follows.
\end{proof}

\begin{lemma}
	\label{Lemma 4.3} Assume that the orbifold cubic surface $Y$ is general in its deformation class.
    Let $f : C \to Y$ be an orbifold stable log map, defining a point in $\overline{\mathcal{M}}^{\beta}_{0,v}(Y, D)$. Then, the image curve $f(C)$ is irreducible.  
 \end{lemma}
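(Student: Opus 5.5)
The plan is to adapt the argument of \cite[Lemma 3.2]{GHKScubic} (and \cite{GHK1}), which uses tropical/balancing considerations along the boundary. Here $v$ is presumably $kv_1$ (the case relevant to $f_{\rho_1}$), and by symmetry the same works for $v_2,v_3$.

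\textbf{Step 1: contact with $D$.} Write $f(C)=\sum_j m_j C_j$ with $C_j$ integral curves in $Y$. Since $f$ has a single marked point of contact order $v$, the preimage $f^{-1}(D)$ as a log map consists of the marked point together with possibly components mapping into $D$. We treat two cases.

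\begin{itemize}
\item \emph{No component maps into $D$.} Then each $C_j$ not contained in $D$ meets $D$ only at the single point $f(p)\in D_1^\circ$, the interior of $D_1$ when $v=kv_1$. Hence $C_j\cdot D_2=C_j\cdot D_3=0$ and $C_j\cdot D_1\ge 0$. A curve in $Y$ disjoint from $D$ is impossible: $D$ supports an ample divisor, since the complement $Y\setminus D$ is affine, so $C_j\cdot D>0$. Thus every $C_j$ meets $D_1$ at the same point $f(p)$, and each has class as listed in Lemma~\ref{lem_classification}.
\item \emph{Some components map into $D$.} Here we would invoke the standard tropical argument. The tropicalization of $f$ is a balanced tropical curve in $(B,\Sigma)$ with one unbounded leg of direction $v$. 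Components mapping into $D_i$ correspond to vertices on $\rho_i$. Balancing at the vertex furthest from the origin, together with $D_i^2$ data, forces contradictions except in configurations where all non-boundary components again meet $D$ only at one point of $D_1$. Since $D$ itself, being a triangle, is not in a class of the type allowed, components in $D$ would have to carry the whole class. This is the step I expect to be the main obstacle, and I would copy the convexity argument from \cite{GHKScubic}: the function $\sum$ (distance to origin) along a tropical curve with a single outgoing leg cannot have a local maximum off the leg. This forces all vertices onto the ray $\rho_1$, through the origin only in a degenerate way. Since the orbifold points are disjoint from $D$, this part is identical to the smooth case.
\end{itemize}

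\textbf{Step 2: finitely many candidates, all through one point.} By Lemma~\ref{lem_classification}, each component is one of $L_1^{v_1}, L_2^{v_1}, H-E_2-E_3$ (meeting $D_1$ once), or $C^{v_1}$ (meeting $D_1$ with tangency order $2$ at one point). By Lemmas~\ref{lem_lines}, \ref{lem_bad_line} and \ref{lem_conics}, these are finitely many explicit curves: two lines through $x_1$, the line through $x_2,x_3$, and the smooth conic. Their points of intersection with $D_1$ are explicit. $L_1^{v_1}=E_1$ meets $D_1$ at the point coming from $p_1$. In the coordinates of Lemma~\ref{lem_conics}, the line $x+y=z$ meets $\overline D_1$ at $[1:-1:0]$, and the smooth conic meets it at $[1:1:0]$. $L_2^{v_1}$ meets $D_1$ at the point over $p_1$ but as a transverse point away from $E_1\cap D_1$. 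In fact, since $D_1=H-2E_1$, $L_2\cdot D_1=1$ and $E_1\cdot D_1=1$, so one must check whether they meet $D_1$ at the same point.

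\textbf{Step 3: genericity.} For $Y$ general, that is, $p_1$ general on $\overline D_1$, I would verify that these intersection points with $D_1$ are pairwise distinct. $E_1$ and $L_2^{v_1}$ meet $D_1$ at distinct points: on $Y'$ the conic through $p_1$ is transverse to $\overline D_1$, while $E_1$ corresponds to the tangent direction of $\overline D_1$ at $p_1$. The point $p_1$ is chosen away from $[1:\pm1:0]$. Hence all components of $f(C)$, which must share the single point $f(p)$, coincide. So $f(C)$ is irreducible, being a multiple of a single integral curve.
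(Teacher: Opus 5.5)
\textbf{Verdict: same approach as the paper, but with an error in the explicit genericity check.}

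Your strategy matches the paper's. First, the tropical/balancing argument from the smooth cubic case (the paper cites \cite[Lemmas 4.9 and 4.10]{bousseau_skein}) shows that every non-contracted component of $C$ maps onto one of the curves of Lemmas \ref{lem_lines}, \ref{lem_bad_line}, \ref{lem_conics}, all through the single contact point on $D_1$. This carries over unchanged because the tropicalization is identical and the nodes avoid $D$. Second, one shows that for general $Y$ these curves meet $D_1$ at pairwise distinct points. Your sketch of the tropical step is muddled. What you actually need from it is that only contracted components can map into $D$. The sentence about components in $D$ ``carrying the whole class'' is not meaningful. Still, like the paper, you are essentially deferring this step to the literature.

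The concrete error is in Steps 2--3, the one part that must actually be computed. You place $L_2^{v_1}\cap D_1$ ``at the point over $p_1$, away from $E_1\cap D_1$''. This cannot happen: $D_1\to\overline D_1$ is an isomorphism, and the only point of $D_1$ over $p_1$ is $E_1\cap D_1$.

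Here is what actually happens. The conic $\overline L_2$ is the member of the pencil $Q_{a,d}$ through $p_1=[u:v:0]$. It meets $\overline D_1$ at $p_1$ and at a residual point. The intersection at $p_1$ is separated by the blow-up; your transversality remark correctly shows exactly this, i.e.\ $L_2^{v_1}\cap D_1\neq E_1\cap D_1$. So $L_2^{v_1}$ meets $D_1$ at the residual point. Since $Q_{a,d}|_{\overline D_1}=ax^2+dxy+ay^2$ has product of roots equal to $1$, that point is $[v:u:0]$.

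Consequently, your final check $p_1\neq[1:\pm1:0]$ does not cover the pairs $(L_2^{v_1},\,H-E_2-E_3)$ and $(L_2^{v_1},\,C^{v_1})$. For those you need $[v:u:0]\neq[1:\pm1:0]$. Fortunately this is the same condition $u\neq\pm v$. It is also exactly the condition that $\overline L_2$ be transverse to $\overline D_1$ at $p_1$, which your $E_1$ versus $L_2^{v_1}$ argument silently assumes.

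Once the four contact points are correctly identified as $[u:v:0]$, $[v:u:0]$, $[1:-1:0]$, $[1:1:0]$, they are pairwise distinct for $u\neq\pm v$. The argument then concludes exactly as in the paper.
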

\begin{proof}
We first show that, for a general choice of $Y$, the curves appearing in Lemmas \ref{lem_lines}, \ref{lem_bad_line}, \ref{lem_conics} meet $D_1$ at distinct
points.
As before, we use the presentation of the minimal resolution of $Y$ as a sequence of blow-ups of $\mathbb{P}^2$. 
Let
$\overline D_1,\overline D_2,\overline D_3\subset\mathbb{P}^2$
be the toric boundary. Choose homogeneous coordinates $[x:y:z]$ on
$\mathbb{P}^2$ such that
\[
\overline D_1=\{z=0\},\qquad
\overline D_2=\{y=0\},\qquad
\overline D_3=\{x=0\}.
\]
Using the torus action preserving the toric boundary, we may assume
$
p_2=[1:0:1]$, $ p_3=[0:1:1]$, 
and write
$p_1=[u:v:0]\in\overline D_1$, 
where $uv\neq0$.
We determine on $\mathbb{P}^2$ the points of $\overline D_1$ corresponding
to the points at which the relevant curves on $Y$ meet $D_1$.

The curve of class $L_1^{v_1}=E_1$ is exceptional over $p_1$.
Consequently, its intersection point with $D_1$ lies over
$p_1=[u:v:0]$.
The curve of class
$L_2^{v_1}=2H-E_1-2E_2-2E_3$
is the strict transform of the unique plane conic $\overline L_2$
which passes through $p_1$ and is tangent to $\overline D_2$ at
$p_2$ and to $\overline D_3$ at $p_3$. As in the proof of
Lemma \ref{lem_conics}, the pencil of plane conics satisfying the latter two
tangency conditions is
\[
Q_{a,d}
=
a(x^2+y^2+z^2-2xz-2yz)+dxy.
\]
Since $\overline L_2$ passes through $p_1$, we have
$a(u^2+v^2)+duv=0$.
Its intersection with $\overline D_1$ is determined by
$ax^2+dxy+ay^2=0$.
One root is $[u:v]$. Since the product of the two roots is $1$, the
other is $[v:u]$. The first intersection is removed by the blowup at
$p_1$, while the second remains the intersection with the strict
transform of $\overline D_1$. Hence the curve of class
$L^{v_1}_2$ meets $D_1$ at the point corresponding to
\[
[v:u:0]\in\overline D_1.
\]

The curve of class
$H-E_2-E_3$ 
is the strict transform of the line $\overline M\subset\mathbb{P}^2$ through
$p_2$ and $p_3$. Since
$\overline M=\{x+y-z=0\}$,
we have
$
\overline M\cap\overline D_1=\{[1:-1:0]\}$. 
Thus the corresponding curve on $Y$ meets $D_1$ at the point
corresponding to $[1:-1:0]$.

Finally, the smooth conic of class $C^{v_1}$ from Lemma \ref{lem_conics} is the
strict transform of
\[
\overline Q
=
\{x^2+y^2+z^2-2xy-2xz-2yz=0\}.
\]
Since
$\overline Q|_{\overline D_1}=(x-y)^2$, 
it is tangent to $\overline D_1$ at
$[1:1:0]$.
For $Y$ general enough this point is different from $p_1$, so the
strict transform of $\overline Q$ meets $D_1$ at the corresponding
point of $Y$.

Thus, after identifying $D_1$ with $\overline D_1$ away from the
blow-up center $p_1$, the four relevant contact points are represented
by
\[
[u:v:0],\qquad
[v:u:0],\qquad
[1:-1:0],\qquad
[1:1:0].
\]
They are pairwise distinct provided
\[
u\neq v,\qquad u\neq -v.
\]
These are proper closed conditions on the choice of
$p_1\in\overline D_1$. Hence, for $Y$ general enough, any two
distinct curves occurring in Lemmas \ref{lem_lines}, \ref{lem_bad_line}, \ref{lem_conics} meet $D_1$ at
distinct points.

Now let $f\colon C\to Y$ be as in the statement. By the arguments
of \cite[Lemmas~4.9 and~4.10]{bousseau_skein}, formulated for smooth cubic surfaces but which also apply to orbifold cubic surfaces since the tropicalizations are identical, together with Lemma \ref{lem_classification}, every non-contracted
irreducible component of $C$ maps onto one of the curves
described in Lemmas \ref{lem_lines}, \ref{lem_bad_line}, \ref{lem_conics}. Moreover, since $f$ has a single point
of contact with the boundary, the images of all non-contracted
components meet $D_1$ at the same point.
By the pairwise distinctness proved above, these images cannot have
distinct supports. Thus all non-contracted components of $C$ have
the same image in $Y$. Therefore, $f(C)$ is a single
irreducible curve.

\end{proof}

Next, we calculate the orbifold log Gromov--Witten invariants associated to each of the cases identified above.

\begin{lemma} \label{lem_line}
We have 
\[ \exp \left( \sum_{k \geq 1} k N_{0, kv_1}^{k L_1^{v_1}} t^{k L_1^{v_1}} z^{-kv_1}\right) = 1 + t^{2 L_1^{v_1}} z^{-2v_1} \]
and 
\[ \exp \left( \sum_{k \geq 1} k N_{0, kv_1}^{k L_2^{v_1}} t^{k L_2^{v_1}} z^{-kv_1}\right) = 1 + t^{2 L_2^{v_1}} z^{-2v_1}\,. \]
\end{lemma}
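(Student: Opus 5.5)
The plan is to reduce the statement to a local orbifold computation near the line and then evaluate that computation by comparison with the minimal resolution $\pi\colon Y''\to Y$. By Lemma \ref{Lemma 4.3}, every orbifold stable log map contributing to $N^{kL_1^{v_1}}_{0,kv_1}$ has image equal to the unique curve of class $L_1^{v_1}$ from Lemma \ref{lem_lines}, and the same holds for $L_2^{v_1}$. So both invariants localize to an analytic (or formal) neighbourhood of a single line $L$ with the following features:
\begin{itemize}
\item $L$ passes through exactly one node, say $x_1$;
\item $L$ meets $D$ transversally at one point of $D_1$;
\item $L\cdot L=-\frac12$.
\end{itemize}
The two neighbourhoods (of $L_1^{v_1}$ and $L_2^{v_1}$) have the same local invariants: the orbifold line is a football $\mathbb{P}^1$ with one $\mathbb{Z}/2$ point, its normal data is determined by $L^2=-\frac12$, and there is one transverse contact with $D_1$. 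The invariants of a neighbourhood of $L$ should therefore depend only on this local model, and it suffices to treat $L=L_1^{v_1}=E_1$. Concretely, I would deform the pair $(Y,D)$, which is allowed by deformation invariance of the virtual class, so as to compare with a model surface containing such a neighbourhood. In that model all invariants of classes $kL$ with contact order $kv_1$ coincide.

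To compute the local invariants, I would pass to the resolution $Y''$. There the line pulls back to the chain $E''_{11}\cup E''_1$, consisting of a $(-1)$-curve meeting $D''_1$ and a $(-2)$-curve disjoint from $D''$. This is consistent with Case 4 of Lemma \ref{lem_classification}, where exactly the two classes $E''_{11}$ and $E''_1+E''_{11}$ appear. On the smooth side, the log invariants of this chain are known from the blow-up description in \cite{GPS}: blowing up twice at infinitely near points on $\overline D_1$ gives
\[
\exp\Big(\sum_k k N''_k\,\cdot\Big)=(1+x)(1+sx), \qquad x=t^{E''_{11}}z^{-v_1},\quad s=t^{E''_1}.
\]
Next I would invoke the genus zero crepant resolution correspondence for the $A_1$ singularity. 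This is legitimate because the node is away from $D$, so the log and orbifold structures decouple as in \cite[\S5.5]{GPS}. Under the correspondence, the orbifold generating function is obtained by specialising the exceptional variable $s$ to $-1$, together with a compensating root of unity on the untwisted-sector variable $x$. This yields $1\pm x^2$.

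The vanishing of the odd-$k$ invariants would be checked directly, as a consistency check on the sign conventions. A degree-$k$ cover of the football totally ramified over the point of $D_1$ must carry a twisted marking over the $\mathbb{Z}/2$ point, in order to be representable, exactly when $k$ is odd. Since our moduli space has no twisted markings, those invariants vanish. For even $k=2m$, the contributions $N^{2mL}_{0,2mv_1}=\frac{(-1)^{m+1}}{2m^2}$ should match $\log(1+x^2)$. Alternatively, one can compute them by identifying the local model with the $\mathbb{Z}/2$-quotient of a neighbourhood of a $(-1)$-line, whose image line has contact order $2$ with the boundary. The standard multiple cover formula $(-1)^{m+1}/m^2$ then applies, corrected by the factor $\frac12$ from the automorphism.

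The main obstacle is pinning down the sign, that is, whether one obtains $1+x^2$ or $1-x^2$. This depends on the precise root-of-unity substitution in the crepant resolution correspondence and on orientation conventions for the twisted sectors. I would resolve it by the direct double-cover computation of the lowest term $k=2$. There the moduli space is a single point, the double cover of the football branched at the stacky point and at the contact point, and the only nontrivial work is to evaluate the obstruction-bundle contribution and the $\mathbb{Z}/2$ automorphism factor, expecting $N^{2L}_{0,2v_1}=\frac12$.
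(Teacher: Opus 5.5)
Your proof is correct, but it takes a different route from the paper. After localizing to the line with Lemmas \ref{lem_classification} and \ref{lem_lines}, the paper simply quotes \cite[Proposition 5.7]{GPS} for the relative orbifold invariants of the football with one $\mathbb{Z}/2$ point.

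Of your two computations, the crepant-resolution comparison with $Y''$ is the weak one. A genus-zero crepant resolution correspondence for log invariants of this global orbifold pair is not something you can cite, and it is the only source of your sign worry. The worry disappears once the untwisted Novikov variable is identified through the $\mathbb{Q}$-pullback $\pi^*E_1=E''_{11}+\tfrac12E''_1$. Setting $y=t^{E_1}z^{-v_1}$, one has $x=ys^{-1/2}$ and $sx=ys^{1/2}$. So $s\to-1$ forces $s^{1/2}\to\pm i$, and $(1+x)(1+sx)\to(1\mp iy)(1\pm iy)=1+y^2$.

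More importantly, your $\mathbb{Z}/2$-quotient argument is a complete proof on its own and already fixes every sign, so there is no ``main obstacle'' left.
\begin{itemize}
\item Take the double cover of a neighbourhood of $L$ that is branched along $D_1$ and \'{e}tale over the node. It exists because a loop around $D_1$ generates the orbifold fundamental group $\mathbb{Z}/2$ of $L\setminus D_1$.
\item Upstairs, $\tilde L^2=2L^2=-1$ and $\tilde L$ meets $\tilde D_1$ transversally.
\item For $k$ odd the moduli space is empty, as you say.
\item For $k=2m$, the pulled-back double cover of the genus-zero domain is \'{e}tale (even contact order, no twisted points), hence split. So each map has exactly two lifts, each of degree $m$ and contact order $m$.
\item The Kummer cover is log \'{e}tale, so the obstruction theories match. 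The $w=1$ case of \cite[Proposition 6.1]{GPS} then gives $N^{2mL}_{0,2mv_1}=\tfrac12\cdot\tfrac{(-1)^{m+1}}{m^2}$.
\end{itemize}
In effect you rederive the cited orbifold multiple-cover formula from the smooth $(-1)$-curve one. The paper's citation is shorter. Your cover argument is self-contained given the smooth formula, and it explains both the vanishing in odd degree and the factor $\tfrac12$.
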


\begin{proof}
By Lemmas \ref{lem_classification} and \ref{lem_lines}, every orbifold stable log map contributing to $N_{0, kv_1}^{k L_1^{v_1}}$ factors through the unique line of class $L_1^{v_1}$. This line contains exactly one $\mathbb{Z}/2\mathbb{Z}$ orbifold point and intersects $D$ transversely at a single point. Therefore, the calculation of $N_{0, kv_1}^{k L_1^{v_1}}$ reduces to a calculation in relative orbifold Gromov-Witten theory of the stacky $\mathbb{P}^1$ with a $\mathbb{Z}/2\mathbb{Z}$ orbifold point, and so \cite[Proposition 5.7]{GPS} implies that $N_{0, kv_1}^{k L_1^{v_1}}=0$ if $k$ is odd, and
$N_{0, kv_1}^{k L_1^{v_1}} = \frac{(-1)^{l-1}}{2l^2}$ if $k=2l$ is even. Therefore, we obtain
\[ \exp \left( \sum_{k \geq 1} k N_{0, kv_1}^{k L_1^{v_1}} t^{k L_1^{v_1}} z^{-kv_1}\right) = \exp \left(\sum_{l \geq 1} 2l \frac{(-1)^{l-1}}{2l^2} t^{2lL_1^{v_1}} z^{-2lv_1} \right) = 1 + t^{2 L_1^{v_1}} z^{-2v_1} \,. \]
The same argument applies for $L_1^{v_1}$ replaced by $L_2^{v_1}$.
\end{proof}

\begin{lemma} \label{lem_conic}
We have \[ \exp \left( \sum_{l \geq 1} (2l) N_{0, 2lv_1}^{l C^{v_1}} t^{l C^{v_1}} z^{-2lv_1}\right) = (1 - t^{C^{v_1}} z^{-2v_1})^{-2} \,. \]
Moreover, we have $N_{0, kv_1}^{k(H-E_2-E_3)}=0$ for every $k$ odd. 
\end{lemma}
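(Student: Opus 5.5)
The plan is to split the moduli space $\overline{\mathcal{M}}^{lC^{v_1}}_{0,2lv_1}(Y,D)$ according to the image curve and evaluate each piece by a local computation. First I record that $C^{v_1}=2(H-E_2-E_3)$ numerically, with $(C^{v_1})^2=0$ and $C^{v_1}\cdot D_1=2$. By Lemma \ref{Lemma 4.3} (for $Y$ general, which suffices by deformation invariance of $N^\beta_{0,v}$), every map contributing to $N^{lC^{v_1}}_{0,2lv_1}$ has irreducible image. By Lemma \ref{lem_classification} and Lemmas \ref{lem_bad_line}, \ref{lem_conics}, that image is either the smooth conic $Q$ of Lemma \ref{lem_conics}, covered with degree $l$, or the line $M$ of class $H-E_2-E_3$ through $x_2,x_3$, covered with degree $2l$. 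These two loci are disjoint, hence open and closed, so $N^{lC^{v_1}}_{0,2lv_1}=N_Q(l)+N_M(2l)$.

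I would treat the second statement first, since it is the cleanest. A map contributing to $N^{k(H-E_2-E_3)}_{0,kv_1}$ factors through $M$, which is an orbifold $\mathbb{P}^1$ with two $\mathbb{Z}/2\mathbb{Z}$ points (at $x_2,x_3$) meeting $D_1$ transversely at one point. The source curve has no orbifold marked points, and the map must be representable. Therefore, over each orbifold point of $M$, every preimage has even ramification, so the degree $k$ must be even. For $k$ odd the moduli space is empty and $N=0$. (Equivalently, this can be phrased as the vanishing in \cite[Proposition 5.7]{GPS} for odd degree, as used in Lemma \ref{lem_line}.)

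For the conic contribution, I check from the class computed in Case~1 of Lemma \ref{lem_classification} that the strict transform of $Q$ satisfies $\beta''\cdot E''_i=0$. Hence $Q$ avoids the nodes and is an ordinary smooth rational curve with normal bundle $\mathcal{O}$, meeting $D$ only at a single point of $D_1$ with tangency $2$. The contribution $N_Q(l)$ of $l$-fold covers totally ramified over the contact point then reduces, via the deformation-to-the-normal-cone argument of \cite{GPS}, to the local invariant of $\mathcal{O}_{\mathbb{P}^1}$ relative to a fibre with full tangency. I expect $N_Q(l)=1/l^2$, which is the case $m=2$ of the binomial multiple cover formula $\frac{1}{l^2}\binom{l(m-1)-1}{l-1}$.

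For the even covers of $M$, I would compute $N_M(2l)$ in relative orbifold Gromov--Witten theory of $\mathbb{P}^1$ with two $\mathbb{Z}/2\mathbb{Z}$ points, relative to one point. I would do this by localization, or by quotienting to relate it to ordinary covers of $\mathbb{P}^1$ double covered by an elliptic-type involution, obstructed by the normal bundle of degree $(H-E_2-E_3)^2=0$.

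The target identity is
\[
2l\bigl(N_Q(l)+N_M(2l)\bigr)=\frac{2}{l},
\]
since $\exp\bigl(\sum_l \frac{2}{l}x^l\bigr)=(1-x)^{-2}$. The main obstacle is the precise evaluation and normalisation of these two local contributions, that is, deciding how the total $1/l^2$ splits between $Q$ and $M$. My first attempt would be to fix the split by a global consistency argument: $Y$ carries the conic fibration given by the pencil $|C^{v_1}|$, with $D_1$ a bisection and $D_2,D_3$ contained in fibres, and its two fibres tangent to $D_1$ are exactly $Q$ and $2M$. I would compare with the smooth cubic case of \cite{bousseau_skein, GHKScubic}, where the analogous tangent-fibre contributions are known, and use the consistency of the canonical scattering diagram to pin down the orbifold fibre's contribution.
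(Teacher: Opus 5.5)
Your decomposition $N^{lC^{v_1}}_{0,2lv_1}=N_Q(l)+N_M(2l)$ matches the paper. So does the value $N_Q(l)=1/l^2$ from \cite[Proposition 6.1]{GPS} with $w=2$. Your parity argument for odd $k$ differs from the paper's virtual-class argument but is essentially fine. There is, however, a genuine gap exactly where you locate the ``main obstacle'': you never prove that $N_M(2l)=0$. Since $Q$ avoids the nodes and the two loci are disjoint, $N_Q(l)$ is a purely local quantity. So once $N_Q(l)=1/l^2$ is known, there is no split left to decide: the first formula of the lemma is equivalent to the vanishing of every even-degree contribution of $M$.

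The route you propose for this cannot work. Asking how ``the total $1/l^2$'' is distributed presupposes the identity you are proving. Consistency of $\mathfrak{D}_{\text{can}}$ does not pin down an individual ray function: the smooth cubic surface has the same $(B,\Sigma)$ and a consistent canonical diagram with a different $f_{\rho_1}$. Moreover, one of the two proofs of consistency (via Remark \ref{remark_D11}) rests on the computed $f_{\rho_1}$, hence on this lemma, so invoking it would be circular. Finally, a smooth cubic surface has no nodes, hence no fibre of the form $2M$ with $M$ through two nodes, so comparing with it says nothing about $N_M$.

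The paper's proof supplies precisely this missing step. It shows that maps factoring through $M$ contribute zero in every degree, arguing via the obstruction theory from the facts that $M$ meets $D_1$ transversally and $M^2=0$. Thus for $k=2l$ only $Q$ contributes, and the odd case follows at the same time.

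Your own representability idea also closes the gap cleanly. With its induced orbifold structure, $M$ is the football $[\mathbb{P}^1/\mu_2]$. On a genus-$0$ twisted curve without stacky markings, every $\mu_2$-torsor is trivial. This excludes stacky nodes in the domain, a case your parity argument skips. It also shows that every non-contracted component lifts to the étale double cover $\mathbb{P}^1$, so it meets $D_1$ in at least two distinct points, lying over the two preimages of $M\cap D_1$.

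For an irreducible domain this contradicts Riemann--Hurwitz. A degree-$2l$ map with even ramification over $x_2,x_3$ and total ramification over $M\cap D_1$ would have total ramification at least $l+l+(2l-1)>4l-2$. In general, balancing at components contracted to $M\cap D_1$ forces the two upward paths in the tropical tree starting at a non-contracted component to both end at the unique leg, which creates a cycle. Hence the moduli space of genus-$0$ maps factoring through $M$ with a single contact point is empty in every degree. This gives $N_M(2l)=0$ and the odd-$k$ statement at once.
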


\begin{proof}
By deformation invariance of orbifold log Gromov--Witten invariants, one can assume that $Y$ is general enough.
By Lemmas \ref{lem_classification}, \ref{lem_bad_line}, and \ref{lem_conics}, every orbifold stable log map contributing to $N_{0, kv_1}^{k(H-E_2-E_3)}$ factors either through the unique smooth conic of class $C^{v_1}$ tangent to $D_1$, in which case $k$ is even since $C^{v_1}=2(H-E_2-E_3)$, or factors through the unique line passing through the nodes $x_2$ and $x_3$. We claim that the orbifold stable log maps factoring through this line do not contribute to the orbifold log Gromov--Witten invariant. Indeed, since this line intersects $D_1$ transversely and has self-intersection $(H-E_2-E_3)^2=0$, the virtual class for genus zero orbifold stable log maps factoring through this line is a multiple of the Euler class of the bundle with fibers $H^1(\mathcal{O})=0$,  and so vanishes.

Therefore, one can assume that $k=2l$ is even, and restrict to orbifold stable log maps
that factor through the smooth conic of class $C^{v_1}$.
This conic does not contain any orbifold point. Therefore, by \cite[Proposition 6.1]{GPS} applied with $w=2$, we have $N_{0, 2lv_1}^{l C^{v_1}} = \frac{1}{l^2}$ for all $l \geq 1$. Therefore, we obtain
\[ \exp \left( \sum_{l \geq 1} (2l) N_{0, 2lv_1}^{l C^{v_1}} t^{l C^{v_1}} z^{-2lv_1}\right) =
\exp \left( \sum_{l \geq 1} (2l)\frac{1}{l^2} t^{l C^{v_1}} z^{-2lv_1}\right) = 
(1 - t^{C^{v_1}} z^{-2v_1})^{-2} \,. \]
\end{proof}

\begin{lemma}
	\label{Lemma 4.1}
    The ray $(\rho_1, f_{\rho_1})$ in the canonical scattering diagram $\mathfrak{D}_{\text{can}}$ is given by 
    \[f_{\rho_{1}} = 
    \dfrac{\left(1+t^{2L_1^{v_1}}z^{-2v_1}\right)\left(1+t^{2L_2^{v_1}}z^{-2v_1}\right)}{\left(1-t^{C^{v_1}}z^{-2v_1}\right)^2} \,,\] 
    where $L_1^{v_1} = E_1, L_2^{v_1} =2H- 2E_2-2E_3- E_1$ and $C^{v_1} = 2H - 2E_2 - 2E_3$.
\end{lemma}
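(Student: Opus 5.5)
The plan is to assemble $f_{\rho_1}$ from the defining formula
\[
f_{\rho_1}=\exp\Bigl(\sum_{k\geq1}\sum_{\beta}kN^{\beta}_{0,kv_1}t^{\beta}z^{-kv_1}\Bigr)
\]
by first restricting which pairs $(k,\beta)$ can have $N^{\beta}_{0,kv_1}\neq 0$, and then inserting the values already computed in Lemmas \ref{lem_line} and \ref{lem_conic}. Orbifold log Gromov--Witten invariants are deformation invariant, so throughout we may take $Y$ general, as required in Lemmas \ref{lem_conics} and \ref{Lemma 4.3}.

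\emph{Step 1: support of the moduli spaces.} Let $f\colon C\to Y$ be a point of $\overline{\mathcal{M}}^{\beta}_{0,kv_1}(Y,D)$. Since the contact order is $kv_1$, we have $\beta\cdot D_1=k$ and $\beta\cdot D_2=\beta\cdot D_3=0$.
\begin{itemize}
\item By Lemma \ref{Lemma 4.3}, the image $f(C)$ is a single integral curve $\bar C$. Then $\beta=m[\bar C]$ for some $m\geq1$.
\item Applying Lemma \ref{lem_classification} to the normalization of $\bar C$ gives $[\bar C]\in\{L_1^{v_1},L_2^{v_1},H-E_2-E_3,C^{v_1}\}$.
\item For the first three classes $[\bar C]\cdot D_1=1$, and for $C^{v_1}$ we have $[\bar C]\cdot D_1=2$.
\end{itemize}
Hence every nonzero invariant falls into one of three families: $N^{kL_1^{v_1}}_{0,kv_1}$, $N^{kL_2^{v_1}}_{0,kv_1}$, or $N^{k(H-E_2-E_3)}_{0,kv_1}$.

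The third family needs care, because $lC^{v_1}=2l(H-E_2-E_3)$. Its moduli spaces contain both maps onto the smooth conic and multiple covers of the line through $x_2$ and $x_3$. Lemma \ref{lem_conic} handles exactly this: for odd $k$ only the line can contribute, and that invariant vanishes; for even $k=2l$ the generating series is the one recorded there. I will double-check that the proof of Lemma \ref{lem_conic} also kills the line's contribution for even $k$, so that the invariant is carried by the conic alone. The key observation is that its vanishing argument (obstruction bundle with $H^1(\mathcal O)$, self-intersection $0$) does not depend on the parity of $k$.

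\emph{Step 2: factorization.} The exponential of the sum splits as a product of the exponentials of the three families. I substitute the three identities:
\begin{itemize}
\item Lemma \ref{lem_line} gives the factors $1+t^{2L_1^{v_1}}z^{-2v_1}$ and $1+t^{2L_2^{v_1}}z^{-2v_1}$.
\item Lemma \ref{lem_conic} gives the factor $(1-t^{C^{v_1}}z^{-2v_1})^{-2}$.
\end{itemize}
Multiplying these yields the stated formula.

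\emph{Main obstacle.} The only delicate point is the even-$k$ part of the third family: making sure the curve classes $2l(H-E_2-E_3)$ are not double-counted, and that the line through the two nodes contributes nothing in every degree. Everything else is bookkeeping on top of the earlier lemmas.
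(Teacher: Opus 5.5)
Your proposal is correct and is essentially the paper's proof. Like the paper, you pass to a general $Y$ by deformation invariance, use Lemmas~\ref{lem_classification} and~\ref{Lemma 4.3} to restrict the contributing classes to multiples of $L_1^{v_1}$, $L_2^{v_1}$ and $H-E_2-E_3$, and multiply the factors from Lemmas~\ref{lem_line} and~\ref{lem_conic}; your extra even-$k$ check is already covered by Lemma~\ref{lem_conic}, whose first identity is stated for the full invariant $N^{lC^{v_1}}_{0,2lv_1}$ in class $lC^{v_1}=2l(H-E_2-E_3)$, so nothing is double counted.
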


\begin{proof}
By deformation invariance of orbifold log Gromov--Witten invariants, one can assume that $Y$ is general enough.
By Lemmas \ref{lem_classification}, \ref{Lemma 4.3}, together with Lemmas \ref{lem_lines},\ref{lem_bad_line},\ref{lem_conics}, the only curve classes contributing to $f_{\rho_1}$ are multiples of $L_1^{v_1}$, $L_2^{v_1}$, and $H-E_2-E_3$. The contributions of these curve classes are determined by 
Lemmas \ref{lem_line} and \ref{lem_conic}.
\end{proof}

\subsection{General rays in the canonical scattering diagram} \label{Subsection 4.1} 
Following \cite{GHKScubic} in the case of a smooth cubic surface, we explain how to deduce the form of general rays of the canonical scattering diagram from the particular ray $(\rho_1, f_{\rho_1})$ using a $\operatorname{PSL}(2,\Bbb{Z})$ symmetry. 
First note that the natural action of $SL(2,\Bbb{Z})$ by linear transformations on $\Bbb{R}^2$  induces an action of $PSL(2,\Bbb{Z})$ on the tropical cubic surface $B=\mathbb{R}^2/\{ \pm 1 \}$, preserving the set $B(\mathbb{Z})$ of integral points.

On the other hand, there is an action of $PSL(2,\mathbb{Z})$ on the group $A_1(Y)$ of curve classes, described as follows.
Recall that $SL(2,\Bbb{Z})$ is generated by 
\[ S = \begin{pmatrix}
    0&1 \\-1 & 1
\end{pmatrix}, \ \ T =  \begin{pmatrix}
    1&1 \\0&1
\end{pmatrix}\,.\]
Moreover, $A_1(Y)$ is generated by the classes $E_1, E_2, E_3, H$. 
The action of $S$ is defined by 
\[S_*(H) = H, \ \ S_*(E_i) = E_{i+1} \ \ i = 1,2,3 \,(\operatorname{mod} 3)\,.\]
Denote 
\[\Gamma = \{(v,\beta )\in B(\mathbb{Z}) \times \operatorname{NE(Y)} \vert \ N_{0, v} ^{\beta} \neq 0\}\,.\]

\begin{lemma}
	\label{Lemma 4.4}
    The action induced by $S_*$ on $B(\mathbb{Z}) \times A_1(Y)$ induces an action on $\Gamma$, preserving $\Gamma$. Furthermore,
    \[N_{0,S(v)}^{S_*(\beta)} = N_{0,v}^{\beta}\]
\end{lemma}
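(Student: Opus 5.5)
The invariance should come from a geometric symmetry of $(Y,D)$, or of its deformation class, that realizes the combinatorial action of $S$. On the tropical side, $S$ acts linearly on $\mathbb{R}^2$ and hence on $B$. One checks directly that $S$ permutes the rays $\rho_1,\rho_2,\rho_3$ cyclically, up to the identification $v\sim -v$: it sends $v_1=(1,0)\mapsto(0,-1)\sim v_2$, $v_2\mapsto(1,1)\sim -v_3$, and $v_3\mapsto(-1,0)\sim v_1$. (If the conventions of row versus column action give the opposite cyclic order, one relabels accordingly.) So $S$ induces an automorphism of the cone complex $\Sigma$ carrying $\rho_i$ to $\rho_{i+1}$ and preserving $B(\mathbb{Z})$. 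On the geometric side, the plane triangle $\overline{D}$ with points $p_i\in\overline{D}_i$ admits a cyclic symmetry. In the coordinates of Lemma~\ref{Lemma 4.3}, the permutation of homogeneous coordinates that rotates the three lines can be composed with a torus element to send $p_i$ to $p_{i+1}$, at least for a suitable (special but still three-nodal) choice of the $p_i$, for instance $p_1=[1:1:0]$, $p_2=[1:0:1]$, $p_3=[0:1:1]$. This lifts through both blow-ups and the contraction to an automorphism $\phi$ of $Y$ preserving $D$. The induced action satisfies $\phi(D_i)=D_{i+1}$, $\phi_*H=H$ and $\phi_*E_i=E_{i+1}$, which is exactly $S_*$.

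The key steps, in order, are as follows.
\begin{enumerate}
\item By deformation invariance of orbifold log Gromov--Witten invariants, which was already used in Lemmas~\ref{lem_conic} and~\ref{Lemma 4.1}, reduce to this symmetric member $Y$ of the family.
\item Verify that $Y$ is indeed an orbifold cubic surface with three nodes, i.e.\ that $p_1,p_2,p_3$ are non-collinear. For the choice above the determinant is $-2\neq0$, so this holds.
\item Check that $\phi$ is compatible with contact orders. The contact order of a log map is recorded by the point $v\in B(\mathbb{Z})$ through its intersection numbers with the $D_i$, and since $\phi$ permutes the components $D_i$, it transforms $v$ by the induced linear map on $\Sigma$. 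On each cone $\sigma_{i,i+1}$ this map is the integral-affine identification sending $(v_i,v_{i+1})\mapsto(v_{i+1},v_{i+2})$, and one checks that it agrees with $S$ on $B$.
\item Observe that composing with $\phi$ gives an isomorphism of moduli stacks $\overline{\mathcal{M}}^{\beta}_{0,v}(Y,D)\cong\overline{\mathcal{M}}^{S_*\beta}_{0,S(v)}(Y,D)$. Because $\phi$ is an isomorphism of log orbifolds, it respects the orbifold structure at the nodes and the obstruction theories, so it identifies the virtual classes. This gives $N^{S_*\beta}_{0,S(v)}=N^{\beta}_{0,v}$, and in particular $S$ preserves $\Gamma$. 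The same argument applied to $\phi^{-1}$ gives the reverse inclusion.
\end{enumerate}

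The main obstacle is step 3: matching the abstract linear map $S$ on $B$ with the automorphism induced by $\phi$ on the tropicalization. This is delicate because the affine structure on $B$ is singular at $0$, and because the $\pm$ identification and the orientation of the cyclic permutation must agree. I would verify this on generators by computing $S$ on $v_1,v_2,v_3$ in $\mathbb{R}^2/\pm1$ and comparing with $\phi(D_i)=D_{i+1}$, adjusting the direction of the rotation if needed. If no symmetric member existed in the family, I would instead use monodromy of the family of pairs, which cyclically relabels $(p_1,p_2,p_3)$, together with deformation invariance; this yields the same conclusion.
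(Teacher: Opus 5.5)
Your strategy (realize $S$ by an automorphism of the log orbifold rotating $D_1\to D_2\to D_3$ and $E_1\to E_2\to E_3$, then transport moduli spaces and virtual classes) is sound, and it is the natural content behind \cite[Lemma 4.15]{bousseau_skein}, which is all the paper cites. However, the member you reduce to is not an orbifold cubic surface with three nodes, so steps 1 and 2 fail as written.

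For $p_1=[1:1:0]$, $p_2=[1:0:1]$, $p_3=[0:1:1]$, consider the smooth conic $x^2+y^2+z^2-2xy-2xz-2yz=0$. It restricts to $(x-y)^2$, $(x-z)^2$, $(y-z)^2$ on the three sides, so it is tangent to $\overline{D}_i$ exactly at $p_i$ for every $i$. Its strict transform on $Y''$ has class $2H''-\sum_i(E''_i+2E''_{ii})$. This is a $(-2)$-curve disjoint from $D''$ and from the $E''_i$, so the anticanonical model acquires a fourth node. You have picked the Cayley cubic, which is precisely the case $u=v$ excluded in the proof of Lemma \ref{Lemma 4.3}. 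Non-collinearity only rules out the other degeneration, the $D_4$ case, where the line through the $p_i$ becomes a $(-2)$-curve.

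Deformation invariance does not reach this member. The orbifold structure jumps, and the invariants genuinely change: the smooth conic of Lemma \ref{lem_conics}, which produces the factor $(1-t^{C^{v_1}}z^{-2v_1})^{-2}$ in $f_{\rho_1}$, is now this contracted curve and no longer meets $D_1$.

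The repair is easy, and it makes deformation invariance unnecessary, because every three-nodal pair has the symmetry. Normalize $p_1=[u:v:0]$, $p_2=[1:0:1]$, $p_3=[0:1:1]$ with $uv\neq 0$ and $u\neq\pm v$. Then the projective linear map $[x:y:z]\mapsto[uy:vz:vx]$ sends $\overline{D}_1\to\overline{D}_2\to\overline{D}_3\to\overline{D}_1$ and $p_1\mapsto p_2\mapsto p_3\mapsto p_1$. Conceptually, the only torus invariant of the configuration is the cyclic product of the ratios of the $p_i$ along the sides, and a rotation of the triangle preserves it. This map lifts through both blow-ups and the contraction, and your step 4 then finishes the proof. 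If you prefer a pure coordinate rotation, take $p_1=[1:a:0]$, $p_2=[a:0:1]$, $p_3=[0:1:a]$ with $a\neq 0$, $a^3\neq\pm1$, and $[x:y:z]\mapsto[y:z:x]$; your choice was $a=1$.

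You should also fix the tropical check. With $S$ acting on column vectors, $v_2\mapsto(1,1)$ and $v_3\mapsto(1,2)$, and neither is $\pm v_1$ or $\pm v_3$. So in that convention $S$ does not preserve $\Sigma$ at all, and no relabeling helps. With the row action $(m,n)\mapsto(-n,m+n)$ you get $v_1\mapsto v_2\mapsto v_3\mapsto -v_1$, which agrees cone by cone with the map induced by $D_i\mapsto D_{i+1}$.
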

\begin{proof}
    See \cite[Lemma 4.15]{bousseau_skein}
\end{proof}

The action of $T$ on $A_1(Y)$ is defined by
\[T_*(E_1) = E_1, \ T_*(E_3) = E_2, \ T_*(E_2) = H - E_3, \ T_*(H) = 2H - 2E_2 \,. \]
As in \cite{GHKScubic}, this transformation is induced by the following log birational transformation of $(Y,D)$: 
blow up the intersection of $D_1 \cap D_3$ and contract the $(-1)-$curve $D_2$ to obtain a new orbifold cubic surface $(\widetilde{Y}, \widetilde{D})$ with $\widetilde{D} = \widetilde{D}_1 + \widetilde{D}_2 + \widetilde{D}_3$, where $\widetilde{D}_1$ is the strict transform of $D_1$, $\widetilde{D}_2$ is the strict transform of $D_3$ and $\widetilde{D}_3$ is the exceptional divisor of the blowup at $D_1 \cap D_3$. Note that $\widetilde{Y}$ still has three nodes since the nodes are contained in the complement of $D$. 

\begin{lemma}
	\label{Lemma 4.5}
    The action induced by $T_*$ on $A_1(Y)$ induces an action on $\Gamma$, preserving $\Gamma$. Furthermore,
    \[N_{0, T(v)}^{T_*(\beta)} = N_{0,v}^{\beta}\]
\end{lemma}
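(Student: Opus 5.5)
The proof follows \cite[Lemma 4.16]{bousseau_skein} and \cite{GHKScubic}, using the log birational modification $\phi\colon (Y,D) \dashrightarrow (\widetilde{Y},\widetilde{D})$ described above. Concretely, let $p\colon \widehat{Y} \to Y$ be the blow-up of the node-free point $D_1\cap D_3$, with exceptional curve $F$. Let $c\colon \widehat{Y}\to \widetilde{Y}$ be the contraction of the strict transform of $D_2$. The strict transform of $D_2$ is a $(-1)$-curve: $D_2^2=0$ on $Y$, since $(H-2E_2)^2=1-4\cdot\tfrac14\cdot\ldots$ matches the smooth cubic computation with $D_i^2=-1$ before blowing up. Since all blown-up and contracted loci lie in $D$, away from the three nodes, $\widetilde{Y}$ is again an orbifold cubic surface with three nodes. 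It is moreover deformation equivalent to $Y$, because it is obtained from $\mathbb{P}^2$ by the same construction with a different choice of toric model.

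\textbf{Step 1: identify $T_*$.} I first check that the composite correspondence $c_*p^*\colon A_1(Y)\to A_1(\widetilde{Y})$, followed by an identification $A_1(\widetilde Y)\simeq A_1(Y)$ matching boundary components and the classes $E_i$, is given by the formula for $T_*$ above. It is convenient to work on the minimal resolutions $Y''$ and $\widetilde{Y}''$, where these are honest blow-ups and blow-downs of smooth surfaces, and then push forward. On the tropical side, blowing up a node of $D$ subdivides the cone $\sigma_{31}$ by adding the ray through $v_3+v_1$. Contracting $D_2$ removes $\rho_2$. After re-identifying the new fan with $\Sigma$, the integral affine structure on $B$ is unchanged, and the induced linear map on $B(\mathbb{Z})$ is $T$. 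As a sanity check, I verify that $T_*\beta\cdot \widetilde{D}_j$ equals the pairing of $T(v)$ with the corresponding tropical data, so that contact orders transform compatibly.

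\textbf{Step 2: compare invariants.} There are two invariance statements to combine.

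\begin{enumerate}
\item Blowing up a node of $D$ (a toric blow-up) leaves log Gromov--Witten invariants unchanged, by the birational invariance of \cite{AW}-type, or via \cite[Proposition 3.3]{GHK1} as used in \cite{GHK_moduli}. The identification sends $(v,\beta)$ to $(v,p^*\beta)$, with $v$ read in the refined fan.
\item The contraction $c$ is the inverse of such a toric blow-up from the point of view of $\widetilde{Y}$, so the same invariance applies.
\end{enumerate}

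Both statements only concern neighbourhoods of $D$. The orbifold points are disjoint from $D$, and as explained in \S\ref{sec_def_canonical} the orbifold and log structures do not interact. The proofs therefore go through verbatim for orbifold stable log maps. Combining them gives $N^{\beta}_{0,v}(Y,D)=N^{T_*\beta}_{0,T(v)}(\widetilde{Y},\widetilde{D})$. Deformation invariance, identifying $(\widetilde{Y},\widetilde{D})$ with $(Y,D)$ in a family that matches classes via $T_*$, then gives $N^{T_*\beta}_{0,T(v)}(\widetilde{Y},\widetilde{D})=N^{T_*\beta}_{0,T(v)}(Y,D)$. In particular $\Gamma$ is preserved. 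Applying the same argument to $T^{-1}$, realized by the inverse modification, shows that $T$ acts bijectively on $\Gamma$.

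\textbf{Main obstacle.} I expect the hardest part to be the deformation-invariance identification. It requires checking that the monodromy-free identification of $A_1(\widetilde Y)$ with $A_1(Y)$ is exactly $T_*$, including where the nodes $x_2,x_3$ and the classes $E_2,E_3$ go, with $E_2\mapsto H-E_3$ reflecting a Cremona-type change of toric model. To handle this, I would write down explicitly the new toric model of $\widetilde{Y}''$: contract $(-1)$-curves meeting $\widetilde{D}''$ and verify that the exceptional curves have the classes $T_*E_i$, just as in \cite{GHKScubic}.
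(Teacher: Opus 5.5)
Your overall route is the paper's. You realize $T$ by blowing up $D_1\cap D_3$ and contracting $D_2$, apply log birational invariance \cite{Abwise} (which the orbifold points, lying off $D$, do not affect), and identify $(\widetilde Y,\widetilde D)$ with $(Y,D)$ as in \cite[Lemma 4.16]{bousseau_skein}.

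\textbf{The gap is in Step 2(1).} A log map to $Y$ of class $\beta$ and contact order $v$ does not lift to $\widehat Y$ with class $p^*\beta$. It lifts with class $p^*\beta-m(v)F$, where $m(v)$ is the coefficient of the new ray $v_3+v_1$ when $v$ is written in the refined fan. That is, $m(v)=\min(a,b)$ if $v=av_3+bv_1\in\sigma_{31}$, and $m(v)=0$ otherwise. The class $p^*\beta$ has $p^*\beta\cdot F=0$, which is incompatible with contact order $m(v)>0$ along $F$. Since $c_*F=\widetilde D_3$, what your argument actually proves is $N^{\beta}_{0,v}=N^{T_*\beta-m(v)D_3}_{0,T(v)}$. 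This is a piecewise-linear correspondence, not $(v,\beta)\mapsto(T(v),T_*\beta)$.

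Your planned sanity check in Step 1 would expose this. For every $\beta$, $c_*p^*\beta\cdot\widetilde D_3=\beta\cdot p_*c^*\widetilde D_3=\beta\cdot p_*F=0$. But for $v=av_3+bv_1$ in the interior of $\sigma_{31}$, the point $T(v)$ has $v_3$-coordinate $\min(a,b)>0$.

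Here is a concrete case. Take double covers of the strict transform of the line through $p_2$ and $\overline D_1\cap\overline D_3$; this is an orbifold line through $x_2$. They give $(2(v_3+v_1),\,2(H-E_2))$ with $N=\tfrac12$, and this corresponds to $(2v_3,\,2E_3)$. The linear $T_*$ instead sends it to $(2v_3,\,T_*(2H-2E_2))$. Since $T_*(2H-2E_2)\cdot D_1=2\neq 0$, that moduli space is empty.

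So on the interior of $\sigma_{31}$ the identity holds only with the correction term. The real subtlety is this class bookkeeping, not the deformation identification you singled out. The paper's two-line proof is also silent on this point. The correction disappears after pairing with $\sD$, since $\sD\cdot D_3=0$.

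\textbf{Two smaller points.}

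First, on $Y$ one has $D_2^2=(H-2E_2)^2=1+4E_2^2=-1$, not $0$. Since $D_2$ misses $D_1\cap D_3$, it is already the $(-1)$-curve to contract.

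Second, when you carry out Step 1 you will find $T_*H=2H-2E_3$, not the printed $2H-2E_2$. Linearity, together with $T_*E_2=H-E_3$ and $T_*D_2=c_*p^*D_2=0$, forces $T_*H=2T_*E_2$. With this value $T_*$ fixes $L_1^{v_1}$, $L_2^{v_1}$ and $C^{v_1}$. It must, because $T(v_1)=v_1$ and these curves are untouched by the modification. With $2H-2E_2$ one would instead get $T_*C^{v_1}\cdot D_2=-4$.
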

\begin{proof}
Since the birational transformation inducing $T$ is a logarithmic birational modification, invariance of logarithmic Gromov--Witten invariants under such modifications \cite{Abwise} gives
$N^{T_\ast(\beta)}_{0,T(v)} = N^\beta_{0,v}$.
The remaining assertions follow as in \cite[Lemma 4.16]{bousseau_skein}.
\end{proof}

\begin{lemma}
    For every primitive $v \in B(\Bbb{Z})$, there exists $M \in PSL(2,\Bbb{Z})$, such that, denoting $L_1^{v}=M_\star(L_1^{v_1})$, 
    $L_2^{v}=M_\star(L_2^{v_1})$, $C^{v}=M_\star(C^{v_1})$, the ray $(\fd, f_\fd)$ of the canonical scattering diagram $\fD_{\text{can}}$ with $\fd=\mathbb{R}_{\geq 0} v$ is given by 
    \[f_{\fd} = 
    \dfrac{\left(1+t^{2L_1^{v}}z^{-2v}\right)\left(1+t^{2L_2^{v}}z^{-2v}\right)}{\left(1-t^{C^{v}}z^{-2v}\right)^2}  \in \kk[NE(Y)] [\![ z^{-v} ]\!]\,.\] 
\end{lemma}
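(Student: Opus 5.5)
The plan is to reduce to the ray $\rho_1$ via the $PSL(2,\mathbb{Z})$ symmetry and then transport Lemma~\ref{Lemma 4.1} along the symmetry. Three steps are needed.

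\textbf{Step 1: $PSL(2,\mathbb{Z})$ is transitive on primitive integral points of $B$.} Let $v\in B(\mathbb{Z})$ be primitive. Lift it to a primitive vector $\tilde v=(a,b)\in\mathbb{Z}^2$. Since $\gcd(a,b)=1$, we can complete $\tilde v$ to a matrix in $SL(2,\mathbb{Z})$ whose first column is $\tilde v$; this matrix sends $(1,0)$ to $\tilde v$. Because $SL(2,\mathbb{Z})$ is generated by $S$ and $T$, we can write it as a word in $S$ and $T$. Its image $M\in PSL(2,\mathbb{Z})$ then satisfies $M(v_1)=v$ in $B=\mathbb{R}^2/\{\pm1\}$. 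The sign ambiguity is harmless, since $B$ is the quotient by $\pm 1$.

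\textbf{Step 2: define $M_\star$ and transport the invariants.} Define $M_\star$ as the corresponding composite of $S_*$ and $T_*$ acting on $A_1(Y)$. We need $M_\star$ to depend only on $M$, or at least that some choice works; the statement only asserts the existence of some $M$, so it suffices to fix one word. Iterating Lemmas~\ref{Lemma 4.4} and~\ref{Lemma 4.5} gives
\[
N_{0,M(w)}^{M_\star(\beta)}=N_{0,w}^{\beta}\quad\text{for all } (w,\beta),
\]
and shows that $\Gamma$ is preserved by $M$ and by its inverse. Hence $(kv,\beta')\in\Gamma$ if and only if $\beta'=M_\star(\beta)$ with $(kv_1,\beta)\in\Gamma$, and in that case the invariants agree. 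Here I use that $M_\star$ is an automorphism of $A_1(Y)$ preserving the effective classes that appear; this follows because $S_*$ and $T_*$ are induced by automorphisms or log birational modifications of orbifold cubic surfaces.

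\textbf{Step 3: substitute into the definition of $f_{\fd}$.} We get
\[
f_{\fd}=\exp\Bigl(\sum_{k,\beta} kN_{0,kv_1}^{\beta}\,t^{M_\star\beta}z^{-kv}\Bigr).
\]
This is the image of $f_{\rho_1}$ under the ring homomorphism $t^\beta z^{-kv_1}\mapsto t^{M_\star\beta}z^{-kv}$. That homomorphism commutes with $\exp$, and Lemma~\ref{Lemma 4.1} identifies $f_{\rho_1}$ as a rational function, which is then transported term by term to the claimed formula with $L_1^v,L_2^v,C^v$.

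\textbf{Main obstacle.} The difficulty lies in Step 2. The transformed pair $(\widetilde Y,\widetilde D)$ is a different orbifold cubic surface, not $Y$ itself. To use Lemma~\ref{Lemma 4.5} iteratively we must identify the invariants of $(\widetilde Y,\widetilde D)$ with those of $(Y,D)$. I would do this by deformation invariance within the connected one-parameter family of orbifold cubic surfaces with three nodes, using that the identification $A_1(\widetilde Y)\cong A_1(Y)$ is compatible with parallel transport, as in \cite[Lemma 4.16]{bousseau_skein}.

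We must also check that $M_\star$ maps $NE(Y)$ into $NE(Y)$ at least on the classes $L_1^{v_1},L_2^{v_1},C^{v_1}$ and their multiples. For these classes this is automatic: their images are classes of actual curves, obtained as strict transforms under the birational modifications.
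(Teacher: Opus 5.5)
Your proposal is correct and takes the same route as the paper: transitivity of $PSL(2,\mathbb{Z})$ on primitive points of $B(\mathbb{Z})$, transport of the invariants along $M$ using Lemmas~\ref{Lemma 4.4} and~\ref{Lemma 4.5}, and substitution into the formula for $f_{\rho_1}$ from Lemma~\ref{Lemma 4.1}. The two side issues you flag need no separate argument. The identification of the invariants of $(\widetilde Y,\widetilde D)$ with those of $(Y,D)$ is already part of the statement of Lemma~\ref{Lemma 4.5} (via \cite[Lemma 4.16]{bousseau_skein}), so you can simply cite it. Effectiveness of $2L_1^v$, $2L_2^v$ and $C^v$ follows directly from the preservation of $\Gamma$, since these classes carry nonzero invariants.
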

\begin{proof}
Since $SL(2,\mathbb{Z})$ acts transitively on primitive elements of $\mathbb{Z}^2$, $PSL(2,\mathbb{Z})$ acts transitively on primitive elements of $B(\mathbb{Z})$. Thus, there
exists $M \in PSL(2,\mathbb{Z})$ such that $M(v_1)=v$. Then, the result follows from Lemmas \ref{Lemma 4.4}, \ref{Lemma 4.5}, and the calculation of $f_{\rho_1}$
in Lemma \ref{Lemma 4.1}.
\end{proof}

\section{The intrinsic mirror algebra}
\label{Section 5}

In this section, we first define the intrinsic
mirror algebra from the canonical scattering diagram in \S\ref{sec_def_intrinsic}.
Then, in \S\ref{sec_calculation_intrinsic},
we prove Theorem \ref{thm_main_1}, stated as Theorem \ref{thm_main_intro_1} in the introduction, by explicitly determining the intrinsic mirror algebra of an orbifold cubic surface with three nodes. 

\subsection{Definition of the intrinsic mirror algebra}
\label{sec_def_intrinsic}

We first review the definition of broken lines in a scattering diagram.

\begin{defn}
	Let $\mathfrak{D}$ be a scattering diagram on the tropical cubic surface $(B, \Sigma)$. A \emph{broken line} $\gamma$ for $\mathfrak{D}$ with charge $p \in B(\Bbb{Z})\setminus \{0\}$ and endpoint $Q \in B\setminus \{0\}$ is a proper continuous piecewise integral affine map 
	$\gamma : (-\infty, 0 ] \to B$, together with real numbers $t_0=-\infty<t_1<\dots<t_n=0$, and monomials $m_1, \dots, m_n$, satisfying the following properties:
	\begin{itemize}
        \item[(i)] $\gamma(0) = Q \in B$.
		\item[(ii)] For every $1\leq i\leq n$, $\gamma|_{(t_{i-1},t_i]}$ is affine linear and $\gamma((t_{i-1}, t_i])$ is contained in a two-dimensional cone $\sigma_{j,j+1}$ of $\Sigma$. Moreover, 
    $m_i = c_i z^{-p_i}$, with $c_i \in \kk[NE(Y)]$ and $p_i = av_j+bv_{j+1} \neq 0$ for $a,b \in \mathbb{Z}$ such that $\gamma'(t)=p_i$ for any $t\in (t_{i-1}, t_i)$.
		\item[(iii)] $m_1=z^{-p}$.
		\item[(iv)] For every $1 \leq i < n$, either $\gamma(t_i)$ belongs to the support of a ray $(\fd, f_{\fd})$ or $\gamma(t_i) \in \rho_j$ for some $j$. If $\gamma(t_i)$ belongs to the support of a ray $(\fd, f_{\fd})$, then $\gamma$ passes from one side of $\fd$ to the other. Moreover, writing $\fd=\mathbb{R}_{\geq 0}v$ with primitive $v \in B(\mathbb{Z})$, 
        $m_{i+1}$ is a monomial in the power series expansion of 
        \[ m_i f_\fd^{|\det( v, p_i)|} \,.\]
        
        If $\gamma(t_i) \in \rho_j$ for some $j$, then $\gamma$ passes from one side of $\rho_j$ to the other. Moreover,
        $m_{i+1}$ is a monomial in the power series expansion of 
        \[ m_i t^{D_j} f_{\rho_j}^{|\det( v_j, p_i)|} \,.\]
	\end{itemize}
    We refer to $m_n =c_n z^{-p_n}$ as the \emph{final monomial} of $\gamma$, and denote it as $c(\gamma)z^{s(\gamma)}$, that is, $c(\gamma)=c_n$ and $s(\gamma)=-p_n$.
 \end{defn}

\begin{defn}
	Let $\mathfrak{D}$ be a scattering diagram on the tropical cubic surface 
    $(B, \Sigma)$. Let $p_1,p_2 \in B(\Bbb{Z})\setminus \{0\}$, $p \in B(\Bbb{Z})$ and $Q \in B \setminus \{0\}$. Define the \emph{structure constants}
	\[C^{\fD, p}_{p_1,p_2}(Q)  = \sum_{(\gamma_1 , \gamma_2)} c(\gamma_{1}) c(\gamma_{2}) \in \kk[\operatorname{NE}(Y)]\] where the sum is over pairs of broken lines $(\gamma_1, \gamma_2)$ with charges $p_1$, $p_2$, common endpoint $Q$, and final monomials $c(\gamma_{1})z^{s(\gamma_{1})}$ and $c(\gamma_{2})z^{s(\gamma_{2})}$ such that $s(\gamma_1) + s(\gamma_2) = p$.
	We extend this definition to all $p_1,p_2 \in B(\Bbb{Z})$ by $C^{\fD, p}_{0, p_2}(Q) = \delta_{p_2, p}$ and $C^{\mathfrak{D}, p}_{p_1, 0}(Q) = \delta_{p_1, p}$.
\end{defn}

\begin{definition}
	A scattering diagram $\mathfrak{D}$ is \emph{consistent} 
    if the following hold:
	\begin{itemize}
		\item[(i)] For every $p_1, p_2, p \in B(\Bbb{Z})$, the structure constant $C^{\mathfrak{D}, p}_{p_1,p_2}(Q)$ does not depend on $Q$, and is therefore denoted simply as $C^{\mathfrak{D}, p}_{p_1,p_2}$ from now on.
		\item[(ii)] The product on the $\kk[\operatorname{NE}(Y)]$-module 
		\[\mathcal{A}_{\mathfrak{D}} = \bigoplus_{p \in B(\Bbb{Z})}\kk[\operatorname{NE}(Y)] \vartheta_{p}\]
		defined by
		\[\vartheta_{p_1}\vartheta_{p_2} = \sum_{p \in B(\Bbb{Z})} C^{\mathfrak{D}, p}_{p_1,p_2} \vartheta_{p}\]
		is associative. 
	\end{itemize}
    When this is the case, we refer to $\mathcal{A}_{\mathfrak{D}}$ as the \emph{algebra} of $\fD$.
\end{definition}

\begin{thm} \label{thm_consistent}
Let $(Y,D)$ be an orbifold cubic surface with three nodes. Then, 
the associated canonical scattering diagram $\mathfrak{D}_{
\text{can}}$ is consistent. 
\end{thm}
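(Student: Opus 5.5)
The plan is to follow the strategy of Gross--Hacking--Keel \cite{GHK1} for smooth log Calabi--Yau surfaces, adapted as in \cite{GHK_moduli} and \cite{bousseau_skein}. There are two possible routes. The first is to prove consistency abstractly from the geometry, through the theta function / punctured invariant description of structure constants. The second is to exploit the explicit form of $\fD_{\text{can}}$ obtained in \S\ref{Subsection 4.1} and verify consistency directly. I would take a hybrid route. The key structural input of \cite{GHK1} is that the canonical scattering diagram of a log Calabi--Yau surface can be computed from a toric model. Consistency is then deduced from the Kontsevich--Soibelman consistency of a scattering diagram in $\mathbb{R}^2$ with its initial walls, transported to $(B,\Sigma)$ via the piecewise linear identification $\nu\colon B \to \mathbb{R}^2$. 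The main point is to establish this toric-model description in the orbifold setting.

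\textbf{Step 1.} Reduce to the smooth case after a base change. The surface $Y$ is obtained from the toric pair $(\mathbb{P}^2,\overline{D})$, up to the toric blow-ups encoded in $\Sigma$, by the non-toric blow-ups of the length-two subschemes. I would pass to a toric model $(\overline{Y},\overline{D})\to$ toric pair with exceptional data. Each node contributes, through its line $E_i$ of self-intersection $-\frac12$, the orbifold analogue of an exceptional curve. Following \cite[\S 5.5]{GPS}, a $(-\frac12)$-curve through a $\mathbb{Z}/2$ point meeting $\overline D_i$ transversally contributes an initial wall with function $1+t^{2E_i}z^{-2v_i}$, in place of $1+t^{E}z^{-v_i}$. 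This matches Lemma \ref{lem_line}. Since the orbifold points are disjoint from $D$, the degeneration formula and the tropical vertex argument of \cite{GPS} apply verbatim; this is exactly the setting of \cite[\S 5.5]{GPS}. I would thereby show that $\fD_{\text{can}}$ equals $\nu^{-1}$ of the consistent completion $\mathrm{Scatter}(\fD_0)$ of the initial diagram with walls $\mathbb{R}v_i$ and functions $1+t^{2E_i}z^{-2v_i}$, suitably twisted by $t^{D_j}$ along the $\rho_j$ as in the broken line definition.

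\textbf{Step 2.} Transport consistency. Kontsevich--Soibelman consistency of $\mathrm{Scatter}(\fD_0)$ gives path-ordered-product triviality around every loop avoiding the origin. The argument of \cite[\S 3]{GHK1}, which uses the twist by $t^{D_j}$ and the kinks of $\nu$, then shows that broken-line sums $\vartheta_p(Q)$ transform by the wall-crossing automorphisms. Hence the structure constants $C^{\fD,p}_{p_1,p_2}(Q)$ are independent of $Q$. Associativity follows as in \cite[Theorem 2.34]{GHK1}: the $\vartheta_p(Q)$ for generic $Q$ are elements of a commutative ring of Laurent series, so products are associative there. The $\vartheta_p$ are linearly independent, so the induced product on $\mathcal{A}_{\fD}$ is associative. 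These steps are purely combinatorial and insensitive to the orbifold points.

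\textbf{Main obstacle.} The hard part is Step 1: matching the orbifold log invariants $N^\beta_{0,v}$ with the toric-model scattering diagram. Concretely, this means proving an orbifold version of \cite[Theorem 3.25]{GHK1}. I would first try a degeneration to the normal cone of $D$, which keeps the orbifold points in the interior of the general fibre component. I would then apply the degeneration formula of \cite{GPS}, as extended in \S 5.5 there, to identify contributions with relative invariants of the orbifold lines. As a fallback, one can check directly that the explicit diagram computed in \S\ref{Subsection 4.1} coincides with $\nu^{-1}\mathrm{Scatter}(\fD_0)$. Both are $PSL(2,\mathbb{Z})$-equivariant by Lemmas \ref{Lemma 4.4} and \ref{Lemma 4.5}, so it suffices to compare the ray $\rho_1$, using Lemma \ref{Lemma 4.1} and the $2$-periodicity of $1+t^{2E_i}z^{-2v_i}$.
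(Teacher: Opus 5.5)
Your main route is essentially the paper's first proof. You extend the toric-model argument of \cite{GHK1} to the orbifold setting via the orbifold blow-ups of \cite[\S 5.5]{GPS}, so that each $(-\tfrac{1}{2})$-curve $E_i$ contributes the wall $1+t^{2E_i}z^{-2v_i}$. You then transport Kontsevich--Soibelman consistency, which is the same level of detail the paper itself gives.

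The paper's second proof instead goes through Remark \ref{remark_D11}: it matches the explicitly computed $\fD_{\text{can}}$ with the diagram $\fD_{1,1}$, already proven consistent in \cite[Theorem 3.17]{bousseau_skein}. Your fallback cannot replace this as written. Lemmas \ref{Lemma 4.4} and \ref{Lemma 4.5} give $PSL(2,\mathbb{Z})$-equivariance only of $\fD_{\text{can}}$, not of $\nu^{-1}\mathrm{Scatter}(\fD_0)$. Moreover, determining the $\rho_1$ ray of the completion is itself the nontrivial scattering computation.
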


\begin{proof}
There are two possible proofs. The first is to generalize the proof of \cite{GHK1}, adapting the arguments involving log Gromov--Witten invariants to the orbifold setting by using \cite[\S 5.5]{GPS}. Alternatively, one can use Remark \ref{remark_D11}, which identifies the quantum version of the canonical scattering diagram with the quantum scattering diagram $\fD_{1,1}$ of \cite{bousseau_skein}; the latter is consistent by \cite[Theorem 3.17]{bousseau_skein}, and so its classical limit is consistent too.
\end{proof}

\begin{definition}
    Let $(Y,D)$ be an orbifold cubic surface with three nodes. 
    The \emph{intrinsic mirror algebra} $R_{(Y,D)}$ is the algebra of the associated
    canonical scattering diagram $\mathfrak{D}_{\text{can}}$.  
\end{definition}

\subsection{Calculation of the intrinsic mirror algebra}
\label{sec_calculation_intrinsic}

In this section, we calculate the intrinsic mirror algebra $R_{(Y,D)}$ of an orbifold cubic surface with three nodes.

Let $(B, \Sigma)$ be the tropical cubic surface as in \S\ref{Section 2}.
Following \cite{bousseau_skein, GHKScubic}, we consider the unique continuous piecewise linear $F: B \rightarrow \Bbb{R}$ such that $F(v_i)=1$ for all $1 \leq i \leq 3$. According to  \cite[Proposition 2.7]{bousseau_skein}, for every broken line $\gamma$, the function $t \mapsto dF(\gamma'(t))$ is decreasing. Moreover, we have $F(r) \leq F(p_1)+F(p_2)$ if  $\vartheta_r$ appears in the product $\vartheta_{p_1}\vartheta_{p_2}$. 
These results are used in the proofs of the following lemmas to bound broken lines, as in \cite[\S 3]{GHKScubic}.

\begin{lemma}
    For every $\{i,j,k\}= \{1,2,3\}$, we have
	\[\vartheta^2_{v_i} = \vartheta_{2v_i} + 2 t^{D_j+D_k} \,.\]
    \label{Lemma 5.9}
\end{lemma}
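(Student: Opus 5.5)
By the $\operatorname{PSL}(2,\mathbb{Z})$-symmetry, namely the action of $S$ in Lemma~\ref{Lemma 4.4}, which permutes $v_1,v_2,v_3$ cyclically together with $E_1,E_2,E_3$ and hence $D_1,D_2,D_3$, it is enough to treat $i=1$, $\{j,k\}=\{2,3\}$. The plan is to compute the structure constants $C^{p}_{v_1,v_1}$ directly from broken lines for $\fD_{\text{can}}$, following \cite[\S 3]{GHKScubic} and \cite{bousseau_skein}. Since the diagram is consistent (Theorem~\ref{thm_consistent}), the endpoint $Q$ can be chosen at our convenience.

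First, I bound the possible $p$. By \cite[Proposition 2.7]{bousseau_skein}, every $\vartheta_p$ appearing in $\vartheta_{v_1}^2$ has $F(p)\le F(v_1)+F(v_1)=2$. This leaves $p=0$, the $p$ with $F(p)=1$ (that is, $v_1,v_2,v_3$), and the finitely many integral points with $F(p)=2$. I also record how the exponent of a broken line with charge $v_1$ changes. Since $dF(\gamma')$ is decreasing, a bend can only lower $F$ of the exponent, and every wall-crossing factor multiplies by some $z^{-mv}$ with $v$ primitive and $m\ge 1$, or by $t^{D_j}$ when crossing $\rho_j$. Consequently a broken line with charge $v_1$ can bend at most by an amount that brings $F$ from $1$ down to $0$ or $-1$, and only a handful of bends are possible.

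Next, I fix $Q$ to be a generic point very close to $\rho_1$, say in $\sigma_{12}$, far from the origin, and enumerate the broken lines with charge $v_1$ ending at $Q$. There is the straight broken line with final monomial $z^{v_1}$. The remaining ones come in from infinity parallel to $\rho_1$ and wrap around the singularity, crossing $\rho_2$ and $\rho_3$. In the cone-by-cone coordinates, the kink conventions $t^{D_j} f_{\rho_j}^{|\det(v_j,p_i)|}$ then turn the monomial into one with exponent $-v_1$ and coefficient $t^{D_2+D_3}$ (taking the leading term $1$ of the $f$'s). I will check that bending at a nontrivial wall forces $F(s(\gamma))<-1$, or produces $t$-degree incompatible with pairing, so these lines do not contribute for $F(p)\ge 0$.

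Then I pair the broken lines. The pair (straight, straight) gives $z^{2v_1}$ with coefficient $1$, hence $\vartheta_{2v_1}$ with coefficient $1$. The pairs (straight, wrapped) and (wrapped, straight) give $s(\gamma_1)+s(\gamma_2)=0$, each with coefficient $t^{D_2+D_3}$, so together they contribute $2t^{D_2+D_3}\vartheta_0$. Finally I verify that no pair yields $p=v_j$ or another point with $F(p)\le 2$, so that those coefficients vanish.

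The main obstacle is this last step. To rule out contributions such as $\vartheta_{v_2},\vartheta_{v_3}$, or other rays' monomials like $t^{2L_1^{v}}z^{-2v}$, I have to check carefully that every bent broken line ending near $\rho_1$ has final exponent with $F$ too small. I will first try to settle this by moving $Q$ generically near $\rho_1$ and using the convexity of $F$ together with the fact that all $f_\fd$ involve only $z^{-2v}$, that is, even multiples. If this fails, I will compare $t$-degrees via $\sD$-intersection numbers to exclude the unwanted terms.
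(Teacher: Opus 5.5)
Your proposal is correct and follows essentially the paper's argument. The shared steps are: reduction to $i=1$ by the cyclic symmetry, the bound $F(r)\le 2$ from \cite[Proposition 2.7]{bousseau_skein}, the observation that no bending can occur, the single unbent pair giving $\vartheta_{2v_1}$, and the two configurations (one line straight, the other wrapping around the origin across $\rho_3$ and $\rho_2$ without bending) each giving $t^{D_2+D_3}\vartheta_0$.

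The only difference is that you fix $Q$ and enumerate broken lines before pairing, rather than splitting into cases on $F(r)$. The step you flag as the main obstacle is immediate from that enumeration once $Q$ is taken in the cone of $r$: only unbent lines can contribute, their exponent sums produce no $v_j$ and no point with $F=2$ other than $2v_1$, and the wrap-around/wrap-around pair gives $-2v_1$ rather than $2v_1$. This is what the paper obtains by citing \cite{GHKScubic} for the case $F(r)=1$.
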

\begin{proof}
We begin by establishing the result for $\vartheta_{v_1}^2$. 
The general result then follows by $\mathbb{Z}/3\mathbb{Z}$-symmetry.
Suppose that $\vartheta_r$ appears in the product $\vartheta_{v_1}^2$. Then
\[
F(r)\leq F(v_1)+F(v_1)=2.
\]
Moreover, since $F(s)\geq 0$ for every $s\in B(\mathbb{Z})$, it follows that
\[
0\leq F(r)\leq 2.
\]
We consider the possible values of $F(r)$ separately.

\begin{enumerate}
    \item The case $F(r)=2$. In this case, neither of the two broken lines $\gamma_1$ and $\gamma_2$, both carrying charge $v_1$, can undergo any bending. The only contribution arising in this way is $\vartheta_{2v_1}$.
        \begin{figure}[h!]
    \centering
    \includegraphics[width=5cm]{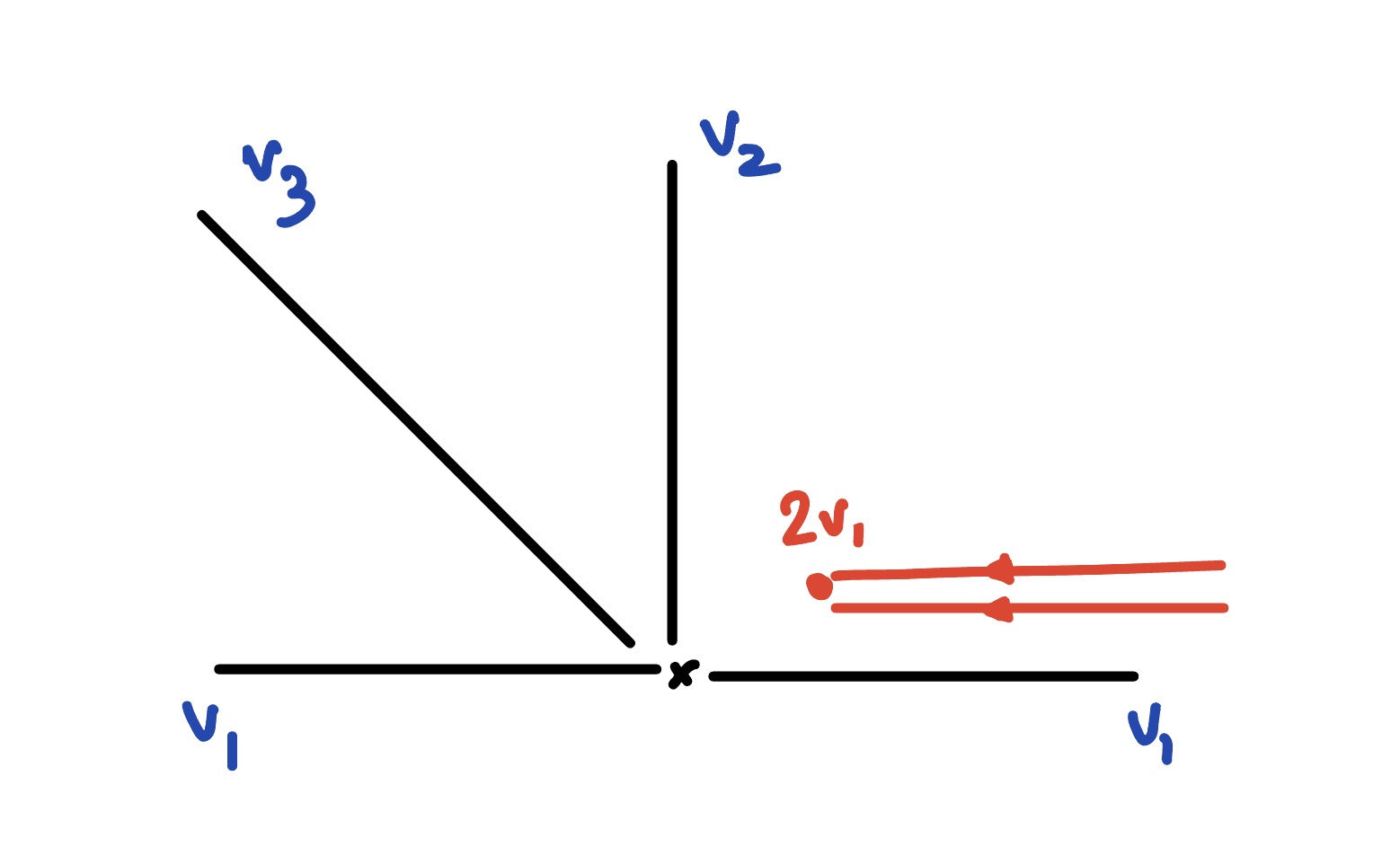}
    \caption{Contribution of $\vartheta_{2v_1}$ to $\vartheta_{v_1}^2$}
    \label{Figure 2}
    \end{figure}
        \item $F(r) = 1$:  This is the case in which no bending can occur, although one of the broken lines may cross a ray $\mathbb{R}_{\geq 0}v_i$ for some $i$.
Moreover, the only elements $s\in B(\mathbb{Z})$ satisfying $F(s)=1$ are $v_i$, for $i=1,2,3$. As shown in \cite{GHKScubic}, this case does not give rise to any contribution from broken lines.
        \item $F(r) = 0$: 
        The only element $s\in B(\mathbb{Z})$ for which $F(s)=0$ is the origin, $s=0$. In this case, no bending can occur. There are two possible configurations, each contributing $t^{D_2+D_3}\vartheta_0$. Hence, their total contribution is
$2t^{D_2+D_3}\vartheta_0=2t^{D_2+D_3}$. 
    
        \begin{figure}[h!]
    \centering
    \includegraphics[width=5cm]{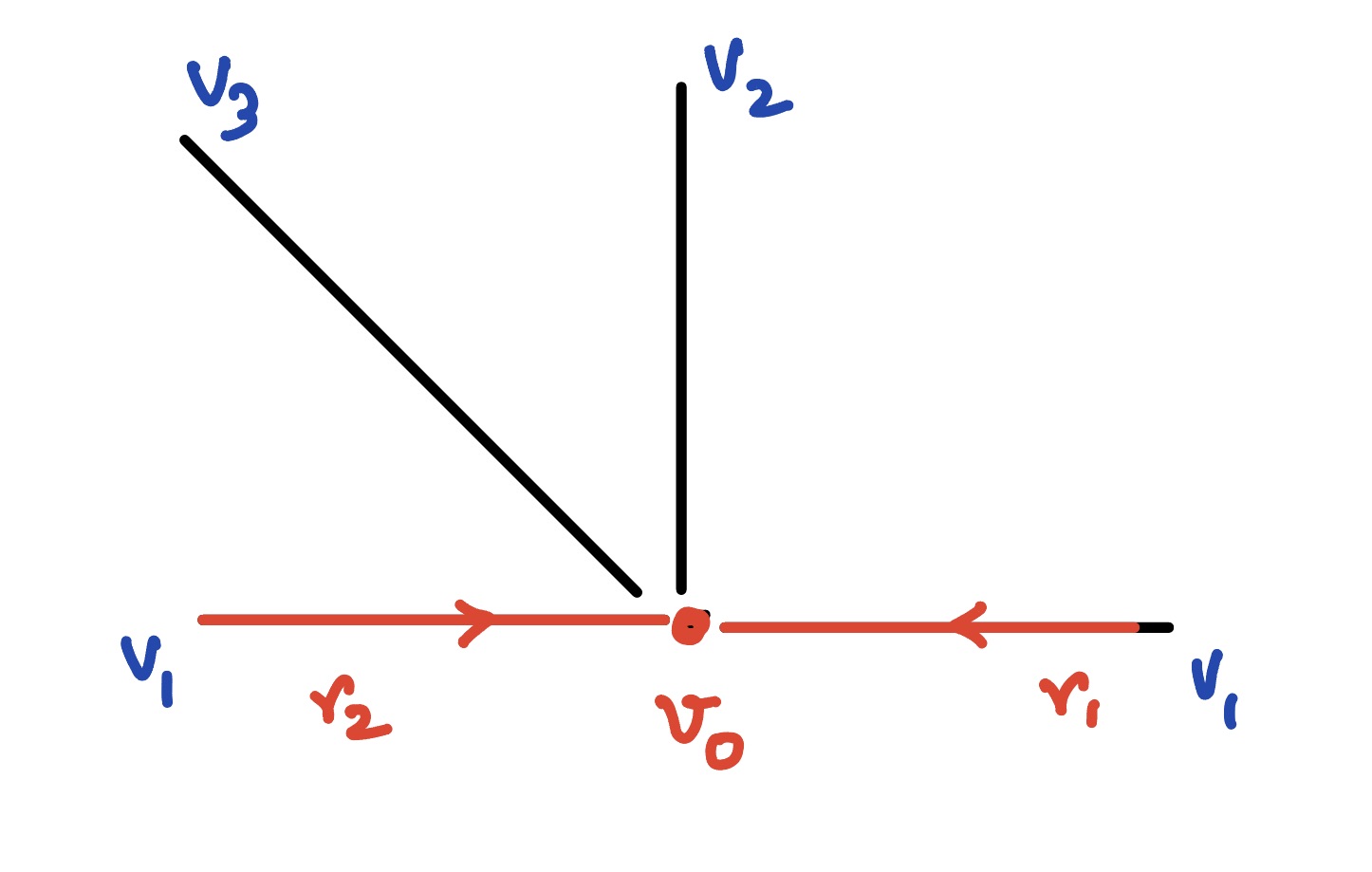}
     \caption{Contribution of $1=\vartheta_{0}$ to $\vartheta_{v_1}^2$}
     \label{Figure 3}
    \includegraphics[width=5cm]{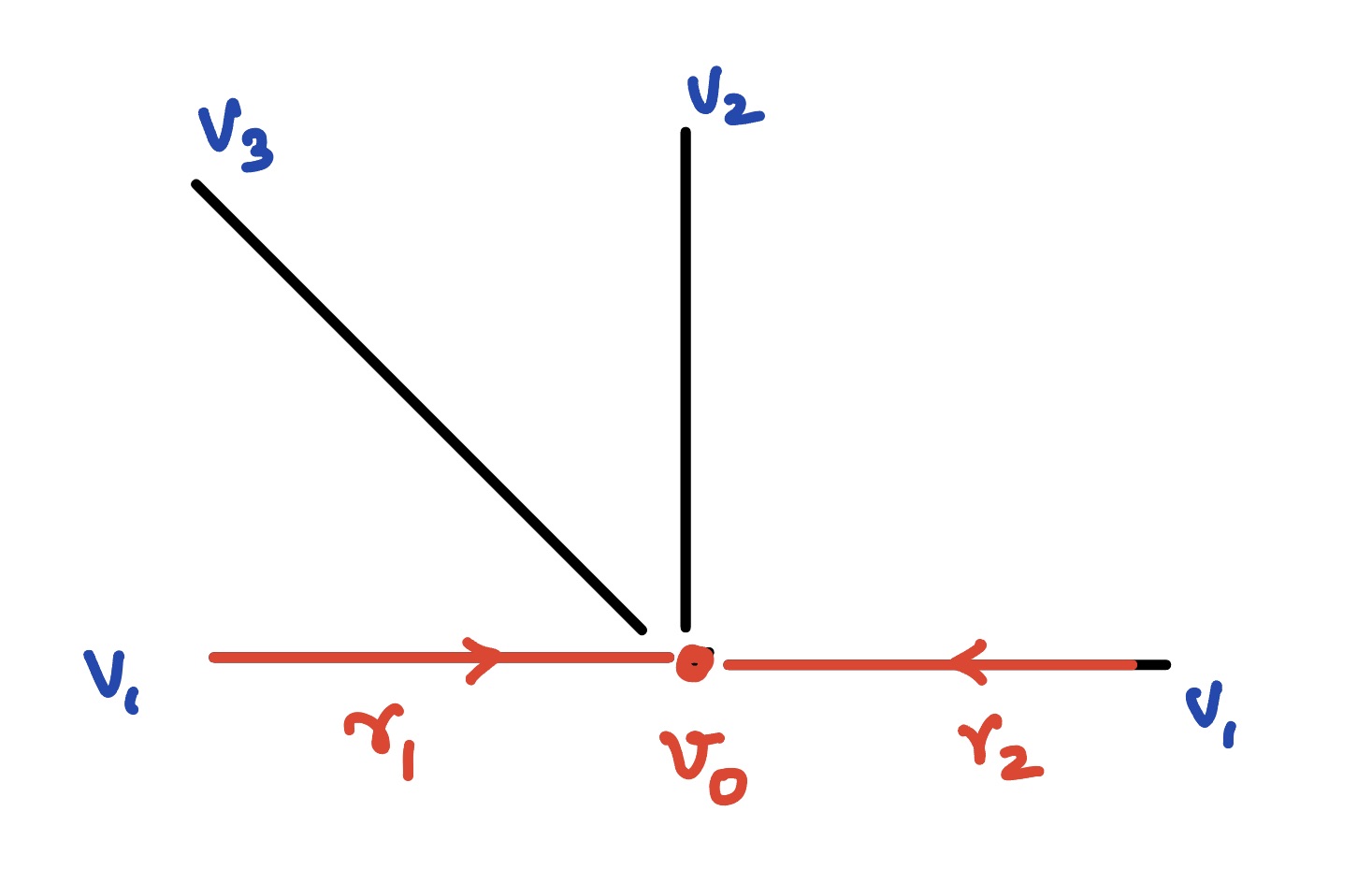}
     \caption{Contribution of $\vartheta_{0}$ to $\vartheta_{v_1}^2$}
     \label{Figure 4}
    \end{figure}
    \end{enumerate}
\end{proof}
\begin{lemma}
                   \[ \vartheta_{v_{1}} \vartheta_{v_{2}} = \vartheta_{v_1 + v_2} + t^{D_3} \vartheta_{v_3}\,. \]
                   \label{Lemma 5.10}
\end{lemma}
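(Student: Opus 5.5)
We compute the product with the structure constants of the canonical scattering diagram, following the proof of Lemma \ref{Lemma 5.9} and \cite[\S 3]{GHKScubic}. If $\vartheta_r$ appears in $\vartheta_{v_1}\vartheta_{v_2}$, then $F(r)\leq F(v_1)+F(v_2)=2$. We consider $F(r)=2$, $F(r)=1$ and $F(r)=0$ separately. For each case we choose the endpoint $Q$ conveniently; by Theorem \ref{thm_consistent} the structure constants do not depend on $Q$. We take $Q$ close to $r$ in the interior of the relevant cone, so that broken lines ending with final direction $r$ are easy to enumerate.

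\emph{Case $F(r)=2$.} Since $t\mapsto dF(\gamma'(t))$ is decreasing and the total budget is exactly $F(v_1)+F(v_2)$, neither broken line can bend. Both are therefore straight lines with directions $v_1$ and $v_2$. Taking $Q$ in the interior of $\sigma_{12}$ near $v_1+v_2$, the two straight lines stay in $\sigma_{12}$. They cross no ray $\rho_j$, so no factor $t^{D_j}$ appears. This gives the term $\vartheta_{v_1+v_2}$ with coefficient $1$.

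\emph{Case $F(r)=1$.} Here $r\in\{v_1,v_2,v_3\}$ and again no bending is allowed. The broken lines may however cross rays $\rho_j$ and pick up factors $t^{D_j}$ from condition (iv), where we take the constant term of $f_{\rho_j}$. We put $Q$ near $v_3$ inside $\sigma_{23}$ (or $\sigma_{31}$). The line with charge $v_2$ arrives unbent with final exponent $v_2$. The line with charge $v_1$ must wrap around through $\sigma_{31}$ across $\rho_3$. Using the affine identification across $\rho_3$ (in the chart of $\sigma_{23}\cup\sigma_{31}$ one has $v_1+v_2=v_3$, i.e.\ $v_1 = v_3 - v_2$ there), the final exponents should sum to $v_3$, with coefficient $t^{D_3}$. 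This matches the picture in \cite{GHKScubic}, where the smooth-cubic computation gives the same term. We must check that the two other values $r=v_1,v_2$ give no contribution, by the same geometric argument as in Lemma \ref{Lemma 5.9}. This step is where we expect the main difficulty: one must track carefully the monodromy of the affine structure around the singularity $0$ of $B$ and the orientation conventions in $|\det(v_j,p_i)|$. We would first reproduce the corresponding figure from \cite{GHKScubic}, since the tropicalization is identical and the non-bending contributions only involve the $t^{D_j}$ factors, not the orbifold-specific parts of $f_{\rho_j}$.

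\emph{Case $F(r)=0$.} Then $r=0$, and the two final exponents must cancel, $s(\gamma_1)=-s(\gamma_2)$. Unbent lines with charges $v_1$ and $v_2$ cannot have opposite directions at a common point. By analogy with Lemma \ref{Lemma 5.9} (where opposite directions arise only for equal charges) and with the monodromy computation, this case yields no contribution.

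Combining the three cases gives $\vartheta_{v_1}\vartheta_{v_2}=\vartheta_{v_1+v_2}+t^{D_3}\vartheta_{v_3}$. The orbifold-specific factors of the $f_\fd$ never enter, because no bending occurs in any of the cases.
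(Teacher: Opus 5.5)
Your case split by $F(r)$ follows the paper's proof. Your contributions for $F(r)=2$ and for $r=v_3$ are also the paper's: with $Q\in\sigma_{23}$ near $v_3$, the line of charge $v_1$ enters from $\sigma_{31}$ and crosses $\rho_3$ once, giving final exponents $(-1,0)$ and $(0,1)$ and coefficient $t^{D_3}$. If you put $Q$ in $\sigma_{31}$ instead, it is the line of charge $v_2$ that crosses $\rho_3$, with the same coefficient.

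However, the case $F(r)=0$ has a genuine gap. You only show that \emph{unbent} lines of charges $v_1,v_2$ cannot have opposite final exponents, and then assert that no bending occurs in any case. For $r=0$ the $F$-inequality leaves a total budget of exactly $F(v_1)+F(v_2)-F(0)=2$. A single bend by the $z^{-2v_j}$ term of $f_{\rho_j}$ uses exactly $2F(v_j)=2$ of it, so the bound you invoke does not exclude bending. This is the same mechanism that produces $t^H+t^{2D-H}+2t^D$ in Lemma~\ref{Lemma 5.11}. The paper's proof explicitly examines a bent candidate in this case and rules it out; you need an argument too. One fix: a bend on $\rho_j$ happens while the line crosses $\rho_j$, and the kink of $F$ along $\rho_j$ uses a further $|\det(v_j,\gamma')|\geq 1$. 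So such a bend costs at least $3>2$, while bends on any other ray cost at least $2F(v_\fd)\geq 4$, since $v_1,v_2,v_3$ are the only primitive points with $F=1$.

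In the case $F(r)=1$, you leave the exclusion of $r=v_1,v_2$ open as the ``main difficulty,'' but it is immediate. Lift along the double cover $\mathbb{R}^2\setminus\{0\}\to B\setminus\{0\}$. Unbent lines of charge $v_1$ (resp.\ $v_2$) ending at $Q$ then correspond to rays from a lift of $Q$ in directions $\pm(1,0)$ (resp.\ $\pm(0,1)$). So the possible sums of final exponents are $(\pm1,\pm1)$, never $\pm v_1$ or $\pm v_2$.

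A parity argument settles both remaining points at once, bent or not. Every $f_\fd$ here is a series in $z^{-2v_\fd}$, and the monodromy $-\mathrm{Id}$ is trivial modulo $2$. Hence the final exponent of any broken line is congruent to its charge modulo $2$, and so $s(\gamma_1)+s(\gamma_2)\equiv(1,1)$ modulo $2$. This excludes $r=0$, $v_1$ and $v_2$.
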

\begin{proof}
We necessarily have
$0\leq F(r)\leq 2$. 
As before, we distinguish three cases according to the value of $F(r)$:
\begin{enumerate}
    \item $F(r)=2$: In this case, no bending can occur. Consequently, the only possible contribution is $\vartheta_{v_1+v_2}$.
                \begin{figure}[h!]
    \centering
    \includegraphics[width=5cm]{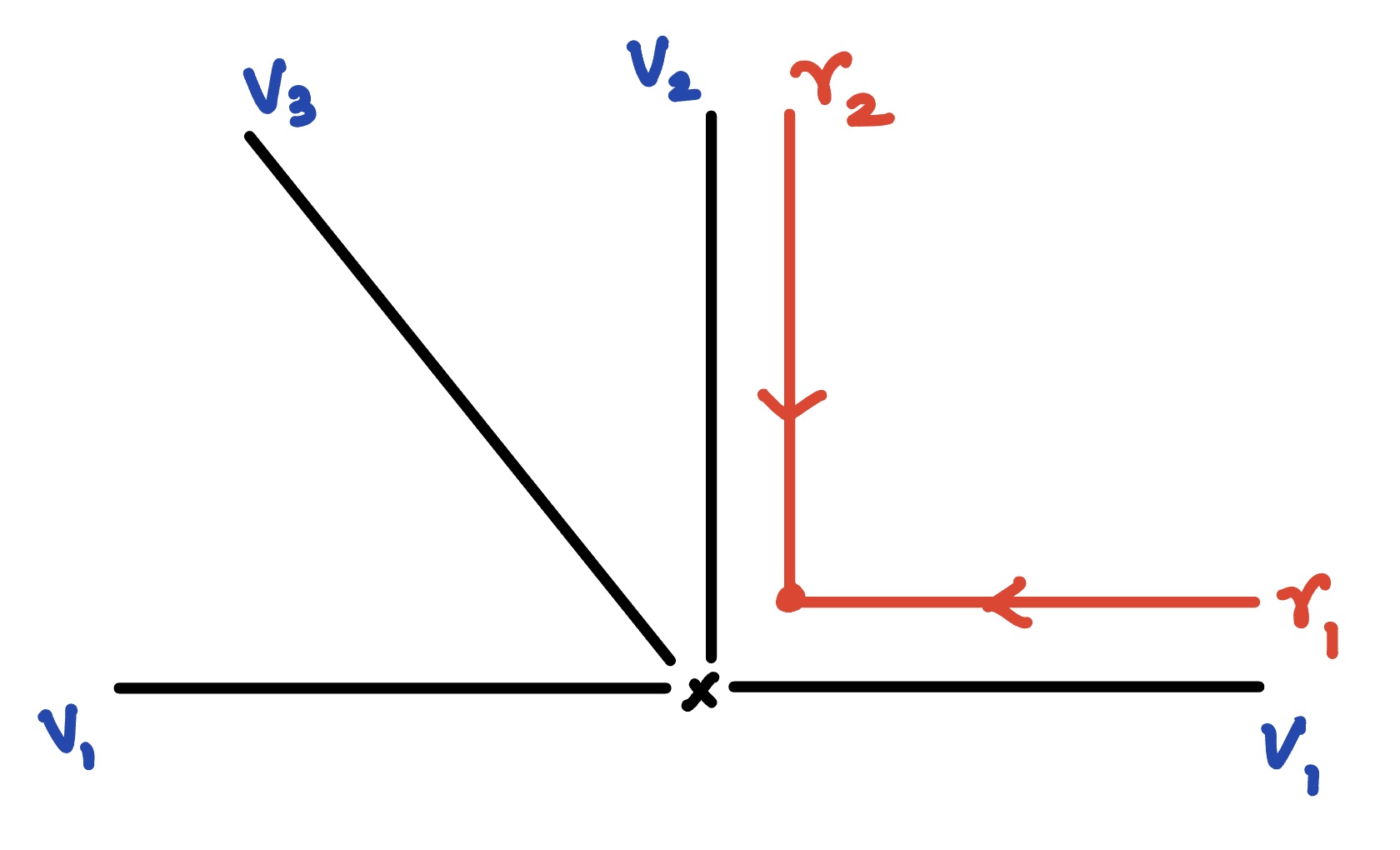}
     \caption{Contribution of $\vartheta_{v_1+v_2}$ to $\vartheta_{v_1}\vartheta_{v_2}$}
     \label{Figure 5}
    \end{figure}
        \item $F(r) = 1$: The only elements of $B(\Bbb{Z})$ satisfying this condition are $r = v_{1},v_{2}$ and $v_{3}$. Moreover, there cannot be any bending but broken lines can cross rays of $B$. This implies that the only contribution is from $\vartheta_{v_3}$, with coefficient $t^{D_3}$.
                \begin{figure}[h!]
    \centering
    \includegraphics[width=5cm]{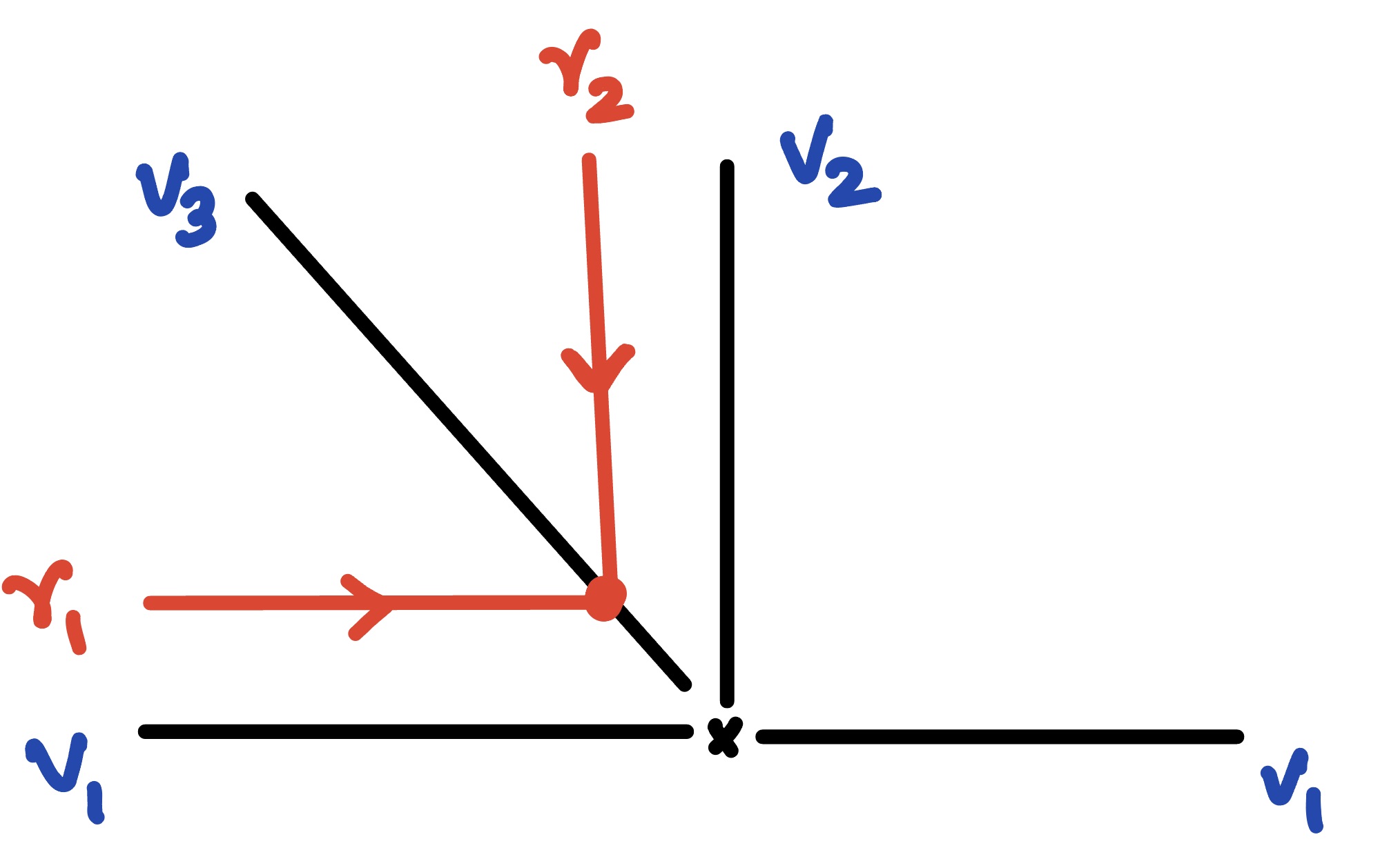}
      \caption{Contribution of $\vartheta_{v_3}$ to $\vartheta_{v_1}\vartheta_{v_2}$}
      \label{Figure 6}
       \end{figure}
        \item $F(r) = 0$: The only contribution can be from $\gamma_{1}$ along $\Bbb{R}_{\geq 0}v_2$ with charge $z^{(0,1)}$, but this implies a bending in the direction $(-2,1)$, which cannot happen. Hence there is no contribution from the case $F(r) = 0$. 
    \end{enumerate}
\end{proof}

\begin{lemma}
   $$\vartheta_{v_1 + v_2} \vartheta_{v_3}  = 
     t^{D_1} \vartheta_{2v_1} +  
    t^{D_2} \vartheta_{2v_2} + t^{H} + t^{2D-H} + 2 t^D \,.$$
   \label{Lemma 5.11}
\end{lemma}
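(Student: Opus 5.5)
The plan is to follow the same strategy as in Lemmas \ref{Lemma 5.9} and \ref{Lemma 5.10}, and \cite[\S 3]{GHKScubic}: bound the possible outputs $\vartheta_r$ using the piecewise linear function $F$, then enumerate pairs of broken lines $(\gamma_1,\gamma_2)$ with charges $v_1+v_2$ and $v_3$ ending at a conveniently chosen generic point $Q$. Since $F(v_1+v_2)=2$ and $F(v_3)=1$, any $\vartheta_r$ appearing satisfies $0\le F(r)\le 3$. By consistency (Theorem \ref{thm_consistent}), the structure constants do not depend on $Q$. I will therefore fix $Q$ close to the ray $\rho_1$ inside $\sigma_{12}$, or wherever the enumeration turns out simplest, and organise the computation by the value of $F(r)$.

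\textbf{Case $F(r)=3$.} No bending is allowed. Because $v_1+v_2$ and $v_3$ lie in different cones with a kink along $\rho_2$, the straight broken lines cannot combine to any $r$ with $F(r)=3$. More precisely, $v_1+v_2+v_3$ computed across the cone boundaries gives a vector with $F<3$, so there is no contribution. This matches the absence of a $\vartheta_{v_1+v_2+v_3}$-type term.

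\textbf{Cases $F(r)=2$ and $F(r)=1$.} For $F(r)=2$, the candidates are $r=2v_1$, $2v_2$, $2v_3$, and $v_i+v_{i+1}$. The contributions come from unbent broken lines crossing a ray $\rho_j$, which picks up the factor $t^{D_j}$ by the rule in the definition of broken lines. Transporting $v_3$ across $\rho_1$ or $\rho_2$ gives $v_3=v_2-v_1$ in chart $\sigma_{12}$, and $(v_1+v_2)+(\text{transported } v_3)$ yields $2v_2$ with coefficient $t^{D_2}$, or symmetrically $2v_1$ with coefficient $t^{D_1}$. I need to check that no other combination survives, in particular that $2v_3$ and the $v_i+v_{i+1}$ do not occur. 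For $F(r)=1$, the only candidates are the $v_i$. A bend of $F$-drop $1$ would require a monomial $z^{-v}$ of $f_{\fd}$ with $F$-value $1$, but all rays' functions are series in $z^{-2v}$. So no single bend drops $F$ by an odd amount, and the parity of $F$ forces these contributions to vanish, as in case (2) of Lemma \ref{Lemma 5.9}.

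\textbf{Case $F(r)=0$ (main obstacle).} This constant term, $t^H+t^{2D-H}+2t^D$, is the main obstacle, since here genuine bending with the nontrivial wall functions $f_{\rho_j}$ is required. Writing $D=D_1+D_2+D_3$, I expect the following:
\begin{itemize}
\item The term $2t^{D}$ comes from two unbent configurations, analogous to Figures \ref{Figure 3} and \ref{Figure 4}, where $\gamma_1$ and $\gamma_2$ arrive from opposite directions after wrapping around the singularity. These pick up $t^{D_1}t^{D_2}t^{D_3}$, or an equivalent product of crossing factors.
\item The terms $t^{H}$ and $t^{2D-H}$ come from a single bend of $\gamma_1$ (charge $v_1+v_2$) at $\rho_1$ or $\rho_2$. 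The exponent is $|\det(v_j,p)|=1$, and we take the coefficient of $z^{-2v_j}$ in $t^{D_j}f_{\rho_j}$, namely $t^{D_j}\bigl(t^{2L_1^{v_j}}+t^{2L_2^{v_j}}+2t^{C^{v_j}}\bigr)$.
\end{itemize}
The bookkeeping requires showing that, after choosing $Q$ appropriately, exactly one such bend is geometrically allowed. I then need to reconcile the classes. For instance, with $j=1$ one has $D_1+2E_1=H$ and $D_1+2L_2^{v_1}=2D-H$, using $D_i=H-2E_i$ and $D=3H-2E_1-2E_2-2E_3$. The term $2t^{D_1+C^{v_1}}=2t^{D}$ should then either be the source of $2t^D$ or combine with the unbent configurations; this coefficient has to be checked carefully against the final answer $+2t^D$.

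Throughout, I would also use the decreasing property of $dF(\gamma'(t))$ from \cite[Proposition 2.7]{bousseau_skein} to rule out multiple bends and bends on rays other than $\rho_1$, $\rho_2$, $\rho_3$.
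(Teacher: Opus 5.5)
Your overall plan is the paper's: bound by $F$, split according to $F(r)$, and get the constant term from a single bend at a boundary ray using the $z^{-2v_j}$-coefficient of $f_{\rho_j}$. However, the decisive case $F(r)=0$ is left unresolved, and the picture you propose there is wrong.

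\textbf{The case $F(r)=0$.} There are no unbent configurations contributing to $\vartheta_0$. In a pair of unbent broken lines, the two final exponents are parallel transports to $Q$ of $v_1+v_2$ and of $v_3$. Transport in $B\setminus\{0\}$ is linear up to the sign coming from the monodromy $-1$, so these two vectors stay linearly independent and cannot add up to $0$. The two configurations in the proof of Lemma~\ref{Lemma 5.9} exist only because both charges equal $v_1$: one line arrives with $v_1$ and the other, after going around the singularity, with $-v_1$.

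Consequently the whole constant term, including $2t^D$, comes from one bent configuration. For $Q$ in $\sigma_{12}$ between $\rho_1$ and $\mathbb{R}_{\geq 0}(v_1+v_2)$, the pair is as follows. The line $\gamma_1$ is unbent with $s(\gamma_1)=v_1+v_2$. The line $\gamma_2$ enters through $\sigma_{31}$, crosses $\rho_1$, and bends from $v_3$ to $-(v_1+v_2)$. It picks up $t^{D_1}$ times the $z^{-2v_1}$-coefficient $t^{2L_1^{v_1}}+t^{2L_2^{v_1}}+2t^{C^{v_1}}$ of $f_{\rho_1}$. For $Q\in\sigma_{31}$ it is instead $\gamma_1$ that bends at $\rho_1$, as in your version.

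The coefficient $2$ of $t^D=t^{D_1+C^{v_1}}$ comes entirely from the double pole $(1-t^{C^{v_1}}z^{-2v_1})^{-2}$. Keeping both of your proposed sources would give $4t^D$. As written, the proposal therefore does not determine the coefficient of $t^D$, and so does not prove the lemma.

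\textbf{The case $F(r)=1$.} Your parity argument does not work. Going from $F(v_1+v_2)+F(v_3)=3$ to $F(r)=1$ is a drop of $2$, which ``no odd drop'' does not exclude. Moreover, the $F(r)=2$ terms you correctly find come from a drop of $1$ caused by crossing $\rho_1$ or $\rho_2$ without any bend, so the parity of $F$ is not preserved.

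The right invariant is the class of the exponent in $\mathbb{Z}^2/2\mathbb{Z}^2$:
\begin{itemize}
\item it is well defined on $B$, because the monodromy $-1$ is trivial modulo $2$;
\item it is unchanged when a boundary ray $\rho_j$ is crossed without bending;
\item it changes by an element of $2\mathbb{Z}^2$ at every bend, including bends on the $\rho_j$, because every $f_{\fd}$ is a series in $z^{-2v_{\fd}}$.
\end{itemize}
Hence $r\equiv (v_1+v_2)+v_3\equiv 0 \bmod 2$. This rules out $F(r)\in\{1,3\}$ and the candidates $v_i+v_{i+1}$ with $F(r)=2$ in one stroke. Your list of $F(r)=2$ candidates is in fact more complete than the one given in the text.
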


\begin{proof}
As before, we have
$0\leq F(r)\leq 3$.
We consider the possible values of $F(r)$ separately:
\begin{enumerate}
    \item $F(r)=3$: This case gives no contribution, by the same argument as in \cite[Lemma 3.6]{GHKScubic}.

    \item $F(r)=2$: The only elements $r\in B(\mathbb{Z})$ satisfying this condition are
$r=2v_1, r=2v_2,r=2v_3$. 
No bending can occur, and so there are only two possible configurations, contributing $t^{D_1}\vartheta_{2v_1}$ and $t^{D_2}\vartheta_{2v_2}$, respectively.
        \begin{figure}[h!]
    \centering
    \includegraphics[width=9cm]{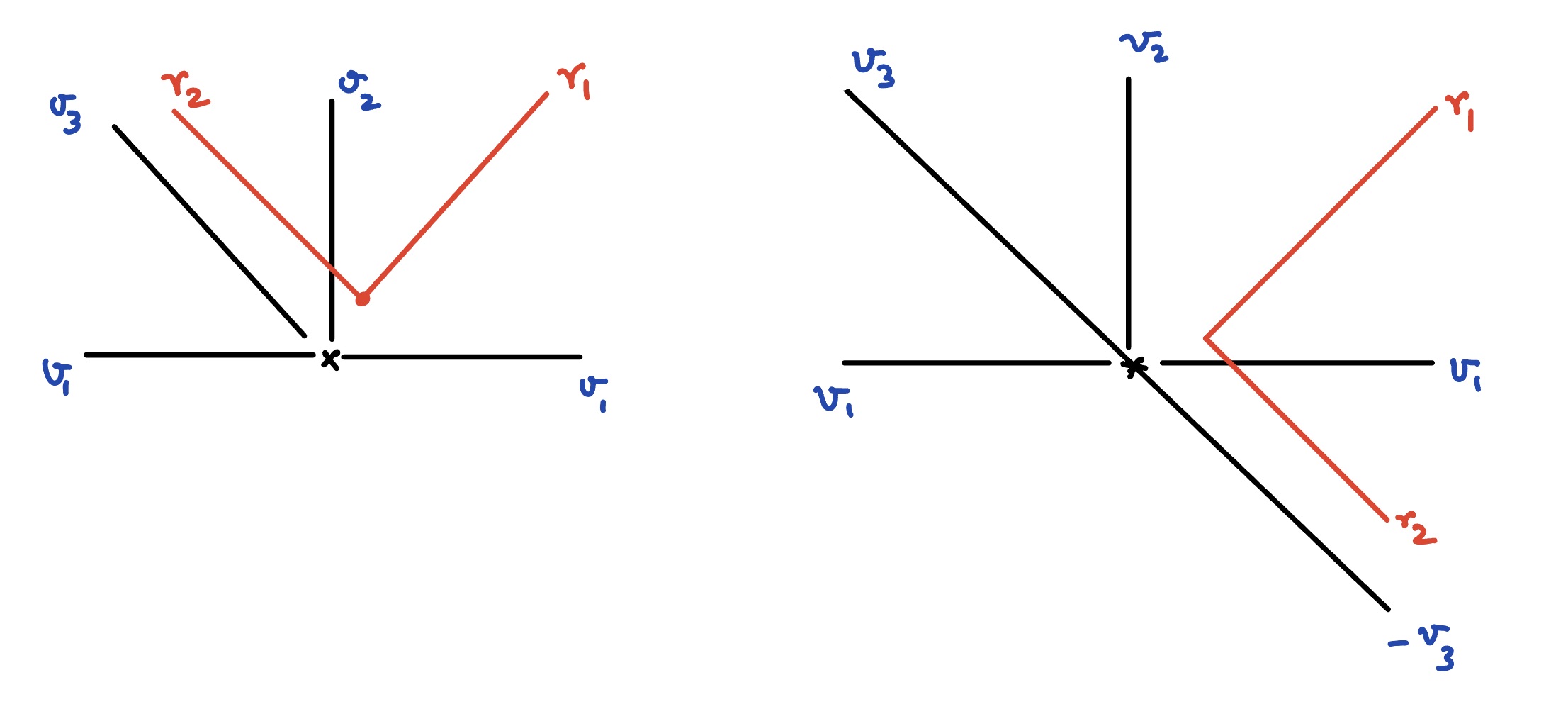}
    \caption{Contribution of $\vartheta_{2v_2}$ and $\vartheta_{2v_1}$ to $\vartheta_{v_1+v_2}\vartheta_{v_3}$}
    \label{Figure 7}
    \end{figure}
        \item $F(r) = 1$: There are no contributions in this case. 
        \item $F(r) = 0$: The only contribution is illustrated in the figure below. Using Lemma \ref{Lemma 4.1} for the function attached to $\rho_1$,  we obtain 
\[ t^{D_1} (t^{2L_1^{v_1}} + t^{2L_2^{v_1}} + 2 t^{C^{v_1}}) = t^{D_1+2E_1} +t^{D_1+4H-4E_2-4E_3-2E_1}+2t^{D_1+2H-2E_2-2E_3} \,,\]
        which, using $D_1=H-2E_1$, $D_2=H-2E_2$, $D_3=H-2E_3$, and $D=D_1+D_2+D_3=3H-2E_1-2E_2-2E_3$, equals
        \[t^{H} + t^{2D-H} + 2 t^D\,.\]
        \begin{figure}[h!]
    \centering
    \includegraphics[width=7cm]{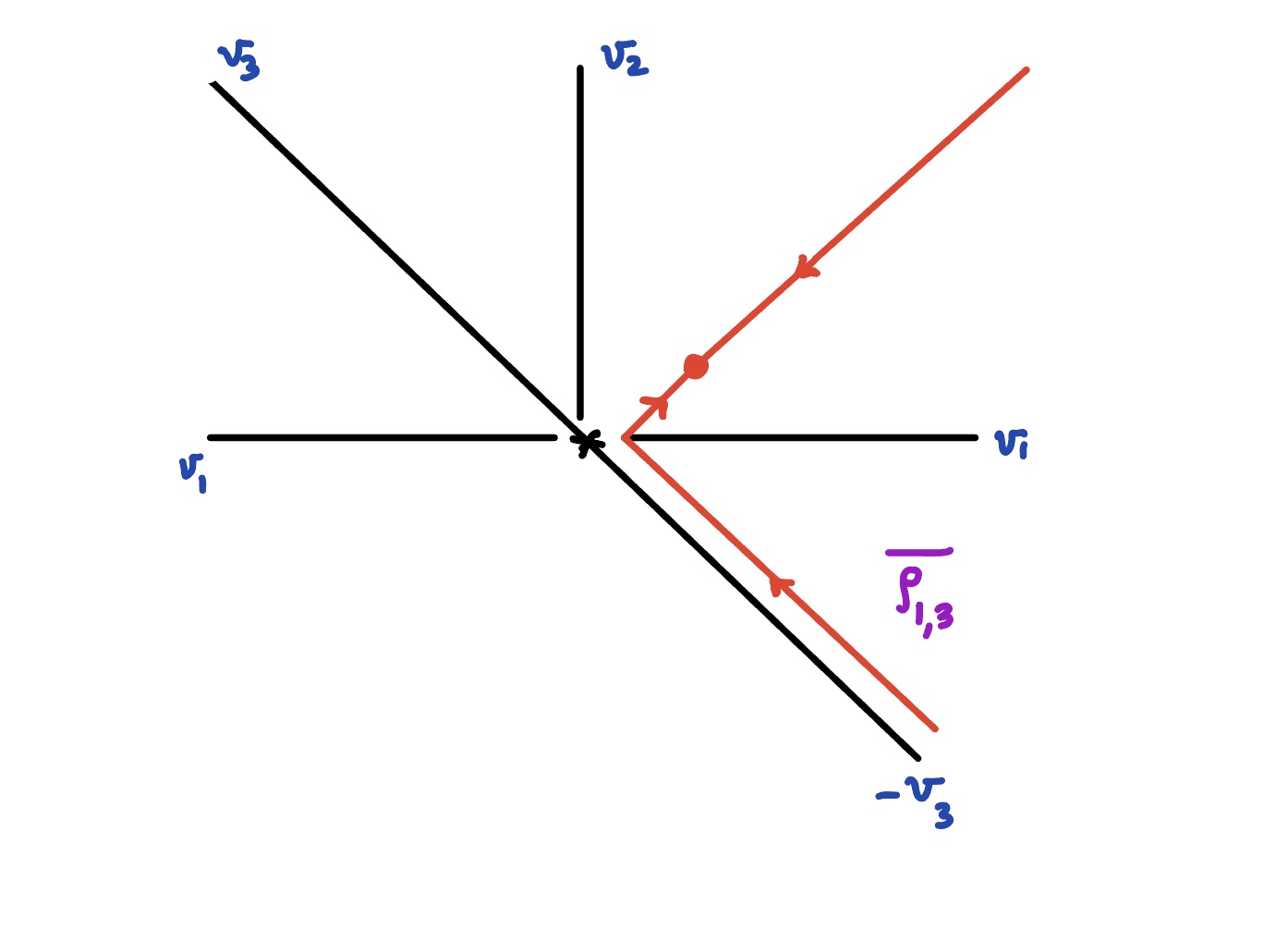}
    \caption{Contribution of $1=\var_0$ to $\vartheta_{v_1+v_2}\vartheta_{v_3}$}
    \label{Figure 8}
    \end{figure}
    \end{enumerate}
\end{proof}

Finally, the following Theorem \ref{thm_main_1}, stated as Theorem \ref{thm_main_intro_1} in the introduction, determines 
the intrinsic mirror algebra of an orbifold cubic surface with three nodes.

\begin{thm}
\label{thm_main_1}
Let $(Y,D)$ be an orbifold cubic surface over $\kk$, where $Y$ is a projective cubic surface with three nodes and $D \subset Y$ is a triangle of lines avoiding the nodes. 
The intrinsic mirror algebra
$R_{(Y,D)}$
of $(Y,D)$ 
is the commutative $\kk[\operatorname{NE}(Y)]$-algebra generated by three elements 
$\vartheta_{v_1}, \vartheta_{v_2}, \vartheta_{v_3}$ that satisfy the relation
    \[\var_{v_1} \var_{v_2} \var_{v_3} =t^{D_1} \var_{v_1}^2 +t^{D_2} \var_{v_2}^2 +t^{D_3} \var_{v_3}^2 + t^H + t^{2D-H} - 2t^D \,.\]
\end{thm}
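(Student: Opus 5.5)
The plan is to combine Lemmas \ref{Lemma 5.9}, \ref{Lemma 5.10} and \ref{Lemma 5.11} to derive the cubic relation. After that, we argue that $\vartheta_{v_1},\vartheta_{v_2},\vartheta_{v_3}$ generate $R_{(Y,D)}$, and finally that the relation generates the whole ideal of relations.

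\textbf{Deriving the relation.} First, Lemma \ref{Lemma 5.10} gives $\vartheta_{v_1+v_2}=\vartheta_{v_1}\vartheta_{v_2}-t^{D_3}\vartheta_{v_3}$. Multiplying by $\vartheta_{v_3}$ and using Lemma \ref{Lemma 5.11} yields
\[\vartheta_{v_1}\vartheta_{v_2}\vartheta_{v_3}-t^{D_3}\vartheta_{v_3}^2=t^{D_1}\vartheta_{2v_1}+t^{D_2}\vartheta_{2v_2}+t^H+t^{2D-H}+2t^D.\]
Next, Lemma \ref{Lemma 5.9} gives $\vartheta_{2v_1}=\vartheta_{v_1}^2-2t^{D_2+D_3}$ and $\vartheta_{2v_2}=\vartheta_{v_2}^2-2t^{D_1+D_3}$. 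Substituting these, the constant terms become $2t^D-2t^{D}-2t^{D}=-2t^D$, since $D_1+D_2+D_3=D$. This is exactly the stated relation.

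\textbf{Generation.} I would show by induction on $F(p)$ that every $\vartheta_p$, for $p\in B(\mathbb Z)$, is a polynomial in $\vartheta_{v_1},\vartheta_{v_2},\vartheta_{v_3}$ over $\kk[NE(Y)]$.
\begin{itemize}
\item Write $p=av_i+bv_{i+1}$ with $a,b\ge0$ in a cone $\sigma_{i,i+1}$.
\item The product $\vartheta_{v_i}^a\vartheta_{v_{i+1}}^b$ equals $\vartheta_p$ plus terms $\vartheta_r$ with $F(r)<F(p)$. This follows from \cite[Proposition 2.7]{bousseau_skein}: the unbent pair of broken lines contributes exactly $\vartheta_p$ with coefficient $1$, and any bending strictly decreases $F$.
\end{itemize}
This is the standard argument of \cite{GHKScubic}, and it carries over verbatim because the tropicalization is identical.

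\textbf{Completeness of the relation.} For freeness I would compare bases. The quotient $\kk[NE(Y)][\vartheta_{v_1},\vartheta_{v_2},\vartheta_{v_3}]/(\text{relation})$ has the spanning set of monomials $\vartheta_{v_i}^a\vartheta_{v_{i+1}}^b$ with $a,b\ge0$ and $i\in\mathbb Z/3$, together with $\vartheta_{v_j}^c$. These monomials correspond bijectively to $B(\mathbb Z)$. To see that they span the quotient, use the relation to reduce any monomial involving all three generators: the term $\vartheta_{v_1}\vartheta_{v_2}\vartheta_{v_3}$ is the unique term of top $F$-degree $3$ in the relation. The surjection onto $R_{(Y,D)}$ sends this spanning set to a set that is unitriangular, with respect to $F$, relative to the basis $\{\vartheta_p\}$ of $R_{(Y,D)}$. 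Hence the spanning set is a free basis of the quotient, and the surjection is an isomorphism.

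The main obstacle is this last step: making the triangularity precise. Two points need care. First, the filtration by $F$ is compatible on both sides. Second, the leading term of $\vartheta_{v_i}^a\vartheta_{v_{i+1}}^b$ is indeed $\vartheta_{av_i+bv_{i+1}}$ with coefficient $1$. I would handle this exactly as in \cite{GHKScubic}, checking the leading terms with \cite[Proposition 2.7]{bousseau_skein}; the remainder, tracking the lower-order terms in the reduction, is routine bookkeeping.
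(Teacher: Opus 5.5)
Your derivation of the cubic relation is exactly the paper's proof. You rewrite $\vartheta_{v_1}\vartheta_{v_2}\vartheta_{v_3}$ using Lemma \ref{Lemma 5.10}, apply Lemma \ref{Lemma 5.11}, and then eliminate $\vartheta_{2v_1},\vartheta_{2v_2}$ via Lemma \ref{Lemma 5.9}, with the constants combining to $-2t^D$.

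The paper stops there and leaves generation and the completeness of the presentation implicit, following \cite{GHKScubic}. Your $F$-filtration argument supplies that part correctly: the monomials not divisible by $\vartheta_{v_1}\vartheta_{v_2}\vartheta_{v_3}$ correspond bijectively to $B(\mathbb{Z})$, since the cones are unimodular, and they map unitriangularly onto the theta basis.
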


\begin{proof}
Using Lemmas \ref{Lemma 5.9}, \ref{Lemma 5.10}, and \ref{Lemma 5.11}, we obtain
\begin{align*}
\var_{v_1}\var_{v_2}\var_{v_3} & =  \left(\vartheta_{v_1+v_2} + t^{D_3} \vartheta_{v_3}\right)  \var_{v_3} \\
& = \var_{v_1+v_2} \var_{v_3} +t^{D_3} \var_{v_3}^2 \\
& = t^{D_1}\vartheta_{2v_1} +  t^{D_2}\vartheta_{2v_2} + t^H + t^{2D-H} + 2t^D + t^{D_3}\var_{v_3}^2 \\
& = t^{D_1}(\vartheta_{v_1}^2 - 2 t^{D_2+D_3}) +  t^{D_2}(\vartheta_{v_2}^2-2t^{D_1+D_3}) + t^H + t^{2D-H} + 2t^D + t^{D_3}\var_{v_3}^2\\
& = t^{D_1}\var_{v_1}^2 + t^{D_2}\var_{v_2}^2 + t^{D_3}\var_{v_3}^2 + t^H + t^{2D-H} -2 t^D \,.
 \end{align*}
\end{proof}

\section{The quantum canonical scattering diagram}
\label{Section 6}

In \S \ref{sec_def_canonical_quantum}, we define the quantum canonical scattering diagram for orbifold cubic surfaces. The main result of this section is the explicit calculation of this canonical scattering diagram in \S\ref{sec_calculation_canonical_ray}-\ref{sec_calculation_canonical_general_ray} 
 by direct evaluation of higher genus orbifold log Gromov--Witten invariants.

\subsection{Definition of the quantum canonical scattering diagram} \label{sec_def_canonical_quantum}

Let $(B,\Sigma)$ be the tropical cubic surface defined in \S\ref{Section 2}. 
Denote $\kk_\hbar:= \kk[\![\hbar]\!]$ and by
$\kk_\hbar[NE(Y)]$
the monoid algebra of $NE(Y)$ with coefficients in $\kk_\hbar$. As in \S\ref{sec_def_canonical},
an ideal $I \subset \kk_\hbar[NE(Y)]$ is called co-Artinian if the quotient $\kk_\hbar[NE(Y)]/I$ is an Artinian $\kk_\hbar$-algebra.

\begin{defn} A \emph{quantum ray} in $(B, \Sigma)$ is a pair $(\mathfrak{d}, \widehat{f}_{\mathfrak{d}})$, where \begin{itemize}
    \item[(i)] $\mathfrak{d} \subset B$ is a ray $\mathbb{R}_{\geq 0}v_\fd$ generated by some primitive $v_{\fd} \in B(\Bbb{Z}) \setminus \{0\}$.
    \item[(ii)] $\widehat{f}_{\mathfrak{d}} \in \kk_\hbar[NE(Y)][\![z^{-v_\fd}]\!]$ such that $\widehat{f}_{\mathfrak{d}} =  1 \operatorname{mod}z^{-v_\fd}$, and for every co-Artinian ideal $I \subset \kk_\hbar[NE(Y)]$, $\widehat{f}_{\mathfrak{d}}\, \mathrm{mod}\, I$ is a finite sum.
\end{itemize}    
\end{defn}
\begin{defn}
    A \emph{quantum scattering diagram} $\widehat{\fD}$ is a collection of rays $(\mathfrak{d}, \widehat{f}_{\mathfrak{d}})$ such that:
   \begin{itemize} 
\item[(i)]    $\Bbb{R}_{\geq 0}v_{\mathfrak{d}} = \Bbb{R}_{\geq 0}v_{\mathfrak{d}'}$ implies $\mathfrak{d}=\mathfrak{d}'$,
\item[(ii)] for every co-Artinian ideal $I \subset \kk_\hbar[NE(Y)]$, there are only finitely many rays $(\mathfrak{d}, \widehat{f}_{\mathfrak{d}})$ such that $\widehat{f}_{\mathfrak{d}} \neq 1\, \mathrm{mod}\, I$.
    \end{itemize}
\end{defn}

Let $(Y,D)$ be an orbifold cubic surface with three nodes. Fix a curve class $\beta \in \operatorname{NE}(Y)$ and an integral point $v \in B(\Bbb{Z})$. 
As in \S\ref{sec_def_canonical}, we consider the moduli space 
$\overline{\mathcal{M}}^{\beta}_{g,v}(Y,D)$
of genus $g$ orbifold basic stable log maps to $(Y,D)$ with one marked point of contact order $v$ along $D$.
It is a proper Deligne--Mumford stack and carries a $g$-dimensional virtual class $[\overline{\mathcal{M}}^{\beta}_{g,v}(Y, D)]^{\mathrm{vir}}$.
As in \cite{bousseau_mirror,  bousseau_vertex,  bousseau_skein}, we define genus $g$ orbifold log Gromov--Witten invariants 
\[ N_{g,v}^\beta := \int_{[\overline{\mathcal{M}}^{\beta}_{g,v}(Y, D)]^{\mathrm{vir}}} (-1)^g \lambda_g \in \mathbb{Q} \,,\]
where $\lambda_g$ is the top Chern class of the Hodge bundle $\pi_\star \omega_\pi$, where $\pi: \mathcal{C} \rightarrow \overline{\mathcal{M}}^{\beta}_{g,v}(Y, D)$
is the universal domain curve, and $\omega_\pi$ is the relative dualizing line bundle.

We can now define the quantum canonical scattering diagram associated with $(Y,D)$ as in \cite{bousseau_mirror}, with the only modification arising from the orbifold setting.

\begin{defn}
Let $(Y,D)$ be an orbifold cubic surface with three nodes. 
    The \emph{quantum canonical scattering diagram} $\widehat{\mathfrak{D}}_{\text{can}}$ of $(Y,D)$ is the collection of rays $(\mathfrak{d}, \widehat{f}_{\mathfrak{d}})$, where $\mathfrak{d} = \Bbb{R}_{\geq 0}v$ with primitive $v \in B(\mathbb{Z})$ and 
    \[\widehat{f}_{\mathfrak{d}} 
    = \operatorname{exp}\left(\sum_{k\geq 1}\sum_{\beta \in NE(Y)} \left( 2 \sin \left( \frac{k \hbar}{2} \right) \right) \sum_{g \geq 0} N_{g, kv}^\beta \hbar^{2g-1} t^{\beta}z^{-kv}\right) \in \kk_\hbar[NE(Y)][\![z^{-v}]\!]\,.\]
\end{defn}

\subsection{The ray $(\rho_1, \widehat{f}_{\rho_1})$ in the quantum canonical scattering diagram}
\label{sec_calculation_canonical_ray}
In this section, we calculate the function $\widehat{f}_{\rho_1}$ associated to the ray $\rho_1=\mathbb{R}_{\geq 0}v_1$ of the quantum canonical 
scattering diagram.

\begin{lemma} \label{lem_line_g}
We have 
\[ \exp \left( \sum_{k \geq 1} \left(
2 \sin \left( \frac{k \hbar}{2} \right)
\right) \sum_{g \geq 0} N_{g, kv_1}^{k L_1^{v_1}} \hbar^{2g-1} t^{k L_1^{v_1}} z^{-kv_1}\right) = 1 + t^{2 L_1^{v_1}} z^{-2v_1} \]
and 
\[ \exp \left( \sum_{k \geq 1} 
\left(
2 \sin \left( \frac{k \hbar}{2} \right)
\right)  \sum_{g \geq 0}N_{g, kv_1}^{k L_2^{v_1}} \hbar^{2g-1}t^{k L_2^{v_1}} z^{-kv_1}\right) = 1 + t^{2 L_2^{v_1}} z^{-2v_1}\,. \]
\end{lemma}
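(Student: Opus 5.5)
The proof will parallel that of Lemma \ref{lem_line}, now keeping track of all genera. Lemmas \ref{lem_classification} and \ref{lem_lines} show that every genus $g$ orbifold stable log map contributing to $N_{g,kv_1}^{kL_1^{v_1}}$ factors through the unique line $L$ of class $L_1^{v_1}$. This remains true in higher genus, because the image of a stable map depends only on its curve class and its tangency condition; the domain genus plays no role. The line $L$ is an orbifold $\mathbb{P}^1$ carrying a single $\mathbb{Z}/2\mathbb{Z}$ point at the node $x_1$, and it meets $D$ transversally at one point of $D_1$. The invariant therefore becomes a local computation on $L$. The relevant obstruction theory has the normal bundle $N_{L|Y}$ as an extra input, and $N_{L|Y}$ is an orbifold line bundle of degree $L^2=-\frac{1}{2}$. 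We then integrate $(-1)^g\lambda_g$ against the resulting virtual class, which produces a relative orbifold Gromov--Witten invariant of the stacky $\mathbb{P}^1$ with one $\mathbb{Z}/2\mathbb{Z}$ point, in the local geometry defined by $N_{L|Y}$, with maximal tangency at a single point.

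The main step is to evaluate these higher genus invariants with the $\lambda_g$ insertion. I would proceed in the order below.
\begin{enumerate}
\item Resolve the node through the minimal resolution $Y''$. Alternatively, invoke the degeneration and orbifold arguments of \cite[\S 5.5]{GPS}, in the higher genus form used in \cite{bousseau_vertex, bousseau_skein}. This identifies the contribution of a line through an $A_1$ point with a $(-1)$-curve in the smooth surface $Y''$ together with the $(-2)$-curve $E_1''$ attached to it. In that smooth picture the two relevant classes are $E_{11}''$ and $E_{11}''+E_1''$, and both push forward to $E_1$.
\item Compute the generating series of $\lambda_g$-invariants for each of these two classes. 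By \cite{bousseau_vertex} (equivalently \cite{bousseau_mirror}), a $(-1)$-curve meeting the boundary transversally contributes the factor $1+t^{\beta}z^{-v}$ after the quantum change of variables $\exp\bigl(\sum 2\sin(k\hbar/2)\hbar^{2g-1}\cdots\bigr)$. The same should hold for its chain with the $(-2)$-curve.
\item Take the orbifold limit, in which the classes of $E_1''$ are set to zero. The expected result is that the odd-$k$ contributions cancel, with the same sign cancellation as in genus zero, and that the even contributions combine into $1+t^{2L_1^{v_1}}z^{-2v_1}$. The consistency check is that this is a $q$-independent factor: its genus zero part must reproduce Lemma \ref{lem_line}.
\end{enumerate}
An alternative to this route is to compute directly on the stacky line $\mathbb{P}^1[2]$ by orbifold localization. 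There the $\lambda_g$ insertion truncates the computation to degree-$0$ Hodge integrals and the rubber contributions, exactly as in the smooth case of \cite[Lemma 4.12]{bousseau_skein}.

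Once the first identity is established, the second follows by the same argument applied to the unique curve of class $L_2^{v_1}$. By Lemma \ref{lem_lines} this curve is also a line through the single node $x_1$, meeting $D_1$ transversally, with the same self-intersection $-\frac{1}{2}$. I expect the main obstacle to be the higher genus orbifold computation, specifically checking that $2\sin(k\hbar/2)\sum_g N_{g,kv_1}^{kL_1^{v_1}}\hbar^{2g-1}$ equals $2\sin(l\hbar)\cdot\frac{(-1)^{l-1}}{l\cdot 2\sin(l\hbar)}$ when $k=2l$ and vanishes when $k$ is odd. In other words, the claim is that all higher genus $\lambda_g$-corrections exactly cancel the sine factor.
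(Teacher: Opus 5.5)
\textbf{There is a genuine gap: the higher genus computation is left open, and your main route would give the wrong answer.}

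Your reduction to the unique line through the node $x_1$ matches the paper. But the actual evaluation of the higher genus $\lambda_g$-invariants, which you flag as the main obstacle, is never supplied. The route you propose for it does not work as stated. In the resolution $Y''$, your step 2 gives the contribution
\[(1+t^{E''_{11}}z^{-v_1})(1+t^{E''_{11}+E''_1}z^{-v_1}).\]
Your step 3 pushes classes forward by $\pi_*$, i.e. sets $t^{E''_1}\mapsto 1$. This turns the product into
\[(1+t^{E_1}z^{-v_1})^2=1+2t^{E_1}z^{-v_1}+t^{2E_1}z^{-2v_1},\]
not $1+t^{2E_1}z^{-2v_1}$. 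Already in genus $0$ with $k=1$ it predicts the value $2$ where the orbifold invariant is $0$, so your own consistency check against Lemma \ref{lem_line} fails.

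Orbifold invariants of $Y$ are not pushforwards of the invariants of its crepant resolution. To get the right answer from $Y''$ you would need a crepant-resolution-type substitution: $t^{E''_1}\mapsto -1$ together with $t^{E''_{11}}\mapsto \pm i\,t^{E_1}$, reflecting $\pi^*E_1=E''_{11}+\tfrac12 E''_1$. With $x=t^{E_1}z^{-v_1}$ this gives $(1\pm ix)(1\mp ix)=1+x^2$. However, such a correspondence for higher genus log invariants with a $\lambda_g$ insertion is not available to invoke, so this route does not close the argument.

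Relatedly, the vanishing for odd $k$ is not a sign cancellation on the orbifold side. There are no twisted markings, so twisted points occur only at nodes, in pairs. Each component contracted to $x_1$ carries an even number of them. A map to $[\mathbb{C}^2/\pm 1]$ has even coarse ramification over the stacky point at untwisted points and odd ramification at twisted points. Together these force the coarse degree $k$ to be even, so the moduli space is empty for odd $k$.

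\textbf{What the paper does instead.} It computes directly on the stacky line. The problem becomes the relative orbifold theory of $\mathbb{P}^1$ with one $\mathbb{Z}/2\mathbb{Z}$ point, relative to a non-stacky point. The paper cites \cite[Lemma 7.2]{bousseau_vertex}, which gives $N_{g,kv_1}^{kL_1^{v_1}}=0$ for $k$ odd and
\[\sum_{g\geq 0}N_{g,2lv_1}^{2lL_1^{v_1}}\hbar^{2g-1}=\frac{(-1)^{l-1}}{l}\,\frac{1}{2\sin(l\hbar)}.\]
Multiplying by $2\sin(l\hbar)$, the exponent collapses to
\[\sum_{l\geq 1}\frac{(-1)^{l-1}}{l}\,t^{2lL_1^{v_1}}z^{-2lv_1}=\log(1+t^{2L_1^{v_1}}z^{-2v_1}).\]
Your alternative, orbifold localization on $\mathbb{P}^1[2]$, is the right kind of argument, and the identity you isolate at the end is exactly the input needed. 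As written, though, the proposal only names it. You need either to carry out that computation or to cite it. The second identity then follows as you describe.
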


\begin{proof}
By Lemmas \ref{lem_classification} and \ref{lem_lines}, every orbifold stable log map contributing to $N_{g, kv_1}^{k L_1^{v_1}}$ factors through the unique line of class $L_1^{v_1}$. This line contains exactly one $\mathbb{Z}/2\mathbb{Z}$ orbifold point and intersects $D$ transversely at a single point. Therefore, the calculation of $N_{g, kv_1}^{k L_1^{v_1}}$ reduces to a calculation in relative orbifold Gromov-Witten theory of the stacky $\mathbb{P}^1$ with a $\mathbb{Z}/2\mathbb{Z}$ orbifold point, and so \cite[Lemma 7.2]{bousseau_vertex} implies that $N_{g, kv_1}^{k L_1^{v_1}}=0$ if $k$ is odd, and
\[ \sum_{g \geq 0 }N_{g, k v_1}^{k L_1^{v_1}} \hbar^{2g-1} = \frac{(-1)^{l-1}}{l} \frac{1}{2\sin (l \hbar)}\] if $k=2l$ is even. 
Therefore, we obtain
\[ \exp \left( \sum_{k \geq 1} \left(
2 \sin \left( \frac{k \hbar}{2} \right)
\right) \sum_{g \geq 0} N_{g, kv_1}^{k L_1^{v_1}} \hbar^{2g-1} t^{k L_1^{v_1}} z^{-kv_1}\right) \] 
\[=\exp \left(  \sum_{l \geq 1}  \frac{(-1)^{l-1}}{l} t^{2l L_1^{v_1}} z^{-2l v_1}\right)
=
1 + t^{2 L_1^{v_1}} z^{-2v_1} \,.\]
The same argument applies for $L_1^{v_1}$ replaced by $L_2^{v_1}$.
\end{proof}

\begin{lemma} \label{lem_conic_g}
We have \begin{equation} \label{eq_proof}
\exp \left( \sum_{l \geq 1} (2\sin(l \hbar)) \sum_{g \geq 0}N_{g, 2lv_1}^{l C^{v_1}} \hbar^{2g-1} t^{l C^{v_1}} z^{-2lv_1}\right) =
\frac{1}{(1 - qt^{C^{v_1}} z^{-2v_1}) (1 - q^{-1}t^{C^{v_1}} z^{-2v_1})}\,, \end{equation}
where $q=e^{i\hbar}$.
\end{lemma}

\begin{proof}
It follows from Lemma \ref{lem_conics} that the orbifold stable log maps contributing to $N_{g, 2lv_1}^{l C^{v_1}}$ do not contain the node $x_1$. Thus, these orbifold log Gromov--Witten invariants can be calculated using the log Calabi--Yau surface with two nodes $x_2$ and $x_3$, obtained from $\mathbb{P}^2$ by blowing-up successively two points instead of three. By \cite[Theorem 3.2]{bousseau_vertex}, these orbifold log Gromov--Witten invariants
are calculated by a quantum scattering diagram in $\mathbb{R}^2$. Specifically, the left-hand side of \eqref{eq_proof} is the function attached to the ray of direction $\mathbb{R}_{\geq 0}(e_1+e_2)$ in the consistent quantum scattering diagram in $\mathbb{R}^2$ with incoming rays 
\[
\left(
\mathbb R_{\leq 0}e_i,\,
1+z^{2e_i}
\right),
\qquad i=1,2\,,
\]
where $e_1, e_2$ form a basis of $\mathbb{Z}^2 \subset \mathbb{R}^2$, with $\det(e_1, e_2)=1$.
On the other hand, the quantum stability scattering diagram of the Kronecker quiver $Q_2$, with two vertices connected by two arrows, has initial rays  
\[
\left(
\mathbb R_{\leq 0}e_i',\,
1+z^{e'_i}
\right),
\qquad i=1,2\,,
\]
with $\det(e_1', e_2')=2$, and the ray of direction $\mathbb{R}_{\geq 0}(e_1'+e_2')$
has attached function 
\[ \frac{1}{(1-(q')^{-\frac{1}{2}} z^{e_1'+e_2'})(1-(q')^{\frac{1}{2}} z^{e_1'+e_2'})}\]
since the Donaldson--Thomas invariant of the quiver with dimension vector $(1,1)$ is $\Omega = -(q')^{1/2}-(q')^{-1/2}$ -- see \cite[Example 6.10]{mandel_positivity}.
The two quantum scattering diagrams are related by the change of lattice and parameters
\[
e_i'\longmapsto 2e_i,
\qquad
\hbar'\longmapsto 2\hbar,
\qquad
q'\longmapsto q^2\,,
\]
and so the result follows.
\end{proof}

\begin{lemma}
	\label{Lemma_ray_quantum}
    The ray $(\rho_1, \widehat{f}_{\rho_1})$ in the quantum canonical scattering diagram $\widehat{\mathfrak{D}}_{\text{can}}$ is given by 
    \[\widehat{f}_{\rho_{1}} = 
    \dfrac{\left(1+t^{2L_1^{v_1}}z^{-2v_1}\right)\left(1+t^{2L_2^{v_1}}z^{-2v_1}\right)}{\left(1-qt^{C^{v_1}}z^{-2v_1}\right)\left(1-q^{-1}t^{C^{v_1}}z^{-2v_1}\right)} \,,\] 
    where $L_1^{v_1} = E_1, L_2^{v_1} =2H- 2E_2-2E_3- E_1$ and $C^{v_1} = 2H - 2E_2 - 2E_3$.
\end{lemma}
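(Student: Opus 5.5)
The plan is to follow the proof of Lemma \ref{Lemma 4.1} verbatim, replacing the genus zero inputs by their higher genus analogues. First, we reduce to the case where $Y$ is general in its deformation class. This uses deformation invariance of the higher genus orbifold log Gromov--Witten invariants $N^\beta_{g,kv_1}$ with $\lambda_g$ insertion: the virtual class and the Hodge class both behave well in families.

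Next, we bound the curve classes that can contribute to $\widehat{f}_{\rho_1}$. Lemma \ref{lem_classification} and Lemma \ref{Lemma 4.3} only concern the image curve $f(C)$ and its class, not the genus of the domain. They therefore apply unchanged to genus $g$ orbifold stable log maps with one point of contact order $kv_1$. Hence the image of every contributing map is one of four curves: the line of class $L_1^{v_1}$, the line of class $L_2^{v_1}$, the line of class $H-E_2-E_3$, or the smooth conic of class $C^{v_1}$ tangent to $D_1$. Because $t^{\beta}$ records the class, the exponent in the definition of $\widehat{f}_{\rho_1}$ splits as a sum over these four curves. The exponential thus factors as a product of four contributions.

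The individual contributions are then read off from earlier results:
\begin{itemize}
\item The two lines through $x_1$ give $1+t^{2L_1^{v_1}}z^{-2v_1}$ and $1+t^{2L_2^{v_1}}z^{-2v_1}$, by Lemma \ref{lem_line_g}.
\item The conic gives $\bigl((1-qt^{C^{v_1}}z^{-2v_1})(1-q^{-1}t^{C^{v_1}}z^{-2v_1})\bigr)^{-1}$, by Lemma \ref{lem_conic_g}. Here we note that $2\sin(k\hbar/2)=2\sin(l\hbar)$ when $k=2l$, so this matches the definition of $\widehat{f}_{\rho_1}$.
\end{itemize}

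The remaining point, and the main obstacle, is the line $M$ of class $H-E_2-E_3$ through the nodes $x_2$ and $x_3$. Here we need $\sum_g N^{k(H-E_2-E_3)}_{g,kv_1}\hbar^{2g-1}$ to receive no contribution from maps factoring through $M$, for every $k$.

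For even $k=2l$, the class $l C^{v_1}$ coincides with $k(H-E_2-E_3)$. We must check that the computation in Lemma \ref{lem_conic_g}, via \cite[Theorem 3.2]{bousseau_vertex} and the two-node surface, already accounts for all maps in this class, including those mapping onto $M$. This is plausible because the scattering computation computes the full invariant. In that case the conic formula is the entire contribution, and no separate vanishing statement is needed for even $k$.

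For odd $k$, the only possible images are multiple covers of $M$. The first approach to try is the genus zero obstruction argument of Lemma \ref{lem_conic}, extended to higher genus. $M$ is a stacky $\mathbb{P}^1$ with two $\mathbb{Z}/2$ points, meeting $D$ transversally once, with normal bundle of degree $0$ in the orbifold sense. We would then:
\begin{itemize}
\item compute the contribution of maps to $M$ by localization, or by degenerating to the normal cone of $M$, so that it becomes a local invariant of the total space of the normal bundle over the orbifold line;
\item show that with the $\lambda_g$ insertion, the relevant obstruction theory has a trivial factor. Concretely, a trivial summand of the normal bundle contributes $\lambda_g$ times a factor with a vanishing Euler class, exactly as in genus $0$.
\end{itemize}

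If this direct vanishing is hard to establish, the fallback is to argue as in Lemma \ref{lem_conic_g}. The local geometry near $M$ is captured by the two-node scattering diagram with incoming rays $1+z^{2e_i}$. Its output ray in direction $e_1+e_2$ contains only even powers of the corresponding monomial, since it is obtained from the Kronecker diagram by the change of lattice $e_i'\mapsto 2e_i$. This forces the odd $k$ invariants to vanish.

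Multiplying the four factors then gives the stated formula for $\widehat{f}_{\rho_1}$.
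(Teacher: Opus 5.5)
Your proposal is correct and follows the paper's proof. The paper reduces to general $Y$, uses Lemmas \ref{lem_classification} and \ref{Lemma 4.3} to restrict to multiples of $L_1^{v_1}$, $L_2^{v_1}$ and $H-E_2-E_3$, and reads off the factors from Lemmas \ref{lem_line_g} and \ref{lem_conic_g}; as you note, the latter already computes the full invariants in the classes $lC^{v_1}$, maps onto the line through $x_2,x_3$ included.

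For odd multiples of $H-E_2-E_3$, which the paper leaves implicit in higher genus, your fallback is the right argument: the two-node diagram only involves monomials in $z^{2e_1},z^{2e_2}$. Your first idea would not work as stated. The normal bundle of that line has degree $0$, but it has nontrivial $\mu_2$-characters at both nodes, so it is not trivial. More simply, odd-degree orbifold stable maps onto that line do not exist at all. Each component's degree is congruent modulo $2$ to its number of twisted points over $x_2$, and all such points are twisted nodes, each counted twice.
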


\begin{proof}
By deformation invariance of orbifold log Gromov--Witten invariants, one can assume that $Y$ is general enough.
By Lemmas \ref{lem_classification}, \ref{Lemma 4.3}, together with Lemmas \ref{lem_lines},\ref{lem_bad_line},\ref{lem_conics}, the only curve classes contributing to $\widehat{f}_{\rho_1}$ are multiples of $L_1^{v_1}$, $L_2^{v_1}$, and $H-E_2-E_3$. The contributions of these curve classes are determined by 
Lemmas \ref{lem_line_g} and \ref{lem_conic_g}.
\end{proof}

\subsection{General rays in the quantum canonical scattering diagram}
\label{sec_calculation_canonical_general_ray}

Recall from \S\ref{Subsection 4.1} that there is a natural action of $PSL(2, \mathbb{Z})$ on $B(\mathbb{Z})$ and $A_1(Y)$.

\begin{lemma}
    For every primitive $v \in B(\Bbb{Z})$, there exists $M \in PSL(2,\Bbb{Z})$, such that, denoting $L_1^{v}=M_\star(L_1^{v_1})$, 
    $L_2^{v}=M_\star(L_2^{v_1})$, $C^{v}=M_\star(C^{v_1})$, the ray $(\fd, \widehat{f}_\fd)$ of 
    the quantum canonical scattering diagram $\widehat{\fD}_{\text{can}}$ with $\fd=\mathbb{R}_{\geq 0} v$ is given by 
    \[\widehat{f}_{\fd} = 
    \dfrac{\left(1+t^{2L_1^{v}}z^{-2v}\right)\left(1+t^{2L_2^{v}}z^{-2v}\right)}{\left(1-qt^{C^{v}}z^{-2v}\right)\left(1-q^{-1}t^{C^{v}}z^{-2v}\right)}  \in \kk_\hbar[NE(Y)] [\![ z^{-v} ]\!]\,,\]
    where $q=e^{i\hbar}$.
\end{lemma}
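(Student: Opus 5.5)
The plan mirrors the classical argument for general rays in \S\ref{Subsection 4.1}, with the genus zero invariants replaced by the higher genus invariants $N_{g,v}^\beta$ carrying the $(-1)^g\lambda_g$ insertion. First, $SL(2,\mathbb{Z})$ acts transitively on primitive vectors of $\mathbb{Z}^2$, so $PSL(2,\mathbb{Z})$ acts transitively on primitive elements of $B(\mathbb{Z})$. Given a primitive $v$, we therefore fix $M \in PSL(2,\mathbb{Z})$, written as a word in $S$ and $T$, with $M(v_1)=v$. Multiples are respected, so $M(kv_1)=kv$ for all $k\geq 1$.

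The key step is the higher genus analogue of Lemmas \ref{Lemma 4.4} and \ref{Lemma 4.5}: for every $g\geq 0$, we want
\[ N_{g,S(v)}^{S_*(\beta)}=N_{g,v}^{\beta} \quad\text{and}\quad N_{g,T(v)}^{T_*(\beta)}=N_{g,v}^{\beta}. \]

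For $S$, the plan is to realize $S$ as the automorphism of the family of orbifold cubic surfaces that cyclically permutes the boundary components $D_i$ and the nodes $x_i$. Deformation invariance of the virtual class then gives the identity, and $\lambda_g$ is pulled back compatibly because the isomorphism of moduli spaces commutes with the universal curves.

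For $T$, the map is induced by the log modification that blows up $D_1\cap D_3$ and then contracts $D_2$. Invariance of log Gromov--Witten invariants under log modifications \cite{Abwise} is a pushforward statement for virtual classes. The class $\lambda_g$ is pulled back from the moduli space of domain curves, so it is compatible with this pushforward via the projection formula; hence the invariance holds with the $\lambda_g$ insertion. Because the orbifold points lie away from $D$, the orbifold structure is untouched by both operations, exactly as in the genus zero case. The remaining bookkeeping, that these maps send $\Gamma$-type data to $\Gamma$-type data and are compatible with composition, follows as in \cite[Lemmas 4.15, 4.16]{bousseau_skein}, which were already stated in the quantum setting there.

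With these invariances, we get $N_{g,kv}^{M_*\beta}=N_{g,kv_1}^{\beta}$ for all $g,k,\beta$. Because $M_*$ is a bijection on curve classes, the generating series defining $\widehat f_{\fd}$ for $\fd=\mathbb{R}_{\geq0}v$ is obtained from $\widehat f_{\rho_1}$ by substituting $t^\beta\mapsto t^{M_*\beta}$ and $z^{-v_1}\mapsto z^{-v}$. The factor $2\sin(k\hbar/2)$ depends only on $k$, so it is unchanged. Applying this substitution to Lemma \ref{Lemma_ray_quantum} gives exactly the stated formula with $L_1^v, L_2^v, C^v$.

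The main obstacle is checking that the log birational invariance underlying Lemma \ref{Lemma 4.5} survives the $\lambda_g$ insertion in the orbifold setting. I would first argue that the modification only changes the target near $D$, where no orbifold structure exists, so \cite{Abwise} applies verbatim. I would then observe that $\lambda_g$ is preserved under the stabilization maps relating the two moduli spaces, since the Hodge bundle is invariant under contracting rational components. This is the same argument used in \cite{bousseau_skein} for smooth cubic surfaces.
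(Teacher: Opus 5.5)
Your proposal is correct and follows the paper's route. The paper's proof combines three ingredients: transitivity of $PSL(2,\mathbb{Z})$ on primitive elements of $B(\mathbb{Z})$, the $S$- and $T$-invariance of the invariants as in \S\ref{Subsection 4.1}, and substitution into the formula for $\widehat{f}_{\rho_1}$. The paper leaves implicit that the $T$-invariance from log modifications survives the $(-1)^g\lambda_g$ insertion, and your stabilization/projection-formula argument is the standard way to justify that step.
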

\begin{proof}
This follows as in \S\ref{Subsection 4.1} from the transitive action of $PSL(2, \mathbb{Z})$ on the primitive elements of $B(\mathbb{Z})$, and from the calculation of $\widehat{f}_{\rho_1}$
in Lemma \ref{Lemma_ray_quantum}.
\end{proof}

\begin{rmk} \label{remark_D11}
After intersecting curve classes with $\sD = 2H-2E_1-2E_2-2E_3$, the function $\widehat{f}_\fd$ becomes
\[  \dfrac{\left(1+t^2 z^{-2v}\right)\left(1+t^{-2} z^{-2v}\right)}{\left(1-q z^{-2v}\right)\left(1-q^{-1} z^{-2v}\right)}  \,,\]
and so $\widehat{\fD}_{\text{can}}$ reduces to the quantum scattering diagram $\fD_{1,1}$ in \cite[\S 3.2]{bousseau_skein}
after the identification $\lambda = t^2$. In \cite{bousseau_skein}, $\fD_{1,1}$ is described as a base change of a scattering diagram
$\fD_{0,4}$ which is realized as the quantum canonical scattering diagram of a smooth cubic surface. The present paper provides a more direct geometric interpretation of 
$\fD_{1,1}$ as the quantum canonical scattering diagram of an orbifold cubic surface.
\end{rmk}

\section{The quantum intrinsic mirror algebra}
\label{Section 7}

In this section, we prove Theorem \ref{thm_main_2}, stated as Theorem \ref{thm_main_intro_2} in the introduction, by explicitly determining the quantum intrinsic mirror algebra of an orbifold cubic surface with three nodes. We begin in \S\ref{sec_quantum_intrinsic} by reviewing the definition of the quantum intrinsic mirror algebra, before carrying out its computation in \S\ref{sec_calculation_quantum}.

\subsection{The quantum intrinsic mirror algebra}
\label{sec_quantum_intrinsic}

We first review the definition of quantum broken lines and quantum theta functions given in \cite{bousseau_vertex, bousseau_skein}.
\begin{defn}
	Let $\widehat{\mathfrak{D}}$ be a quantum scattering diagram on the tropical cubic surface $(B, \Sigma)$. A \emph{quantum broken line} $\gamma$ for $\widehat{\mathfrak{D}}$ with charge $p \in B(\Bbb{Z})\setminus \{0\}$ and endpoint $Q \in B\setminus \{0\}$ is a proper continuous piecewise integral affine map 
	$\gamma : (-\infty, 0 ] \to B$, together with real numbers $t_0=-\infty<t_1<\dots<t_n=0$, and monomials $m_1, \dots, m_n$, satisfying the following properties:
	\begin{itemize}
        \item[(i)] $\gamma(0) = Q \in B$.
		\item[(ii)] For every $1\leq i\leq n$, $\gamma|_{(t_{i-1},t_i]}$ is affine linear and $\gamma((t_{i-1}, t_i])$ is contained in a two-dimensional cone $\sigma_{j,j+1}$ of $\Sigma$. Moreover, 
    $m_i = c_i z^{-p_i}$, with $c_i \in \kk_q[NE(Y)]$ and $p_i = av_j+bv_{j+1} \neq 0$ for $a,b \in \mathbb{Z}$ such that $\gamma'(t)=p_i$ for any $t\in (t_{i-1}, t_i)$.
		\item[(iii)] $m_1=z^{-p}$.
		\item[(iv)] For every $1 \leq i < n$, either $\gamma(t_i)$ belongs to the support of a quantum ray $(\fd, \widehat{f}_{\fd})$ or $\gamma(t_i) \in \rho_j$ for some $j$. 
        
        If $\gamma(t_i)$ belongs to the support of a quantum ray $(\fd, \widehat{f}_{\fd})$, then $\gamma$ passes from one side of $\fd$ to the other. Moreover, writing $\fd=\mathbb{R}_{\geq 0}v$ with primitive $v \in B(\mathbb{Z})$, and $\widehat{f}_{\fd} = \sum_{k \geq 0 }H_{k}z^{-kv} \in \kk_q[NE(Y)][\![z^{-v}]\!]$, there exists a monomial $d_l z^{-lv}$ in the series \[\sum_{l\geq 0} d_{l}z^{-lv} = \prod_{m=0}^{|\det( v, p_i)| -1}\sum_{k \geq 0}(q^{m - \frac{1}{2}(|\det(v, p_i)| -1)})^{k}H_{k}z^{-kv}\]
        such that $c_{i+1}= d_l c_i$ and $p_{i+1}=p_i + l v$.

        If $\gamma(t_i) \in \rho_j$ for some $j$, then $\gamma$ passes from one side of $\rho_j$ to the other. Moreover, 
        writing $\widehat{f}_{\fd} = \sum_{k \geq 0 }H_{k}z^{-kv_j} \in \kk_q[NE(Y)][\![z^{-v_j}]\!]$, there exists a monomial $d_l z^{-lv_j}$ in the series \[\sum_{l\geq 0} d_{l}z^{-lv_j} = \prod_{m=1}^{|\det( v_j, p_i)| }\sum_{k \geq 0} (q^{m - \frac{1}{2}(|\det(v_j, p_i) | -1)})^{k}H_{k}z^{-kv_j}\]
        such that $c_{i+1}= t^{D_j} d_l c_i$ and $p_{i+1}=p_i + l v$.
	\end{itemize}
    We refer to $m_n =c_n z^{-p_n}$ as the \emph{final monomial} of $\gamma$, and denote it as $c(\gamma)z^{s(\gamma)}$, that is, $c(\gamma)=c_n$ and $s(\gamma)=-p_n$.
 \end{defn}

\begin{defn}
	Let $\widehat{\mathfrak{D}}$ be a quantum scattering diagram on the tropical cubic surface 
    $(B, \Sigma)$. Let $p_1,p_2 \in B(\Bbb{Z})\setminus \{0\}$, $p \in B(\Bbb{Z})$ and $Q \in B \setminus \{0\}$. Define the \emph{structure constants}
	\[C^{\widehat{\fD}, p}_{p_1,p_2}(Q)  = \sum_{(\gamma_1 , \gamma_2)} c(\gamma_{1}) c(\gamma_{2})q^{\frac{1}{2}\det ( s(\gamma_{1}), s(\gamma_2))} \in \kk_q[\operatorname{NE}(Y)]\] where the sum is over pairs of quantum broken lines $(\gamma_1, \gamma_2)$ with charges $p_1$, $p_2$, common endpoint $Q$, and final monomials $c(\gamma_{1})z^{s(\gamma_{1})}$ and $c(\gamma_{2})z^{s(\gamma_{2})}$ such that $s(\gamma_1) + s(\gamma_2) = p$.
	We extend this definition to all $p_1,p_2 \in B(\Bbb{Z})$ by $C^{\widehat{\fD}, p}_{0, p_2}(Q) = \delta_{p_2, p}$ and $C^{\widehat{\fD}, p}_{p_1, 0}(Q) = \delta_{p_1, p}$.
\end{defn}

\begin{definition}
	A quantum scattering diagram $\widehat{\mathfrak{D}}$ is \emph{consistent} 
    if the following hold:
	\begin{itemize}
		\item[(i)] For every $p_1, p_2, p \in B(\Bbb{Z})$, the structure constant $C^{\widehat{\fD}, p}_{p_1,p_2}(Q)$ does not depend on $Q$, and is therefore denoted simply as $C^{\widehat{\fD}, p}_{p_1,p_2}$ from now on.
		\item[(ii)] The product on the $\kk_q[\operatorname{NE}(Y)]$-module 
		\[\mathcal{A}_{\widehat{\mathfrak{D}}} = \bigoplus_{p \in B(\Bbb{Z})}\kk_q[\operatorname{NE}(Y)] \widehat{\vartheta}_{p}\]
		defined by
		\[\widehat{\vartheta}_{p_1}\widehat{\vartheta}_{p_2} = \sum_{p \in B(\Bbb{Z})} C^{\widehat{\fD}, p}_{p_1,p_2} \widehat{\vartheta}_{p}\]
		is associative. 
	\end{itemize}
    When this is the case, we refer to $\mathcal{A}_{\widehat{\mathfrak{D}}}$ as the \emph{algebra} of $\widehat{\fD}$.
\end{definition}

\begin{thm} \label{thm_quantum_consistent}
Let $(Y,D)$ be an orbifold cubic surface with three nodes. Then, 
the associated quantum canonical scattering diagram $\widehat{\mathfrak{D}}_{
\text{can}}$ is consistent. 
\end{thm}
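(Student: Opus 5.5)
The plan mirrors the proof of Theorem~\ref{thm_consistent}: reduce consistency of $\widehat{\fD}_{\text{can}}$ to a statement already established in \cite{bousseau_skein}, via the explicit description of its rays. By the lemma of \S\ref{sec_calculation_canonical_general_ray}, every ray of $\widehat{\fD}_{\text{can}}$ is known in closed form, namely
\[\widehat{f}_{\fd} = \frac{\left(1+t^{2L_1^{v}}z^{-2v}\right)\left(1+t^{2L_2^{v}}z^{-2v}\right)}{\left(1-qt^{C^{v}}z^{-2v}\right)\left(1-q^{-1}t^{C^{v}}z^{-2v}\right)},\]
and the rays along $\rho_j$ carry the extra factor $t^{D_j}$ in the definition of quantum broken lines. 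The first step is to compare this with the quantum scattering diagram $\fD_{1,1}$ of \cite[\S 3.2]{bousseau_skein}, as in Remark~\ref{remark_D11}. After pushing curve classes forward along $\beta \mapsto \sD\cdot\beta$ and setting $\lambda=t^2$, the two diagrams coincide. Consistency of $\fD_{1,1}$ is \cite[Theorem 3.17]{bousseau_skein}.

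The main obstacle is that this base change forgets most of $NE(Y)$, and consistency over $\kk_q[\mathbb{Z}]$ does not formally imply consistency over $\kk_q[NE(Y)]$. I would address this by working with the full $\kk_q[NE(Y)]$-valued diagram and exploiting the $PSL(2,\mathbb{Z})$-equivariance of Lemmas~\ref{Lemma 4.4} and \ref{Lemma 4.5}. Concretely, I would show that $\widehat{\fD}_{\text{can}}$ is obtained from the diagram $\fD_{0,4}$ of \cite{bousseau_skein} (with its full curve-class coefficients, realized as the quantum canonical diagram of a smooth cubic surface) by a base change of coefficient rings together with the degree-two cover of tropical cubic surfaces. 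Under this identification, the classes $L_1^v, L_2^v, C^v$ correspond to the monomials attached to the four lines through the relevant direction in the $(0,4)$ picture. Consistency, in the sense of independence of $Q$ and associativity, is preserved by ring base change, and the factorization $\widehat{f}_\fd=(1+\cdots)(1+\cdots)/(1-q\cdots)(1-q^{-1}\cdots)$ matches the pairing of four factors into two.

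As a fallback, I would follow \cite{bousseau_mirror} directly. The first step there is to verify that $\widehat{\fD}_{\text{can}}$ arises from a consistent quantum scattering diagram on a toric model, via the degeneration formula and the higher-genus scattering result \cite[Theorem 3.2]{bousseau_vertex}, which is already used in Lemma~\ref{lem_conic_g}. The orbifold points lie away from $D$, so \cite[\S 5.5]{GPS} allows the degeneration arguments to go through. Independence of $Q$ then follows from consistency of the toric model diagram by the standard broken-line argument. Associativity follows as in \cite{bousseau_mirror} by deforming endpoints generically.

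Finally, I would check the classical limit $q\to1$. It should recover Theorem~\ref{thm_consistent}, which serves as a sanity check on the identifications.
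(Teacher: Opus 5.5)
Your two routes are the two given in the paper's proof. One is to identify $\widehat{\fD}_{\text{can}}$ with $\fD_{1,1}$ via Remark~\ref{remark_D11} and invoke \cite[Theorem 3.17]{bousseau_skein}. The other is to rerun the proof of \cite[Theorem 4.4]{bousseau_mirror} in the orbifold setting. Your fallback is therefore the paper's first argument, and it suffices on its own. For the orbifold adaptation the paper uses \cite[\S 2.5]{bousseau_vertex}, because the invariants here are higher genus with a $\lambda_g$ insertion; \cite[\S 5.5]{GPS} is a genus-zero reference.

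The obstacle you raise for the other route is real. Remark~\ref{remark_D11} only identifies the diagrams after $t^\beta\mapsto t^{\sD\cdot\beta}$, and the paper's second argument passes over this point. Your remedy, however, does not hold up as stated.

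\begin{itemize}
\item There is no degree-two cover involved. The smooth and orbifold cubic surfaces have the same tropicalization $(B,\Sigma)$, and their rays are matched by the same primitive vectors.
\item The ray functions do not match by a plain base change. The smooth-cubic ray functions contain factors linear in $z^{-v}$ coming from lines, whereas here only even powers of $z^{-v}$ appear. To produce $(1+t^{2L_1^{v}}z^{-2v})$ you must send line monomials to twisted values such as $\pm i\,t^{L_1^{v}}$, and the surplus factors must cancel.
\item So at best you have a ring map twisted by a (possibly $q$-dependent) character. You have not shown that such a map exists compatibly on all rays without collapsing $NE(Y)$.
\end{itemize}

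A simpler repair is homogeneity.

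\begin{itemize}
\item Each $\widehat f_\fd$ is a series in monomials $t^\beta z^{-kv}$ with $(\beta\cdot D_j)_j$ given by the contact orders of $kv$. The factors $t^{D_j}$ across $\rho_j$ account for the kinks.
\item Hence $C^{\widehat{\fD},p}_{p_1,p_2}(Q)$, and each side of an associativity identity, is homogeneous for the grading $t^\beta\mapsto(\beta\cdot D_1,\beta\cdot D_2,\beta\cdot D_3)$. The weight depends only on $p_1,p_2,p$; this is the analogue of the torus equivariance in \cite{GHK1}.
\item $D_1,D_2,D_3,\sD$ span $A_1(Y)_{\mathbb{Q}}$ and the intersection pairing is nondegenerate. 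Therefore $t^\beta\mapsto t^{\sD\cdot\beta}$ is injective on each graded piece.
\item Consequently, independence of $Q$ and associativity lift from $\fD_{1,1}$ to $\widehat{\fD}_{\text{can}}$.
\end{itemize}
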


\begin{proof}
We give two ways to deduce consistency. The first is to generalize the proof of \cite[Theorem 4.4]{bousseau_mirror}, adapting the arguments involving log Gromov--Witten invariants to the orbifold setting by using \cite[\S 2.5]{bousseau_vertex}. Alternatively, one can use Remark \ref{remark_D11}, which identifies the quantum canonical scattering diagram with the quantum scattering diagram $\fD_{1,1}$ of \cite{bousseau_skein}; the latter is consistent by \cite[Theorem 3.17]{bousseau_skein}.
\end{proof}

\begin{definition}
    Let $(Y,D)$ be an orbifold cubic surface with three nodes. 
    The \emph{quantum intrinsic mirror algebra} $\widehat{R}_{(Y,D)}$ is the algebra of the associated
    quantum canonical scattering diagram $\widehat{\mathfrak{D}}_{\text{can}}$.  
\end{definition}

\subsection{Calculation of the quantum intrinsic mirror algebra}
\label{sec_calculation_quantum}

We now obtain the quantum analogues of Lemmas \ref{Lemma 5.9}, \ref{Lemma 5.10}, and \ref{Lemma 5.11}.

\begin{lemma}
	\label{Lemma 6.5}
	For every $\{i,j,k\}= \{1,2,3\}$, we have
	\[\widehat{\vartheta}^2_{v_i} = \widehat{\vartheta}_{2v_i} + 2 t^{D_j+D_k} \,.\]
\end{lemma}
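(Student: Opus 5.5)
We follow the proof of Lemma \ref{Lemma 5.9} verbatim at the level of broken line combinatorics, keeping track of the powers of $q$. By the $\mathbb{Z}/3\mathbb{Z}$-symmetry $S$ (which acts on the quantum canonical scattering diagram compatibly with the quantum broken line rules, as in \S\ref{sec_calculation_canonical_general_ray}), it suffices to treat $i=1$. We compute $\widehat{\vartheta}_{v_1}^2$ at a generic endpoint $Q$ using the structure constants $C^{\widehat{\fD},p}_{v_1,v_1}(Q)$. The quantum broken lines have the same underlying tropical shapes as the classical broken lines, since the monomials in the quantum wall-crossing products have the same exponents as in the classical ones. So the function $F$ of \cite[Proposition 2.7]{bousseau_skein} still gives $0\le F(r)\le 2$, and the same case split $F(r)=2,1,0$ applies.

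For $F(r)=2$, both broken lines are unbent straight lines of charge $v_1$ ending at $Q$. Their final exponents are both $v_1$, so $\det(s(\gamma_1),s(\gamma_2))=0$ and the $q$-twist is trivial. This gives coefficient $1$ in front of $\widehat{\vartheta}_{2v_1}$.

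For $F(r)=1$, there is no contribution, as in the classical case. Bending never occurs there, and the configurations that are ruled out in \cite{GHKScubic} are ruled out purely tropically.

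For $F(r)=0$, we have the two configurations of Figures \ref{Figure 3} and \ref{Figure 4}. Each consists of two unbent lines crossing the rays $\rho_2$ and $\rho_3$. We pick up the factors $t^{D_2}$ and $t^{D_3}$ only from the leading term $H_0=1$ of the series in condition (iv), so no $q$-powers arise from the wall-crossing. The final exponents are $s(\gamma_1)=w$ and $s(\gamma_2)=-w$ for some $w$, so $\det(w,-w)=0$ and again the $q$-twist vanishes. Each configuration therefore contributes exactly $t^{D_2+D_3}$, giving $2t^{D_2+D_3}$ in total.

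The main point needing care is the $F(r)=0$ case. We must check two things. First, when a line crosses $\rho_j$ without bending, the chosen monomial is $d_0=\prod H_0=1$, with no residual $q$-factor. Second, the two configurations are genuinely distinct pairs, rather than a single pair counted with a $q^{\pm1/2}$ symmetric split. Both facts follow from the parallelism of the final directions, which kills the determinant. Summing the three cases gives $\widehat{\vartheta}_{v_1}^2=\widehat{\vartheta}_{2v_1}+2t^{D_2+D_3}$.
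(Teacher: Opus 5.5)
Your proposal is correct and follows essentially the same route as the paper: reduce to $i=1$ by the cyclic symmetry, reuse the classical case analysis of Lemma~\ref{Lemma 5.9}, and observe that the twist $q^{\frac{1}{2}\det(s(\gamma_1),s(\gamma_2))}$ is trivial because the final exponents are parallel in both the $\widehat{\vartheta}_{2v_1}$ and the $\widehat{\vartheta}_0$ contributions. Your additional check is a detail the paper leaves implicit: unbent crossings of $\rho_2$ and $\rho_3$ use only the $k=0$ term $d_0=1$, so they contribute $t^{D_j}$ with no power of $q$.
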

\begin{proof}
By the cyclic $\Bbb{Z}/3\Bbb{Z}$ symmetry, it suffices to show the result for $i=1$. We follow the proof of Lemma \ref{Lemma 5.9}, the only difference in the quantum case being the presence of the $q$-factors in the products. 
For the coefficient of $\widehat{\vartheta}_{2v_{1}}$, we have 
 \[c(\gamma_{1}) = 1, \ c(\gamma_{2}) = 1, \ s(\gamma_{1}) = (1,0), \ s(\gamma_{2})  =(1,0),  \ \det( (1,0), (1,0) )  = 0\,,\]
 which implies $q^{\frac{1}{2}\det( (1,0), (1,0)  ) } = q^{0} = 1$. For the coefficient of $\widehat{\vartheta}_{0}=1$, we have 
\[c(\gamma_{1}) = 1, \ c(\gamma_{2}) = 1, \ s(\gamma_{1}) = (1,0), \ s(\gamma_{2})  =(-1,0),  \ \det( (1,0), (-1,0) )  = 0\,,\]
which implies $q^{\frac{1}{2}\det( (1,0), (-1,0)  ) } = q^{0} = 1$.  
	\end{proof}
    
\begin{lemma}
	\[\widehat{\vartheta}_{v_{1}}\widehat{\vartheta}_{v_{2}} = q^{\frac{1}{2}}\widehat{\vartheta}_{v_1 + v_2} + q^{-\frac{1}{2}} t^{D_3} \widehat{\vartheta}_{v_3}\,. \]
	\label{Lemma 7.0}
\end{lemma}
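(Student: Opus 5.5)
The plan is to follow the proof of Lemma \ref{Lemma 5.10} verbatim at the level of broken lines, and to track only the extra $q$-factors. Since the quantum broken lines for $\widehat{\fD}_{\text{can}}$ have the same underlying tropical shapes as the classical ones, the function $F$ bounds $F(r)\le 2$ exactly as before. The reason is that the classical limit of a quantum broken line is a classical broken line, and the $q$-deformed wall-crossing series have the same monomial supports as the classical ones. The case analysis of Lemma \ref{Lemma 5.10} then carries over. The case $F(r)=0$ gives no contribution, since bending in direction $(-2,1)$ is still impossible. The case $F(r)=2$ gives only the unbent pair contributing to $\widehat{\vartheta}_{v_1+v_2}$. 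The case $F(r)=1$ gives only the pair contributing to $\widehat{\vartheta}_{v_3}$, in which one broken line crosses a ray $\rho_j$ without bending.

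The next step is to compute the coefficients. For the $\widehat{\vartheta}_{v_1+v_2}$ term, take $Q$ in the interior of $\sigma_{12}$. Then $\gamma_1$ and $\gamma_2$ are straight with $c(\gamma_1)=c(\gamma_2)=1$ and $s(\gamma_1)=v_1=(1,0)$, $s(\gamma_2)=v_2=(0,1)$. The factor is therefore $q^{\frac12\det((1,0),(0,1))}=q^{\frac12}$.

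For the $\widehat{\vartheta}_{v_3}$ term, choose $Q$ as in Figure \ref{Figure 6}, say in $\sigma_{12}$ near $\rho_1$ (or wherever the classical picture places it). One broken line crosses a ray $\rho_j$ and picks up the coefficient $t^{D_3}$ through the leading term $d_0$ of the wall-crossing product. Taking the constant term $k=0$ in each factor gives $d_0=1$ with no $q$-power, so $c(\gamma)=t^{D_3}$. The final exponents satisfy $s(\gamma_1)+s(\gamma_2)=v_3$, and written in the local chart they are, up to the chosen chart, $(1,0)$ and $(-2,1)$, or $(0,1)$ and $(-1,0)$ in the chart of $\sigma_{23}$. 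I will read off the actual $s(\gamma_i)$ from the classical configuration and check that $\det(s(\gamma_1),s(\gamma_2))=-1$. For example, with $s(\gamma_1)=(0,1)$ and $s(\gamma_2)=(-1,0)$ in a chart where $v_3=(-1,1)$, the determinant is $\det((0,1),(-1,0))=1$. The sign therefore depends on the order and the chart, so care is needed here, and the answer must come out as $q^{-\frac12}$.

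The main obstacle is exactly this sign bookkeeping in the $\widehat{\vartheta}_{v_3}$ contribution. I need to fix a consistent local chart of $B$ near $Q$, including the identification $(x,0)\sim(-x,0)$ if $Q$ lies near the gluing. I also need to make sure the quantum wall-crossing at $\rho_j$ contributes no extra $q$-power at order zero. As a sanity check I will use the expected quantum torus relation $q^{-\frac12}\widehat{\vartheta}_{v_1}\widehat{\vartheta}_{v_2}-q^{\frac12}\widehat{\vartheta}_{v_2}\widehat{\vartheta}_{v_1}=(q^{-1}-q)\widehat{\vartheta}_{v_3}$, up to the $t^{D_3}$ normalization. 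By the symmetric computation, $\widehat{\vartheta}_{v_2}\widehat{\vartheta}_{v_1}$ equals $q^{-\frac12}\widehat{\vartheta}_{v_1+v_2}+q^{\frac12}t^{D_3}\widehat{\vartheta}_{v_3}$, because the determinants flip sign. This forces the exponent $-\frac12$ on the $\widehat{\vartheta}_{v_3}$ term of the stated product, which fixes the orientation convention. Independence of $Q$ is guaranteed by Theorem \ref{thm_quantum_consistent}, so a single convenient choice of $Q$ suffices.
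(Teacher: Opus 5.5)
Your strategy matches the paper's: reuse the broken-line configurations from the proof of Lemma \ref{Lemma 5.10} and attach the factor $q^{\frac{1}{2}\det(s(\gamma_1),s(\gamma_2))}$. Your treatment of the $\widehat{\vartheta}_{v_1+v_2}$ term is exactly the paper's.

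The gap is in the $\widehat{\vartheta}_{v_3}$ term, which is the only nontrivial content of the lemma. You never determine $s(\gamma_1)$ and $s(\gamma_2)$. Instead you fix the sign by demanding the relation $q^{-\frac{1}{2}}\widehat{\vartheta}_{v_1}\widehat{\vartheta}_{v_2}-q^{\frac{1}{2}}\widehat{\vartheta}_{v_2}\widehat{\vartheta}_{v_1}=(q^{-1}-q)\widehat{\vartheta}_{v_3}$. That is circular, because the paper deduces this relation from the present lemma in Theorem \ref{thm_main_2}. Your symmetry argument only shows that the exponents in $\widehat{\vartheta}_{v_2}\widehat{\vartheta}_{v_1}$ are the negatives of those in $\widehat{\vartheta}_{v_1}\widehat{\vartheta}_{v_2}$. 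The opposite choice $q^{+\frac{1}{2}}$ would simply give $\widehat{\vartheta}_{v_1}\widehat{\vartheta}_{v_2}=q\,\widehat{\vartheta}_{v_2}\widehat{\vartheta}_{v_1}$, and nothing in your argument rules this out without assuming the expected answer.

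The ambiguity you worry about is also not real.
\begin{itemize}
\item At $Q\neq 0$, the determinant of two exponents does not depend on the chart. All transition maps of the affine structure on $B\setminus\{0\}$, including the gluing by $-\mathrm{Id}$, lie in $SL(2,\mathbb{Z})$.
\item The order is fixed by the product: $\gamma_1$ has charge $v_1$ and $\gamma_2$ has charge $v_2$.
\item Your example $s(\gamma_1)=(0,1)$, $s(\gamma_2)=(-1,0)$ just swaps the charges. The pair $(1,0)$, $(-2,1)$ is not the contributing configuration.
\end{itemize}

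Concretely, take $Q\in\sigma_{23}$ near $\rho_3$, as in Figure \ref{Figure 6}.
\begin{itemize}
\item $\gamma_2$ is straight, so $s(\gamma_2)=v_2=(0,1)$.
\item $\gamma_1$ comes in from $\sigma_{31}$ parallel to $\rho_1$ and crosses $\rho_3$ without bending, picking up $t^{D_3}$ and $d_0=1$.
\item In the upper half-plane chart on $\sigma_{31}\cup\sigma_{23}$, the side of $\rho_1$ bounding $\sigma_{31}$ is the negative $x$-axis. So $v_1$ is represented there by $(-1,0)$, consistent with $v_1+v_2=v_3$ in that chart since $D_3^2=-1$.
\end{itemize}
Hence $s(\gamma_1)=(-1,0)$, $s(\gamma_1)+s(\gamma_2)=v_3$, and $\det((-1,0),(0,1))=-1$, which gives the factor $q^{-\frac{1}{2}}$. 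This one-line computation is what the paper's proof does, and it is what your proposal is missing.
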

\begin{proof}
	Following the proof of Lemma \ref{Lemma 5.10}, we determine the relevant $q$-factors. For the coefficient of $\widehat{\vartheta}_{v_1+v_2}$, we have 
	\[c(\gamma_{1}) = 1, \ c(\gamma_{2}) = 1, \ s(\gamma_{1}) = (1,0), \ s(\gamma_{2})  =(0,1),  \ \det( (1,0), (0,1) )  = 1\,,\]
	 which implies $q^{\frac{1}{2}\det( (1,0), (0,1)  ) } = q^{\frac{1}{2}}$. For the coefficient of $\widehat{\vartheta}_{v_3}$, we have
		\[c(\gamma_{1}) = 1, \ c(\gamma_{2}) = 1, \ s(\gamma_{1}) = (-1,0), \ s(\gamma_{2})  =(0,1),  \ \det( (-1,0), (0,1) )  = -1\,,\] 
	which implies $q^{\frac{1}{2}\det( (-1,0), (0,1)  ) } = q^{-\frac{1}{2}}$. 
\end{proof}

\begin{lemma}
	$$\widehat{\vartheta}_{v_1 + v_2} \widehat{\vartheta}_{v_3}  = 
    q^{-1} t^{D_1} \widehat{\vartheta}_{2v_1} +  
    q t^{D_2} \widehat{\vartheta}_{2v_2} + t^{H} + t^{2D-H} + (q + q^{-1})t^D \,.$$
	\label{Lemma 7.1}
\end{lemma}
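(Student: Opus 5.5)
The plan is to follow the proof of Lemma \ref{Lemma 5.11} case by case. The set of pairs of broken lines contributing to each $\widehat{\vartheta}_r$ is the same in the quantum and classical settings, because the bounds from $F$ (via \cite[Proposition 2.7]{bousseau_skein}) do not depend on the functions attached to the rays, only on their supports and directions. So the cases $F(r)=3$ and $F(r)=1$ again give no contribution. It remains to recompute, for the surviving configurations, the coefficients $c(\gamma_1)$, $c(\gamma_2)$ and the twisting factor $q^{\frac12\det(s(\gamma_1),s(\gamma_2))}$, using the quantum bending rule and the quantum ray $\widehat f_{\rho_1}$ of Lemma \ref{Lemma_ray_quantum}.

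\textbf{Case $F(r)=2$.} There is no bending, so the only change is the $q$-factor.
\begin{itemize}
\item For $\widehat\vartheta_{2v_1}$, the broken line of charge $v_1+v_2$ crosses $\rho_2$ (picking up $t^{D_2}$, or the appropriate $t^{D_j}$ as in Figure \ref{Figure 7}). Its final exponent, read in a common chart, pairs with the charge-$v_3$ line to give $s(\gamma_1)$, $s(\gamma_2)$ with $\det=-2$. This produces $q^{-1}t^{D_1}\widehat\vartheta_{2v_1}$.
\item The symmetric configuration for $\widehat\vartheta_{2v_2}$ gives $\det=+2$, hence $q\,t^{D_2}\widehat\vartheta_{2v_2}$.
\end{itemize}
I would fix orientation conventions as in Lemma \ref{Lemma 7.0}: write the endpoint $Q$ in a cone, express $v_3=(-1,1)$ and $v_1+v_2=(1,1)$ in the chart of that cone after crossing $\rho_j$, and compute the determinants. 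One sign check: the two contributions must come with opposite powers of $q$ for the final relation in Theorem \ref{thm_main_intro_2} to be consistent.

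\textbf{Case $F(r)=0$.} This is the main step. The configuration is that of Figure \ref{Figure 8}: one broken line bends at $\rho_1$, so that the final exponents are opposite, $s(\gamma_1)=-s(\gamma_2)$. Hence $\det=0$ and there is no twisting factor, and the whole $q$-dependence comes from the bending coefficient.

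The crossing has $|\det(v_1,p_i)|=2$, so by the quantum rule we expand
\[
\prod_{m=1}^{2}\sum_{k\geq0}\bigl(q^{m-\frac12}\bigr)^{k}H_kz^{-kv_1},
\]
where $\widehat f_{\rho_1}=\sum_k H_kz^{-kv_1}$. We extract the coefficient of the required monomial $z^{-2v_1}$, which is quadratic in $z^{-v_1}$. Since $\widehat f_{\rho_1}$ involves only even powers, the factors $(q^{m-\frac12})^{k}$ with $k=2$ are $q^{-1}$... and $q^{1}$... up to the global shift in the convention.

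Rather than rely on the index conventions, I would expect the net effect to be the symmetrized coefficient
\[
\tfrac12\bigl(q^{2\alpha}+q^{-2\alpha}\bigr)H_2,
\]
or more precisely that the product in the definition reproduces the usual quantum path-ordered transformation. The constant $2$ in the classical $H_2$ splits as follows:
\begin{itemize}
\item the $t^{2L_1}$ and $t^{2L_2}$ terms keep coefficient $1$;
\item the $C^{v_1}$ term, coming from $(1-qx)^{-1}(1-q^{-1}x)^{-1}$, has coefficient $q+q^{-1}$ in $\widehat f_{\rho_1}$ itself.
\end{itemize}
So I will check that, with the quantum crossing rule and $\det(v_1,p_i)$ as in Figure \ref{Figure 8}, the coefficient of $z^{-2v_1}$ extracted is exactly the $H_2$ of $\widehat f_{\rho_1}$, not a $q$-shifted version of it. This is the obstacle: it depends on the precise normalization of the product over $m$. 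If the twist turns out nontrivial, I would instead verify the claimed form indirectly, by checking that associativity and the classical limit (Lemma \ref{Lemma 5.11}) force the coefficient to be $t^H+t^{2D-H}+(q+q^{-1})t^D$. For this I would use the $q$-commutation relations derived from Lemma \ref{Lemma 7.0} and bar-invariance under $q\mapsto q^{-1}$ of the $\widehat\vartheta_0$ coefficient. Finally, multiplying by $t^{D_1}$ and simplifying classes as in Lemma \ref{Lemma 5.11} gives $t^{H}+t^{2D-H}+(q+q^{-1})t^{D}$.
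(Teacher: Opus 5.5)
Your route is the same as the paper's: reuse the configurations of Lemma \ref{Lemma 5.11}, then recompute the coefficients and twisting factors.

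The $F(r)=2$ part is essentially right. The determinants $-2$ and $+2$ come from $s(\gamma_1)=(1,1)$ paired with $s(\gamma_2)=(1,-1)$, resp.\ $(-1,1)$. One correction: for $\widehat\vartheta_{2v_1}$ it is the charge-$v_3$ line that crosses $\rho_1$ and picks up $t^{D_1}$, not the charge-$(v_1+v_2)$ line crossing $\rho_2$.

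\textbf{The gap is the $F(r)=0$ term.} This is the only place where $\widehat f_{\rho_1}$ enters, and you leave it unresolved. Your one concrete claim there, $|\det(v_1,p_i)|=2$, is wrong: you are conflating the crossing multiplicity with the degree of the selected monomial $z^{-2v_1}$.

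The bending line is the charge-$v_3$ line, and it must end at $s(\gamma_2)=(-1,-1)=-s(\gamma_1)$. Since the bend multiplies by $z^{-2v_1}$, its exponent on reaching $\rho_1$ is $(-1,-1)+2v_1=(1,-1)$. Hence $|\det(v_1,(1,-1))|=1$. Lemma \ref{Lemma 5.11} already excludes multiplicity $2$: classically it would give the $z^{-2v_1}$-coefficient of $t^{D_1}f_{\rho_1}^2$, which is twice the correct value. In the quantum case, with the symmetric normalization, it would give $(q+q^{-1})$ times the correct value.

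With multiplicity $1$, the product over $m$ has a single factor. That factor is untwisted in the symmetric normalization $q^{m-\frac12(n-1)}$, $m=0,\dots,n-1$, which is the one used for rays. The printed rule for crossing $\rho_j$ runs $m$ from $1$ to $n$ and would give $q^2H_2$. That is not bar-invariant and contradicts the paper's own computation, so it should be read with the symmetric normalization. Hence
\[
c(\gamma_2)=t^{D_1}\bigl(t^{2L_1^{v_1}}+t^{2L_2^{v_1}}+(q+q^{-1})t^{C^{v_1}}\bigr)=t^{H}+t^{2D-H}+(q+q^{-1})t^{D},
\]
where the bracket is the coefficient of $z^{-2v_1}$ in $\widehat f_{\rho_1}$ from Lemma \ref{Lemma_ray_quantum}. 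Together with $\det((1,1),(-1,-1))=0$, this gives the stated term.

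\textbf{Your fallback cannot replace this computation.} The commutation relations coming from Lemma \ref{Lemma 7.0} admit the central element
\[
q^{-\frac12}\widehat\vartheta_{v_1}\widehat\vartheta_{v_2}\widehat\vartheta_{v_3}-q^{-1}t^{D_1}\widehat\vartheta_{v_1}^2-qt^{D_2}\widehat\vartheta_{v_2}^2-q^{-1}t^{D_3}\widehat\vartheta_{v_3}^2 .
\]
Up to rescaling the generators and an additive constant, this is the boundary loop in the skein algebra of the once-punctured torus. Setting it equal to any scalar gives an associative algebra, so associativity places no constraint on the constant term.

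Bar-invariance and the classical limit do not fix it either: $t^{H}+t^{2D-H}+2t^{D}$ is also bar-invariant and has the correct limit as $q\to 1$. The factor $q+q^{-1}$ is genuinely geometric input, namely the higher-genus count of the conic class from Lemma \ref{lem_conic_g}. It has to be extracted from the broken line itself, as above.
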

\begin{proof}
Following the proof of Lemma \ref{Lemma 5.11} and using Lemma \ref{Lemma_ray_quantum} for the function attached to the quantum ray $\rho_1$, we determine the relevant $q$-factors. 
	For the coefficient of  $\widehat{\vartheta}_{2v_{1}}$, we have 
	 	\[c(\gamma_{1}) = 1, \ c(\gamma_{2}) = 1, \ s(\gamma_{1}) = (1,1), \ s(\gamma_{2})  =(1,-1),  \ \det( (1,1), (1,-1) )  = -2\]
	 		 which implies $q^{\frac{1}{2}\det( (1,1), (1,-1)  ) } = q^{-1}$.
	 	For the coefficient of $\widehat{\vartheta}_{2v_{2}}$, we have 
	 \[c(\gamma_{1}) = 1, \ c(\gamma_{2}) = 1, \ s(\gamma_{1}) = (1,1), \ s(\gamma_{2})  =(-1,1),  \ \det( (1,1), (-1,1) )  = 2 \,,\]
	 which implies $q^{\frac{1}{2}\det( (1,1), (-1,1)  ) } = q^{1}$.
	 	 	For the coefficient of $\widehat{\vartheta}_{0}=1$, we have 
	 \begin{align*}c(\gamma_{1}) = 1, \ c(\gamma_{2}) = & t^H + t^{2D-H} + (q + q^{-1})t^D, \  s(\gamma_{1}) = (1,1), \ s(\gamma_{2})  =(-1,-1),  \ \det( (1,1), (-1,-1) )  = 0 \,,
	 \end{align*}
	 which implies $q^{\frac{1}{2}\det( (1,1), (-1,-1)  ) } = q^{0} = 1.$
\end{proof}

Finally, the following Theorem \ref{thm_main_2}, stated as Theorem \ref{thm_main_intro_2} in the introduction, determines explicitly the quantum intrinsic mirror algebra of an orbifold cubic surface with three nodes.

\begin{thm} \label{thm_main_2}
	Let $(Y,D)$ be an orbifold cubic surface over $\kk$, where $Y$ is a projective cubic surface with three nodes and $D \subset Y$ is a triangle of lines avoiding the nodes. 
The quantum intrinsic mirror algebra
$\widehat{R}_{(Y,D)}$
of $(Y,D)$ 
is the associative $\kk_q[\operatorname{NE}(Y)]$-algebra generated by three elements 
$\widehat{\vartheta}_{v_1}, \widehat{\vartheta}_{v_2}, \widehat{\vartheta}_{v_3}$ that satisfy the relations 
	\begin{align*}
		q^{-\frac{1}{2}}\widehat{\vartheta}_{v_{1}}\widehat{\vartheta}_{v_{2}} - q^{\frac{1}{2}}\widehat{\vartheta}_{v_{2}}\widehat{\vartheta}_{v_{1}} & = \left(q^{-1} - q \right) \widehat{\vartheta}_{v_{3}}\,, \\
		q^{-\frac{1}{2}}\widehat{\vartheta}_{v_{2}}\widehat{\vartheta}_{v_{3}} - q^{\frac{1}{2}}\widehat{\vartheta}_{v_{3}}\widehat{\vartheta}_{v_{2}} & = \left(q^{-1} - q \right) \widehat{\vartheta}_{v_{1}}\,, \\
		q^{-\frac{1}{2}}\widehat{\vartheta}_{v_{3}}\widehat{\vartheta}_{v_{1}} - q^{\frac{1}{2}}\widehat{\vartheta}_{v_{1}}\widehat{\vartheta}_{v_{3}} & = \left(q^{-1} - q \right) \widehat{\vartheta}_{v_{2}}\,, \\
	\end{align*}
and
	\begin{align*}q^{-\frac{1}{2}}\widehat{\vartheta}_{v_1}\widehat{\vartheta}_{v_2}
    \widehat{\vartheta}_{v_3} = 
    q^{-1} t^{D_1} \widehat{\vartheta}_{v_1}^2 + q t^{D_2}\widehat{\vartheta}_{v_2}^2 + 
    q^{-1} t^{D_3} \widehat{\vartheta}_{v_3}^2 & + t^{H} + t^{2 D-H} - (q+q^{-1}) t^{D} \,.\end{align*}
\end{thm}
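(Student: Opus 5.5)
The plan mirrors the proof of Theorem \ref{thm_main_1}, now keeping track of the $q$-factors supplied by Lemmas \ref{Lemma 6.5}, \ref{Lemma 7.0} and \ref{Lemma 7.1}. The proof has two parts: the three $q$-commutation relations, and the cubic relation.

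\emph{The $q$-commutation relations.} By Lemma \ref{Lemma 7.0},
\[ \widehat{\vartheta}_{v_1}\widehat{\vartheta}_{v_2} = q^{\frac{1}{2}}\widehat{\vartheta}_{v_1+v_2} + q^{-\frac{1}{2}}t^{D_3}\widehat{\vartheta}_{v_3}. \]
For the reverse product $\widehat{\vartheta}_{v_2}\widehat{\vartheta}_{v_1}$, I reuse the same pairs of broken lines with the roles of $\gamma_1,\gamma_2$ swapped. This changes the sign of $\det(s(\gamma_1),s(\gamma_2))$, so
\[ \widehat{\vartheta}_{v_2}\widehat{\vartheta}_{v_1} = q^{-\frac{1}{2}}\widehat{\vartheta}_{v_1+v_2} + q^{\frac{1}{2}}t^{D_3}\widehat{\vartheta}_{v_3}. \]
Eliminating $\widehat{\vartheta}_{v_1+v_2}$ gives
\[ q^{-\frac{1}{2}}\widehat{\vartheta}_{v_1}\widehat{\vartheta}_{v_2}-q^{\frac{1}{2}}\widehat{\vartheta}_{v_2}\widehat{\vartheta}_{v_1}=(q^{-1}-q)t^{D_3}\widehat{\vartheta}_{v_3}. \]

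I should check the coefficient. The statement has no $t^{D_3}$, so either $t^{D_3}$ is meant to be absorbed by a normalization of the generators, or, more likely, the statement is to be read after rescaling the $\widehat{\vartheta}_{v_i}$. I would verify this against the classical relation. Since the classical limit reproduces the $t^{D_i}$ factors, I expect the intended relation to carry $t^{D_3}$, as in \cite{bousseau_skein}. I would follow the conventions of \cite{bousseau_skein} and record any monomial factors accordingly. The other two relations follow by the cyclic $\mathbb{Z}/3\mathbb{Z}$ symmetry $v_i\mapsto v_{i+1}$, $E_i\mapsto E_{i+1}$, which preserves $\widehat{\fD}_{\text{can}}$ by Lemma \ref{Lemma 4.4} and its quantum analogue.

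\emph{The cubic relation.} Multiplying the expression from Lemma \ref{Lemma 7.0} on the right by $\widehat{\vartheta}_{v_3}$ gives
\[ q^{-\frac{1}{2}}\widehat{\vartheta}_{v_1}\widehat{\vartheta}_{v_2}\widehat{\vartheta}_{v_3} = \widehat{\vartheta}_{v_1+v_2}\widehat{\vartheta}_{v_3}+q^{-1}t^{D_3}\widehat{\vartheta}_{v_3}^2. \]
I then substitute Lemma \ref{Lemma 7.1} for the first term. Next I replace $\widehat{\vartheta}_{2v_1}=\widehat{\vartheta}_{v_1}^2-2t^{D_2+D_3}$ and $\widehat{\vartheta}_{2v_2}=\widehat{\vartheta}_{v_2}^2-2t^{D_1+D_3}$ using Lemma \ref{Lemma 6.5}. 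The constant terms collect to
\[ t^H+t^{2D-H}+(q+q^{-1})t^D-2(q+q^{-1})t^D=t^H+t^{2D-H}-(q+q^{-1})t^D, \]
which is exactly the claimed relation. The associativity needed here is guaranteed by Theorem \ref{thm_quantum_consistent}.

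\emph{Generation and presentation.} It remains to show that $\widehat{\vartheta}_{v_1},\widehat{\vartheta}_{v_2},\widehat{\vartheta}_{v_3}$ generate $\widehat{R}_{(Y,D)}$ and that these relations are a complete presentation. For generation I would use induction on $F(p)$: the leading term of a product of theta functions is $\widehat{\vartheta}_{p_1+p_2}$ up to a power of $q$, with the remaining terms having strictly smaller $F$. For completeness, I would compare the resulting PBW-type basis of ordered monomials, modulo the cubic relation, with the theta basis $\{\widehat{\vartheta}_p\}_{p\in B(\mathbb{Z})}$. Alternatively, I would deduce it by flatness from the classical statement, Theorem \ref{thm_main_1}, since the quantum algebra is a flat $\kk_q$-deformation whose $q\to1$ limit is $R_{(Y,D)}$. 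I expect this completeness step, together with matching the normalization conventions of the commutators, to be the main obstacle; the computational part is routine given the preceding lemmas.
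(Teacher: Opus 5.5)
Your route is the paper's. The $q$-commutators come from Lemma~\ref{Lemma 7.0}, using that swapping the two broken lines flips the sign of the determinant, together with the cyclic symmetry. The cubic relation comes from right-multiplying Lemma~\ref{Lemma 7.0} by $\widehat{\vartheta}_{v_3}$ and substituting Lemmas~\ref{Lemma 7.1} and~\ref{Lemma 6.5}, and your bookkeeping of the constant term is correct.

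\emph{The $t^{D_3}$ discrepancy.} The discrepancy you flagged is real, and you should commit to your version rather than hedge. Lemma~\ref{Lemma 7.0} gives $q^{-\frac{1}{2}}\widehat{\vartheta}_{v_1}\widehat{\vartheta}_{v_2}-q^{\frac{1}{2}}\widehat{\vartheta}_{v_2}\widehat{\vartheta}_{v_1}=(q^{-1}-q)\,t^{D_3}\widehat{\vartheta}_{v_3}$, and cyclically the analogous relations with $t^{D_1}$ and $t^{D_2}$. This is also what the paper's one-line appeal to that lemma yields. Since the $\widehat{\vartheta}_p$ form a $\kk_q[\operatorname{NE}(Y)]$-basis, the printed relations without these factors are false in $\widehat{R}_{(Y,D)}$. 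The factors only become $1$ after the base change $t^{C}\mapsto t^{\sD\cdot C}$ of Corollary~\ref{cor_intro}, because $\sD\cdot D_k=0$.

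\emph{Where to check it.} The relevant check is not the classical limit $q\to 1$, where all commutators vanish, but first order in $\hbar$. With $q=e^{i\hbar}$, your relation gives $[\widehat{\vartheta}_{v_1},\widehat{\vartheta}_{v_2}]=i\hbar\,(\vartheta_{v_1}\vartheta_{v_2}-2t^{D_3}\vartheta_{v_3})+O(\hbar^2)$. The right-hand side is $\partial F/\partial\vartheta_{v_3}$ for the classical cubic $F$ of Theorem~\ref{thm_main_1}, so together with its cyclic analogues it makes $F$ a Casimir. Without $t^{D_3}$, the bracket would not preserve the ideal generated by $F$.

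\emph{Why rescaling does not help.} Rescaling the generators by units of $\kk_q[\operatorname{NE}(Y)]$ leaves $t^{D_3}$ in place. The substitution $\widehat{\vartheta}_{v_i}=t^{H-E_j-E_k}x_i$ does remove all three factors, but it requires localizing. It also turns the cubic relation into $q^{-\frac{1}{2}}x_1x_2x_3=q^{-1}x_1^2+qx_2^2+q^{-1}x_3^2+t^{\sD}+t^{-\sD}-(q+q^{-1})$, which is not the printed one.

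\emph{Generation and presentation.} Your paragraph on this goes beyond the paper: its proof, like that of Theorem~\ref{thm_main_1}, only derives the relations. Your $F$-filtration argument for generation is the natural way to complete it. So is a triangular comparison of $\widehat{\vartheta}_{v_j}^a\widehat{\vartheta}_{v_{j+1}}^b$ with $\widehat{\vartheta}_{av_j+bv_{j+1}}$ to show the relations are complete.
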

\begin{proof}
Lemma \ref{Lemma 7.0} together with the $\Bbb{Z}/3\Bbb{Z}$ symmetry implies the first three commutator relations. 
For the cubic relation, using Lemma \ref{Lemma 7.0}, we have
\begin{align*}
	\widehat{\vartheta}_{v_1}\widehat{\vartheta}_{v_2}\widehat{\vartheta}_{v_3} & = \left( q^{\frac{1}{2}}\widehat{\vartheta}_{v_1 + v_2} + q^{-\frac{1}{2}} t^{D_3} \widehat{\vartheta}_{v_3} \right)\widehat{\vartheta}_{v_3}  = q^{\frac{1}{2}}\widehat{\vartheta}_{v_1 + v_2}\widehat{\vartheta}_{v_3} +  q^{-\frac{1}{2}} t^{D_3} \widehat{\vartheta}_{v_3}^2 \,,
\end{align*}
and then using Lemma \ref{Lemma 7.1}, we obtain
    \begin{align*}
\widehat{\vartheta}_{v_1}\widehat{\vartheta}_{v_2}\widehat{\vartheta}_{v_3}	& = q^{\frac{1}{2}} \left( q^{-1}t^{D_1}\widehat{\vartheta}_{2v_1} +  q t^{D_2} \widehat{\vartheta}_{2v_2} + t^H + t^{2D-H} + (q + q^{-1})t^D\right) + q^{-\frac{1}{2}} t^{D_3}\widehat{\vartheta}_{v_3}^2 \,. 
\end{align*}
Finally, by Lemma \ref{Lemma 6.5}, we have $\widehat{\vartheta}_{2v_1}=\widehat{\vartheta}_{v_1}^2-2 t^{D_2+D_3}$  and $\widehat{\vartheta}_{2v_2}=\widehat{\vartheta}_{v_2}^2-2t^{D_1+D_3}$. Therefore, using $D=D_1+D_2+D_3$, we obtain
\begin{align*} q^{-\frac{1}{2}}\widehat{\vartheta}_{v_1}\widehat{\vartheta}_{v_2}\widehat{\vartheta}_{v_3}	&=  
q^{-1} t^{D_1}(\widehat{\vartheta}_{v_1}^2-2 t^{D_2+D_3}) +  q t^{D_2}(\widehat{\vartheta}_{v_2}^2-2t^{D_1+D_3}) + t^H + t^{2D-H} + (q + q^{-1})t^D+ q^{-1} t^{D_3}\widehat{\vartheta}_{v_3}^2  \\
& = q^{-1} t^{D_1}\widehat{\vartheta}_{v_1}^2 + q t^{D_2}\widehat{\vartheta}_{v_2}^2 
+ q^{-1} t^{D_3}\widehat{\vartheta}_{v_3}^2 + t^{H} + t^{2 D-H} -(q+q^{-1})t^{D} \,, 
\end{align*}
and this concludes the proof.
\end{proof}

\bibliographystyle{plain}
\bibliography{bibliography}

\end{document}